\numberwithin{equation}{section}
\newcounter{dummy} \numberwithin{dummy}{section}
\newtheorem{Teorema}[dummy]{Theorem}
\newtheorem{Lema}[dummy]{Lemma}
\newcommand{\Fc}{\mathcal{F}}
\newcommand{\Sc}{\mathcal{S}}
\newcommand{\Hc}{\mathcal{H}}
\newcommand{\Rc}{\mathcal{R}}
\newcommand{\Tc}{\mathcal{T}}
\newcommand{\N}{\mathbb{N}}
\newcommand{\R}{\mathbb{R}}
\newcommand{\E}{\mathbb{E}}
\newcommand{\Pb}{\mathbb{P}}
\newcommand{\Hg}{\mathfrak{H}}
\newcommand{\Ind}[1]{\mathbbm{1}_{\{#1\}}}
\newcommand{\Indi}[1]{\mathbbm{1}_{#1}}
\newcommand{\Ip}[1]{\left\langle #1 \right\rangle}
\newcommand{\Norm}[1]{\left\lVert#1\right\rVert}
\newcommand{\Abs}[1]{\left|#1\right|}
\begin{document}
\title{Asymptotic properties of the derivative of self-intersection local time of fractional Brownian motion}
\author{ \sc {Arturo Jaramillo and David Nualart}\thanks{D. Nualart is supported by the NSF grant DMS1512891 and the ARO grant FED0070445} 
\\ Department of Mathematics \\The University of Kansas \\ Lawrence, Kansas, 66045}
\date{}
\maketitle
%====
\begin{abstract}
Let $\{B_{t}\}_{t\geq0}$ be a fractional Brownian motion with Hurst parameter $\frac{2}{3}<H<1$. We prove that the approximation of the derivative of self-intersection local time, defined as 
\begin{align*}
\alpha_{\varepsilon}
  &=\int_{0}^{T}\int_{0}^{t}p'_{\varepsilon}(B_{t}-B_{s})\text{d}s\text{d}t,
\end{align*}  
where $p_\varepsilon(x)$ is the heat kernel, satisfies a central limit theorem   when renormalized by $\varepsilon^{\frac{3}{2}-\frac{1}{H}}$. We prove as well that for $q\geq2$, the $q$-th chaotic component of $\alpha_{\varepsilon}$ converges in $L^{2}$ when $\frac{2}{3}<H<\frac{3}{4}$, and satisfies a central limit theorem when renormalized by a multiplicative factor $\varepsilon^{1-\frac{3}{4H}}$ in the case $\frac{3}{4}<H<\frac{4q-3}{4q-2}$.
\end{abstract}
{\small {\it Keywords:}  Fractional Brownian motion, self-intersection local time, Wiener chaos expansion, central limit theorem.}

\noindent
{\small {\it Mathematical Subject Classification}:  60G22, 60F05.}
%=====
\section{Introduction}

Let $B=\{B_{t}\}_{t\geq0}$ be a one-dimensional fractional Brownian motion of Hurst parameter $H\in(0,1)$. Fix $T>0$. 
The self-intersection local time of $B$, formally defined by
\[
I(y):=\int_0^T \int_0^t \delta(B_t-B_s-y) \text{d} s \text{d}t,
\]
was first studied by Rosen in \cite{Ro} in the planar case and it was further investigated using techniques from Malliavin calculus by Hu and Nualart  in  \cite{HuNu}. In particular, in \cite{HuNu} it is proved that for a $d$-dimensional fractional Brownian motion, $I(0)$ exists in $L^2$  whenever the Hurst parameter $H$ satisfies $H<\frac 1d$.

Motivated by spatial integrals with respect to local time, developed by Rogers and Walsh in  \cite{RoWa},    Rosen introduced  in \cite{RDSIL}  a formal derivative of $I(y)$, in the one-dimensional Brownian case, denoted by
\[
\alpha(y):= \frac {\text{d} I}{\text{d}y} (y)=-\int_0^T \int_0^t \delta'(B_t-B_s-y) \text{d} s \text{d}t.
\]
The random variable $\alpha:=\alpha(0)$ is called the derivative of the self-intersection local time at zero, and is equal to the limit in $L^2$ of
\begin{align}
\alpha_{\varepsilon}    \label{eq1:30/092015}
  &:=\int_{0}^{T}\int_{0}^{t}p'_{\varepsilon}(B_{t}-B_{s})\text{d}s\text{d}t,
\end{align}
where $p_\varepsilon(x):=(2\pi \varepsilon )^{-\frac 12} e^{-\frac {x^2}{2\varepsilon}}$.
This random variable was subsequently used by Hu and Nualart \cite{HuNu2}, to study the asymptotic properties of the third spacial moment of the Brownian local time. In \cite{MDILTCHAOS}, Markowsky gave an alternative proof the existence of such limit by using  Wiener chaos expansion.

 Jung and Markowsky extended this result  in  \cite{JMTANAKA} to the case $0<H<\frac{2}{3}$ and conjectured that for the case $H>\frac{2}{3}$, $\varepsilon^{-\gamma(H)}\alpha_{\varepsilon}$ converges in law to a Gaussian distribution for some suitable constant $\gamma(H)>0$, and at the critical point $H=\frac{2}{3}$, the variable $\log(1/\varepsilon)^{-\gamma}\alpha_{\varepsilon}$ converges in law to a Gaussian distribution for some $\gamma>0$. 
	
Let $\mathcal{N}(0,\gamma)$ denote a centered Gaussian random variable with variance $\gamma$. The primary goal of this paper is to analyze the behavior of the law of $\alpha_{\varepsilon}$ as $\varepsilon\rightarrow0$, when $\frac{2}{3}<H<1$. We will prove that when $\frac{2}{3}<H<1$,  
\begin{align*}
\varepsilon^{\frac{3}{2}-\frac{1}{H}}\alpha_{\varepsilon}
  &\stackrel{Law}{\rightarrow}\mathcal{N}(0,\sigma^{2}),~~~~~~\text{when}~~\varepsilon\rightarrow0,
\end{align*}
for some constant $\sigma^{2}$ that will be specified later (see Theorem \ref{theorem:l02limit/02/04}).  Moreover, we will prove that for every $q\geq2$ and $\frac{2}{3}<H<\frac{3}{4}$, $\lim_{\varepsilon\rightarrow0}J_{q}\left[\alpha_{\varepsilon}\right]$ exists in $L^{2}$, where $J_{q}$ denotes the projection on the $q$-th Wiener chaos (see Theorem \ref{theorem:l2limit}), while in the case $\frac{3}{4}<H<\frac{4q-3}{4q-2}$, the chaotic components $J_{q}\left[\alpha_{\varepsilon}\right]$ of $\alpha_{\varepsilon}$ satisfy 
\begin{align*}
\varepsilon^{1-\frac{3}{4H}}J_{q}\left[\alpha_{\varepsilon}\right]
  &\stackrel{Law}{\rightarrow}\mathcal{N}(0,\sigma_{q}^{2}),~~~~~~\text{when}~~\varepsilon\rightarrow0,
\end{align*}
for some constant $\sigma_{q}^{2}$ that will be specified latter (see Theorem \ref{theorem:distributionchaos}). The proof of the central limit theorem for  $\varepsilon^{\frac{3}{2}-\frac{1}{H}}\alpha_{\varepsilon}$ follows easily from estimations of the $L^{2}$-norm of the chaotic components of $\alpha_{\varepsilon}$, while the proof of the central limit theorem for $\varepsilon^{1-\frac{3}{4H}}J_{q}\left[\alpha_{\varepsilon}\right]$ relies on the multivariate version of the fourth moment theorem (see \cite{NP,PTGLVVMSI}), as well as the a continuos version of the Breuer-Major theorem (\cite{BM}) proved in \cite{NNDLTVolt}.
The behavior of $\alpha_\varepsilon$ in the critical case $H=\frac 23$, and the behavior of  $J_q[\alpha_{\varepsilon}]$ in the critical cases $H=\frac 23$, $H=\frac 34$ and $H=\frac{4q-3}{4q-2}$ seems more involved and will not be discussed in this paper.

It is surprising to remark that the limit behavior of the chaotic components of $\alpha_\varepsilon$ is different from that of the whole sequence. This phenomenon was observed, for instance, in the central  limit theorem for the second spatial moment of  Brownian local time  increments (see \cite{DNT}). However, in this case  the limit of the whole sequence is a mixture of Gaussian distributions, whereas in the present paper  the normalization of $\alpha_\varepsilon$ converges to a Gaussian law. In our case, the projection on the first chaos  of $ \alpha_\varepsilon$ is the leading term and is responsible for the Gaussian limit  of the whole sequence.

The paper is organized as follows. In Section \ref{sec:chaos} we present some preliminary results on the fractional Brownian motion and the chaotic decomposition of $\alpha_{\varepsilon}$. In Section \ref{section:variances} we compute the asymptotic behavior of the variances of the  normalizations of the chaotic components of $\alpha_{\varepsilon}$ as $\varepsilon\rightarrow0$. The asymptotic behavior of the law of $\alpha_{\varepsilon}$ and its chaotic components is presented in section \ref{section:limit_theorems}. Finally, some technical lemmas are proved in Section 5.
%%%%%%%%%%%%%%%%%%%%%%%%%%%%%%%%%%%%%%%%%%%%%%%%%%%%%%%%%%%%%%%%%%%%%%%%%%%%%%%%%%%%%%%%%%%%%%%%%%%%%%%%%%0
\section{Preliminaries}\label{sec:chaos}
\subsection{Fractional Brownian motion}
Throughout the paper, $B=\{B_{t}\}_{t\geq0}$ will denote a fractional Brownian motion with Hurst parameter $H\in(0,1)$, defined on a probability space $(\Omega,\Fc,\Pb)$. That is, $B$ is a centered Gaussian process with covariance function 
\begin{align*}
\E\left[B_{t}B_{s}\right]
  &=\frac{1}{2}(t^{2H}+s^{2H}-|t-s|^{2H}).
\end{align*}
We will denote by $\Hg$ the Hilbert space obtained by taking the completion of the space of step functions endowed with the inner product 
\begin{align*}
\Ip{\Indi{[a,b]},\Indi{[c,d]}}_{\Hg}
  &:=  \E\left[(B_{b}-B_{a})(B_{d}-B_{c})\right].
\end{align*}
The mapping $\Indi{[0,t]} \mapsto B_t$ can be extended to a linear isometry between $\Hg$ and a Gaussian subspace of $L^2(\Omega, \Fc, \Pb)$. We will denote by $B(h)$ the image of $h\in \Hg$ by this isometry.
For any integer $q\in\N$, we denote by $\Hg^{\otimes q}$ and $\Hg^{\odot q}$ the $q$th tensor product of $\Hg$, and the $q$th symmetric tensor product of $\Hg$, respectively. The $q$th Wiener chaos	of $L^{2}(\Omega,\Fc,\Pb)$, denoted by $\Hc_{q}$, is the closed subspace of $L^{2}(\Omega,\Fc,\Pb)$ generated by the variables $\{H_{q}(B(h)),~  h\in\Hg,\Norm{h}_{\Hg}=1\}$, where $H_{q}$ is the $q$th Hermite polynomal, defined by 
\begin{align*}
H_{q}(x)
  &:=(-1)^{q}e^{\frac{x^{2}}{2}}\frac{\text{d}^{p}}{\text{d}x^{p}}e^{-\frac{x^{2}}{2}}.
\end{align*}
The mapping $I_{q}(h^{\otimes q})=H_{q}(B(h))$ provides a linear isometry between $\Hg^{\odot q}$ (equipped with the norm $\sqrt{q}!\Norm{\cdot}_{\Hg^{\otimes q}}$) and $\Hc_{q}$ (equipped with the $L^{2}$-norm).
%==================
\subsection{Chaos decomposition for \texorpdfstring{$\alpha_{\varepsilon}$}{Text}}

Proceeding as in \cite{JMTANAKA} (also see \cite{HuNu}), we can determine the chaos decomposition of the random variable $\alpha_{\varepsilon}$ defined in \eqref{eq1:30/092015} as follows.  First we write
\begin{align}\label{eq:alpha}
\alpha_\varepsilon= \int_0^T\int_0^t  \alpha_{\varepsilon,s,t} {\text d} s {\text d}t,
\end{align}
where $\alpha_{\varepsilon,s,t}:= p_{\varepsilon}^{\prime}(B_{t}-B_{s})$. 
 We know that
\begin{align}\label{eq5m:08/08/2015}
\alpha_{\varepsilon,s,t}	
  &= \sum_{q=1}^{\infty}I_{2q-1 }\left(f_{2q-1,\varepsilon,s,t}\right),
\end{align}
where 
\begin{align}\label{eq:kernel_definition}
f_{2q-1,\varepsilon,s,t}(x_{1},\dots,x_{2q-1})
 &:= (-1)^{q}\beta_{q}(\varepsilon+(t-s)^{2H})^{-q-\frac{1}{2}}\prod_{j=1}^{2q-1}\Indi{[s,t]}(x_{j}),
\end{align}
and
\begin{align}\label{betadef}
\beta_{q}
  :=\frac{1}{2^{q-\frac{1}{2}}(q-1)!\sqrt{\pi}}.
\end{align}
As a consequence, the random variable $\alpha_{\varepsilon}$ has 	the chaos decomposition
\begin{align}\label{chaos:alpha}
\alpha_{\varepsilon}
  &=\sum_{q=1}^{\infty}I_{2q-1 }(f_{2q-1,\varepsilon}),
\end{align}
where 
\begin{align}\label{fdef27/01}
f_{2q-1,\varepsilon}(x_{1},\dots,x_{2q-1})
 &:= \int_{\Rc}f_{2q-1,\varepsilon,s,t}(x_{1},\dots,x_{2q-1})\text{d}s\text{d}t,
\end{align}
and 
\begin{align}\label{fdef27asdn/01}
\Rc:=\{(s,t)\in\R_{+}^{2}\ |\ s\leq t\leq T\}.
\end{align}
Let $T,\varepsilon>0$, $\frac{2}{3}<H<1$, and $q\in\N$ be fixed. Our first goal is to find the behavior as $\varepsilon\rightarrow0$ of the variances of $\alpha_{\varepsilon}$ and $I_{2q-1}\left(f_{2q-1,\varepsilon}\right)$. Before addressing this problem, we will  introduce some notation.
  First notice that
\begin{align}
\E\left[I_{2q-1}\left(f_{2q-1,\varepsilon}\right)^{2}\right]
  &= (2q-1)!\Norm{f_{2q-1,\varepsilon}}_{\Hg^{\otimes(2q-1)}}^{2}\nonumber\\
	&= (2q-1)!\Ip{\int_{\Rc}f_{2q-1,\varepsilon,s_{1},t_{1}}\text{d}s_{1}\text{d}t_{1},\int_{\Rc}f_{2q-1,\varepsilon,s_{2},t_{2}}\text{d}s_{2}\text{d}t_{2}}_{\Hg^{\otimes(2q-1)}}\nonumber\\
	&= 2(2q-1)!\int_{\Sc}\Ip{f_{2q-1,\varepsilon,s_{1},t_{1}},f_{2q-1,\varepsilon,s_{2},t_{2}}}_{\Hg^{\otimes(2q-1)}}\text{d}s_{1}\text{d}s_{2}\text{d}t_{1}\text{d}t_{2}\label{eq1a:06/08/2015},
\end{align}
where the set  $\Sc$ is defined by 
\begin{align}
\Sc
  &:=\{(s_{1},s_{2},t_{1},t_{2})\in[0,T]^{4}\ |\ s_{1}\leq t_{1},~~s_{2}\leq t_{2},~~\text{ and }~~s_{1}\leq s_{2}\}\label{eq:1:regions}.
\end{align}
We can write the set $\Sc$ as the union of the sets $\Sc_{1},\Sc_{2}, \Sc_{3}$ defined by 
\begin{align}
\Sc_{1}
  &:=\{(s_{1},s_{2},t_{1},t_{2})\in[0,T]^{4}\ |\ s_{1}\leq s_{2}\leq t_{1}\leq t_{2}\}\label{eq:2:regions},\\
\Sc_{2}
  &:=\{(s_{1},s_{2},t_{1},t_{2})\in[0,T]^{4}\ |\ s_{1}\leq s_{2}\leq t_{2}\leq t_{1}\}\label{eq:3:regions},\\
\Sc_{3}
  &:=\{(s_{1},s_{2},t_{1},t_{2})\in[0,T]^{4}\ |\ s_{1}\leq t_{1}\leq s_{2}\leq t_{2}\}\label{eq:4:regions}.
\end{align}
Then, by \eqref{eq:alpha},
\begin{align}\label{eq1n:06/08/2015}
\E\left[\alpha_{\varepsilon}^{2}\right]
  &=\E\left[\left(\int_{\Rc}\alpha_{\varepsilon, s, t}\text{d}s\text{d}t\right)^{2}\right]\nonumber\\
	&=2\int_{\Sc}\E\left[\alpha_{\varepsilon,s_{1},t_{1}}\alpha_{\varepsilon,s_{2},t_{2}}\right]\text{d}s_{1}\text{d}s_{2}\text{d}t_{1}\text{d}t_{2}\nonumber\\
  &= V_{1}(\varepsilon)+V_{2}(\varepsilon)+V_{3}(\varepsilon),
\end{align}
where 
\begin{align}\label{Vdef}
V_{i}(\varepsilon)
  &:=2\int_{\Sc_{i}}\E\left[\alpha_{\varepsilon,s_{1},t_{1}}\alpha_{\varepsilon,s_{2},t_{2}}\right]\text{d}s_{1}\text{d}s_{2}\text{d}t_{1}\text{d}t_{2},~~~~~~~~~~~i=1,2,3.
\end{align}
 Similarly, from \eqref{fdef27/01} and  \eqref{eq1a:06/08/2015}, taking $\varepsilon=1$, we get  
\begin{align}\label{eq4a:06/08/2015}
\E\left[I_{1}\left(f_{1,\varepsilon}\right)^{2}\right]
  &= V_{1}^{(1)}(\varepsilon)+V_{2}^{(1)}(\varepsilon)+V_{3}^{(1)}(\varepsilon),
\end{align}
where 
\begin{align}\label{Vqdef}
V_{i}^{(1)}(\varepsilon)
  &:=2\int_{\Sc_{i}}\Ip{f_{1,\varepsilon,s_{1},t_{1}},f_{1,\varepsilon,s_{2},t_{2}}}_{\Hg}\text{d}s_{1}\text{d}s_{2}\text{d}t_{1}\text{d}t_{2},~~~~~~~~~~i=1,2,3.
\end{align}
As a consequence of \eqref{eq1n:06/08/2015} and \eqref{eq4a:06/08/2015}, to determine the behavior of the variances of $\alpha_{\varepsilon}$ and $I_{1}\left(f_{1,\varepsilon}\right)$ as $\varepsilon\rightarrow0$, it suffices to determine the behavior of $V_{i}(\varepsilon)$ and $V_{i}^{(1)}(\varepsilon)$ respectively, for $i=1,2,3$.

In order to describe the terms $\Ip{f_{2q-1,\varepsilon,s_{1},t_{1}},f_{2q-1,\varepsilon,s_{2},t_{2}}}_{\Hg^{\otimes(2q-1)}}$, we will introduce the following notation. For every $x,u_{1},u_{2}>0$ define
\begin{align}\label{mudef}
\mu(x,u_{1},u_{2})
  &:= \E\left[B_{u_{1}}(B_{x+u_{2}}-B_{x})\right].
\end{align}
We can easily prove that for every $s_{1},s_{2},t_{1},t_{2}\geq0$, such that $s_{1}\leq t_{1}$, $s_{2}\leq  t_{2}$ and $s_{1}\leq s_{2}$,	 
\begin{align}\label{muprop}
\E\left[(B_{t_{1}}-B_{s_{1}})(B_{t_{2}}-B_{s_{2}})\right]
  &=\mu(s_{2}-s_{1},t_{1}-s_{1},t_{2}-s_{2}).
\end{align}
Using \eqref{eq:kernel_definition} and \eqref{muprop}, for every $0\leq s_{1}\leq t_{1}$, $0\leq s_{2}\leq t_{2}$ such that $s_{1}\leq s_{2}$, we can write
\begin{align*}
\Ip{f_{2q-1,\varepsilon,s_{1},t_{1}},f_{2q-1,\varepsilon,s_{2},t_{2}}}_{\Hg^{\otimes(2q-1)}}
  &=\beta_{q}^{2}(\varepsilon+(t_{1}-s_{1})^{2H})^{-\frac{1}{2}-q}(\varepsilon+(t_{2}-s_{2})^{2H})^{-\frac{1}{2}-q}\\
	&~~\times\Ip{\Indi{[s_{1},t_{1}]}^{\otimes(2q-1)},\Indi{[s_{2},t_{2}]}^{\otimes(2q-1)}}_{\Hg^{\otimes(2q-1)}}\\
	&=\beta_{q}^{2}(\varepsilon+(t_{1}-s_{1})^{2H})^{-\frac{1}{2}-q}(\varepsilon+(t_{2}-s_{2})^{2H})^{-\frac{1}{2}-q}\\
	&~~\times\mu(s_{2}-s_{1},t_{1}-s_{1},t_{2}-s_{2})^{2q-1}.
\end{align*}
Therefore,  
\begin{align}
\Ip{f_{2q-1,\varepsilon,s_{1},t_{1}},f_{2q-1,\varepsilon,s_{2},t_{2}}}_{\Hg^{\otimes(2q-1)}}
  &=\beta_{q}^{2}G_{\varepsilon,s_{2}-s_{1}}^{(q)}(t_{1}-s_{1},t_{2}-s_{2})\label{eq2a:06/08/2015},
\end{align}
where $G_{\varepsilon,x}^{(q)}(u_{1},u_{2})$ is defined by
\begin{align}\label{Gdef}
G_{\varepsilon,x}^{(q)}(u_{1},u_{2})
  &:=\left(\varepsilon+u_{1}^{2H}\right)^{-\frac{1}{2}-q}\left(\varepsilon+u_{2}^{2H}\right)^{-\frac{1}{2}-q}\mu(x,u_{1},u_{2})^{2q-1}.
\end{align}
Next we present some useful properties of the functions $\mu(x,u_{1},u_{2})$ and $G_{\varepsilon,x}^{(q)}(u_{1},u_{2})$. Taking into account that $H>\frac{2}{3}$, we can write the covariance of $B$ as 
\begin{align}\label{covrep}
\E\left[B_{t}B_{s}\right]
  &=H(2H-1)\int_{0}^{t}\int_{0}^{s}\left|v_{1}-v_{2}\right|^{2H-2}\text{d}v_{1}\text{d}v_{2}.
\end{align}
In particular, this leads to 
\begin{align}\label{eq100ma:16/08/2015}
\mu(x,u_{1},u_{2})
  =H(2H-1)\int_{0}^{u_{1}}\int_{x}^{x+u_{2}}\left|v_{2}-v_{1}\right|^{2H-2}\text{d}v_{1}\text{d}v_{2},
\end{align}
which implies 
\begin{align}\label{Ggeq0}
G_{\varepsilon,x}^{(q)}(u_{1},u_{2})\geq0~~~~~~ \text{for every}~~ \varepsilon\geq0.
\end{align} 
Using the chaos decomposition \eqref{eq5m:08/08/2015}, as well as \eqref{eq2a:06/08/2015} and \eqref{Ggeq0}, we can check that for $i=1,2,3$, the terms $V_{i}(\varepsilon), V_{i}^{(1)}(\varepsilon)$, defined by \eqref{Vdef}, \eqref{Vqdef}, satisfy 
\begin{align}\label{eq1ma:26/08/2015}
0\leq V_{i}^{(1)}(\varepsilon)\leq V_{i}(\varepsilon).
\end{align}
Further properties for the function $G_{\varepsilon,x}^{(q)}(u_{1},u_{2})$ are described in the following lemma.
%=======================================================================================
\begin{Lema}\label{remarkG}		
Let $G^{(q)}_{1,x}(u_{1},u_{2})$ be defined by \eqref{Gdef}. There exists a constant $K>0$, depending on $H$ and $q$, such that for all $x>0$, and $0<v_{1}\leq w_{1}$, $0<v_{2}\leq w_{2}$ satisfying $|v_{i}-w_{i}|\leq1$,
\begin{align*}
G_{1,x}^{(q)}(v_{1},v_{2})
  &\leq KG_{1,x}^{(q)}(w_{1},w_{2}).
\end{align*}
\end{Lema}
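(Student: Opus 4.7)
The plan is to decompose the desired inequality into two independent monotonicity/comparison steps, one for the polynomial prefactor and one for the $\mu$-factor, and then multiply the resulting bounds.

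First I would exploit the integral representation \eqref{eq100ma:16/08/2015} to observe that $\mu(x,u_1,u_2)$ is nondecreasing in each of the arguments $u_1,u_2\ge 0$. Indeed, since $\tfrac{2}{3}<H<1$ the prefactor $H(2H-1)$ is positive and the integrand $|v_2-v_1|^{2H-2}$ is nonnegative, so enlarging either rectangle of integration only increases the value. From the hypothesis $0<v_i\le w_i$ this gives
\begin{equation*}
0\le \mu(x,v_1,v_2)\le \mu(x,w_1,w_2),
\end{equation*}
and raising to the power $2q-1\ge 1$ preserves the inequality.

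Next I would handle the two factors $(1+u^{2H})^{-\frac{1}{2}-q}$. The point is that the extra constant $1$ inside $1+u^{2H}$ keeps the comparison valid even when $v_i$ is small. Using $w_i\le v_i+1$, I would prove an elementary inequality of the form
\begin{equation*}
1+w_i^{2H}\;\le\;(1+2^{2H})\bigl(1+v_i^{2H}\bigr),
\end{equation*}
which follows by separating the two cases $v_i\ge 1$ (where $(v_i+1)^{2H}\le 2^{2H}v_i^{2H}$) and $v_i<1$ (where $w_i^{2H}\le 2^{2H}$ and $1+v_i^{2H}\ge 1$). Inverting and raising to the power $\tfrac{1}{2}+q$ yields
\begin{equation*}
(1+v_i^{2H})^{-\frac{1}{2}-q}\le (1+2^{2H})^{\frac{1}{2}+q}\,(1+w_i^{2H})^{-\frac{1}{2}-q}.
\end{equation*}

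Combining the two estimates in the defining formula \eqref{Gdef} produces
\begin{equation*}
G_{1,x}^{(q)}(v_1,v_2)\le (1+2^{2H})^{2q+1}\,G_{1,x}^{(q)}(w_1,w_2),
\end{equation*}
so the conclusion holds with $K=(1+2^{2H})^{2q+1}$. There is no real obstacle here; the only subtlety is recognizing that one must keep the constant $1$ in the bound for $(1+u^{2H})^{-\frac{1}{2}-q}$ in order to absorb the case where $v_i$ is close to zero, because a naive comparison of $v_i^{2H}$ with $w_i^{2H}$ would blow up as $v_i\downarrow 0$.
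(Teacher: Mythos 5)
Your proposal is correct and follows essentially the same route as the paper: both arguments use the integral representation \eqref{eq100ma:16/08/2015} (valid since $H>\tfrac{2}{3}$ makes $H(2H-1)>0$) to get monotonicity of $\mu$ in $u_1,u_2$, and then control the ratio of the prefactors $(1+v_i^{2H})^{-\frac{1}{2}-q}/(1+w_i^{2H})^{-\frac{1}{2}-q}$ via $w_i\le v_i+1$. The only difference is that you make the uniform bound on that ratio explicit (yielding $K=(1+2^{2H})^{2q+1}$) where the paper simply observes it is bounded; this is a presentational refinement, not a different proof.
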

%---------------------------
\begin{proof}
From \eqref{eq100ma:16/08/2015} it follows that 
\begin{align*}
\mu(x,v_{1},v_{2})
  &\leq \mu(x,w_{1},w_{2}).
\end{align*}
As a consequence, 
\begin{align*}
G_{1,x}^{(q)}(v_{1},v_{2})
  &=(1+v_{1}^{2H})^{-\frac{1}{2}-q}(1+v_{2}^{2H})^{-\frac{1}{2}-q}\mu(x,v_{1},v_{2})^{2q-1}\\
	&\leq(1+v_{1}^{2H})^{-\frac{1}{2}-q}(1+v_{2}^{2H})^{-\frac{1}{2}-q}\mu(x,w_{1},w_{2})^{2q-1}\\
	&=G_{1,x}^{(q)}(w_{1},w_{2})\left(\frac{(1+w_{1}^{2H})(1+w_{2}^{2H})}{(1+v_{1}^{2H})(1+v_{2}^{2H})}\right)^{q+\frac{1}{2}}.
\end{align*}
Using condition $|v_{i}-w_{i}|\leq 1$, $i=1,2$, we get 
\begin{align}\label{eq2n:01/11/2015}
G_{1,x}^{(q)}(v_{1},v_{2})
  &\leq G_{1,x}^{(q)}(w_{1},w_{2})\left(\frac{(1+(v_{1}+1)^{2H})(1+(v_{2}+1)^{2H})}{(1+v_{1}^{2H})(1+v_{2}^{2H})}\right)^{q+\frac{1}{2}}.
\end{align}
The second factor in the right-hand side of \eqref{eq2n:01/11/2015} is uniformly bounded for $v_{1},v_{2}\geq0$, which implies the desired result.
\end{proof}

% =====================================================================================================
%  VARIANCE 
% =====================================================================================================
\section{Behavior of the variances of \texorpdfstring{$\alpha_{\varepsilon}$}{TEXT}  and its chaotic components}\label{section:variances}
The behavior of the variance of $\alpha_{\varepsilon}$ is described in the following lemma.
\begin{Lema}\label{theorem:variance}
Let $T>0$ and $\frac{2}{3}<H<1$ be fixed. Then,
\begin{align}
\lim_{\varepsilon\rightarrow0}\varepsilon^{3-\frac{2}{H}}\E\left[\alpha_{\varepsilon}^{2}\right]
  =\sigma^{2}\label{eq1n:20/07/2015},
\end{align} 
where $\sigma^{2}$ is defined by
\begin{align}\label{eq:5:01/04/2015}
\sigma^{2}
  &:=\frac{T^{2H}(2H-1)}{4H\pi}B\left(\frac{1}{H},\frac{3H-2}{2H}\right)^2B(2,2H-1),%\int_{0}^{T}(T-b)b^{2H-2}\text{d}b.
\end{align}
and	 $B(\cdot,\cdot)$ denotes the Beta function. 
\end{Lema}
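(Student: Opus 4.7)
The plan is to use the decomposition $\E[\alpha_\varepsilon^2]=V_1(\varepsilon)+V_2(\varepsilon)+V_3(\varepsilon)$ from \eqref{eq1n:06/08/2015} and argue that the leading asymptotic is contributed entirely by $V_3(\varepsilon)$, the region of disjoint intervals, while $V_1(\varepsilon)+V_2(\varepsilon)=o(\varepsilon^{2/H-3})$. Summing \eqref{eq2a:06/08/2015} over $q$, or equivalently applying the Fourier representation $p_\varepsilon'(x)=\frac{1}{2\pi}\int \ib\xi\,e^{\ib\xi x-\varepsilon\xi^2/2}\text{d}\xi$ together with the joint Gaussianity of $(B_{t_1}-B_{s_1},B_{t_2}-B_{s_2})$, one obtains
\begin{align*}
\E\left[\alpha_{\varepsilon,s_1,t_1}\alpha_{\varepsilon,s_2,t_2}\right]
  = \frac{\mu(s_2-s_1,t_1-s_1,t_2-s_2)}{2\pi\left[(\varepsilon+(t_1-s_1)^{2H})(\varepsilon+(t_2-s_2)^{2H})-\mu^2\right]^{3/2}}.
\end{align*}

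\textbf{Leading term $V_3(\varepsilon)$.} I would parametrize $\Sc_3$ by $(s_1,u_1,x,u_2)$ with $u_i=t_i-s_i\geq 0$, $x=s_2-t_1\geq 0$, and $s_1+u_1+x+u_2\leq T$, and then rescale $u_i=\varepsilon^{1/(2H)}\tilde u_i$. By \eqref{eq100ma:16/08/2015}, for fixed $x>0$ and $\varepsilon\to 0$, $\mu(x,u_1,u_2)\sim H(2H-1)u_1 u_2 x^{2H-2}$; moreover $\varepsilon+u_i^{2H}=\varepsilon(1+\tilde u_i^{2H})$; and since $H>\frac{2}{3}$ gives $2/H>2$, the term $\mu^2=O(\varepsilon^{2/H})$ is negligible compared with $\varepsilon^2$ in the denominator. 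Hence the integrand satisfies pointwise
\begin{align*}
\frac{\mu}{\left[(\varepsilon+u_1^{2H})(\varepsilon+u_2^{2H})-\mu^2\right]^{3/2}}
  \sim \varepsilon^{\frac{1}{H}-3}\,\frac{H(2H-1)\,\tilde u_1 \tilde u_2\, x^{2H-2}}{\left[(1+\tilde u_1^{2H})(1+\tilde u_2^{2H})\right]^{3/2}},
\end{align*}
and combined with $\text{d}u_1\text{d}u_2=\varepsilon^{1/H}\text{d}\tilde u_1\text{d}\tilde u_2$ this produces a global factor $\varepsilon^{2/H-3}$. Applying dominated convergence, with Lemma \ref{remarkG} supplying the majorant (after reducing to $\varepsilon=1$ via the natural scaling of $G^{(1)}$), yields
\begin{align*}
\lim_{\varepsilon\to0}\varepsilon^{3-\frac{2}{H}}V_3(\varepsilon)
  = \frac{H(2H-1)}{\pi}\left(\int_0^\infty\frac{\tilde u\,\text{d}\tilde u}{(1+\tilde u^{2H})^{3/2}}\right)^{\!2}\int_0^T\int_0^{T-s_1}x^{2H-2}\,\text{d}x\,\text{d}s_1.
\end{align*}
The substitution $y=\tilde u^{2H}$ evaluates the inner integral as $\frac{1}{2H}B\!\left(\tfrac{1}{H},\tfrac{3H-2}{2H}\right)$; the spatial double integral equals $\frac{T^{2H}}{2H(2H-1)}=T^{2H}B(2,2H-1)$; and assembling the constants reproduces $\sigma^2$ as in \eqref{eq:5:01/04/2015}.

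\textbf{Main obstacle.} The delicate step is the bound $V_1(\varepsilon)+V_2(\varepsilon)=o(\varepsilon^{2/H-3})$. On $\Sc_1$ and $\Sc_2$ the two intervals $[s_1,t_1]$ and $[s_2,t_2]$ overlap or are nested, so $\mu$ is no longer of the small order $u_1 u_2 x^{2H-2}$ but is instead comparable to $\sqrt{(t_1-s_1)^{2H}(t_2-s_2)^{2H}}$, and the denominator of the Gaussian expectation becomes degenerate; the rescaling used for $V_3$ no longer captures the correct order. I would adopt coordinates adapted to the overlap — on $\Sc_1$, set $a=s_2-s_1$, $b=t_1-s_2$, $c=t_2-t_1$, so that $u_1=a+b$, $u_2=b+c$ — and split into a microscopic regime where $a\vee b\vee c\lesssim\varepsilon^{1/(2H)}$, treated by a rescaling analogous to the one used for $V_3$ but over a thinner volume, and a macroscopic regime in which the strict positivity of $1-\mu^2/\bigl((t_1-s_1)^{2H}(t_2-s_2)^{2H}\bigr)$ away from the diagonal delivers a uniform lower bound on the denominator and hence an integrable singularity. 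Each piece should be smaller than $\varepsilon^{2/H-3}$ by a power $\varepsilon^\gamma$ for some $\gamma>0$, yielding the required negligibility. These technical estimates are the real work behind the lemma and would most likely be delegated to Section 5.
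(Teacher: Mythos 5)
Your architecture is the paper's own: the decomposition $\E[\alpha_\varepsilon^2]=V_1(\varepsilon)+V_2(\varepsilon)+V_3(\varepsilon)$, the Gaussian covariance identity of Lemma \ref{Lemaaucx:1}, and the rescaling $u_i=\varepsilon^{1/(2H)}\tilde u_i$ on $\Sc_3$ with the gap variable left unscaled; your evaluation of $\lim_{\varepsilon\to0}\varepsilon^{3-2/H}V_3(\varepsilon)$ and of the constant $\sigma^2$ agrees exactly with Lemma \ref{Lema1:aux_region3:1/12/08/2015}. (A small point: Lemma \ref{remarkG} does not furnish the dominating function on $\Sc_3$; what is needed is the bound $\mu(x+y,x,z)\le H(2H-1)xzy^{2H-2}$ together with the local nondeterminism of Lemma \ref{Local_non_determinism} to prevent the subtracted $\mu^2$ from degrading the denominator uniformly in $\varepsilon$.)

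The genuine gap is in your plan for $V_1(\varepsilon)+V_2(\varepsilon)=o(\varepsilon^{2/H-3})$: the ``macroscopic'' regime does \emph{not} have an integrable singularity at $\varepsilon=0$ when $H>\frac23$. In your coordinates on $\Sc_1$ ($a=s_2-s_1$, $b=t_1-s_2$, $c=t_2-t_1$), consider $a,c\to0$ with $b$ of order one. This region is not microscopic in your sense (since $b$ is much larger than $\varepsilon^{1/(2H)}$), yet there $\Sigma_{1,2}$ is of order $b^{2H}$ while local nondeterminism gives only $|\Sigma|\geq\delta\left((a+b)^{2H}c^{2H}+(b+c)^{2H}a^{2H}\right)$, which is of order $b^{2H}(a^{2H}+c^{2H})$. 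The $\varepsilon=0$ integrand is therefore bounded below by a constant times $b^{-H}(a\vee c)^{-3H}$, whose integral in $(a,c)$ diverges exactly when $H\geq\frac23$; a similar divergence occurs on $\Sc_2$ as the inner increment $t_2-s_2\to0$. So the macroscopic contribution is not $O(1)$ and the two-regime split as stated does not close. The paper's Lemmas \ref{Lema:aux_region1c} and \ref{Lema1:aux_region2} avoid this by never passing to $\varepsilon=0$: they keep the regularization in the denominator, reduce to bounds of the form $\int b^{-H}(\varepsilon+(ac)^H)^{-3/2}$ and $\int (a+c)^{-(H+1)}b(\varepsilon+b^{2H})^{-3/2}$, and then apply the weighted arithmetic--geometric mean inequality $\gamma\varepsilon+(1-\gamma)(ac)^{H}\geq\varepsilon^{\gamma}(ac)^{(1-\gamma)H}$ to trade a small power of $\varepsilon$ for integrability; this yields $\varepsilon^{3-2/H}V_1(\varepsilon)\leq K\varepsilon^{\frac32-\frac1H-\frac{y}{H}}\to0$ for $0<y<\frac{3H}{2}-1$, and it is precisely the hypothesis $H>\frac23$ that makes this exponent positive. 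Some such interpolation is indispensable to complete your argument.
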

%--------------------------------------------------------------------------
\begin{proof}
From \eqref{eq1n:06/08/2015} we have
\[
\varepsilon^{3-\frac{2}{H}}\E\left[\alpha_{\varepsilon}^{2}\right]
  = \varepsilon^{3-\frac{2}{H}}V_{1}(\varepsilon)+\varepsilon^{3-\frac{2}{H}}V_{2}(\varepsilon)+\varepsilon^{3-\frac{2}{H}}V_{3}(\varepsilon),
\]
where $V_{1}(\varepsilon)$, $V_{2}(\varepsilon)$  and $V_{3}(\varepsilon)$ are defined by \eqref{Vdef}. By Lemmas \ref{Lema:aux_region1c} and \ref{Lema1:aux_region2},  we have $\lim_{\varepsilon\rightarrow0}\varepsilon^{3-\frac{2}{H}}V_{1}(\varepsilon)=0$ and $\varepsilon^{3-\frac{2}{H}}V_{2}(\varepsilon)=0$, respectively. In addition, from Lemma \ref{Lema1:aux_region3:1/12/08/2015} we have 
$\lim_{\varepsilon\rightarrow0}\varepsilon^{3-\frac{2}{H}}V_{3}(\varepsilon)
  =\sigma^{2}$,  where $\sigma^{2}$ is defined by \eqref{eq:5:01/04/2015}.  This completes the proof of equation \eqref{eq1n:20/07/2015}.
\end{proof}
%=====================================================================================================
The behavior of the variance of the first chaotic component of $\alpha_{\varepsilon}$ is described by the following lemma.
\begin{Lema}\label{lemma:varianza1:28/04}	
Let $T>0$ be fixed. Define $f_{1,\varepsilon}$ as in equation \eqref{fdef27/01}. Then, for every $\frac{2}{3}<H<1$, we have
\begin{align}\label{eq87bt1ecgb:16/08/2015}
\lim_{\varepsilon\rightarrow0}\varepsilon^{3-\frac{2}{H}}\E\left[I_{1}\left(f_{1,\varepsilon}\right)^{2}\right]
  &=\sigma^{2},
\end{align}
where $\sigma^{2}$ is given by \eqref{eq:5:01/04/2015}.
\end{Lema}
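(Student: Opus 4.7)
The plan is to exploit the decomposition \eqref{eq4a:06/08/2015} together with the domination \eqref{eq1ma:26/08/2015}. Since the estimates $\varepsilon^{3-2/H}V_{i}(\varepsilon)\to 0$ for $i=1,2$ already underlie Lemma \ref{theorem:variance}, the sandwich $0\leq V_{i}^{(1)}(\varepsilon)\leq V_{i}(\varepsilon)$ immediately gives $\varepsilon^{3-2/H}V_{i}^{(1)}(\varepsilon)\to 0$ for $i=1,2$, and the task reduces to establishing $\lim_{\varepsilon\to 0}\varepsilon^{3-2/H}V_{3}^{(1)}(\varepsilon)=\sigma^{2}$.

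For this I would mirror the computation that proves Lemma \ref{Lema1:aux_region3:1/12/08/2015}, but restricted to the $q=1$ kernel. Parametrize $\Sc_{3}$ by $(s_{1},x,u_{1},u_{2})$ via $s_{2}=s_{1}+x$, $t_{1}=s_{1}+u_{1}$, $t_{2}=s_{1}+x+u_{2}$, and rescale $u_{i}=\varepsilon^{1/(2H)}a_{i}$. Using \eqref{Gdef} and carefully tracking the powers of $\varepsilon$ one obtains
\begin{align*}
\varepsilon^{3-\frac{2}{H}}V_{3}^{(1)}(\varepsilon)=2\beta_{1}^{2}\int_{0}^{T}\!\text{d}s_{1}\!\int_{0}^{T-s_{1}}\!\text{d}x\!\int_{0}^{x\varepsilon^{-\frac{1}{2H}}}\!\text{d}a_{1}\!\int_{0}^{(T-s_{1}-x)\varepsilon^{-\frac{1}{2H}}}\!\text{d}a_{2}\,\frac{\varepsilon^{-1/H}\mu(x,\varepsilon^{\frac{1}{2H}}a_{1},\varepsilon^{\frac{1}{2H}}a_{2})}{(1+a_{1}^{2H})^{3/2}(1+a_{2}^{2H})^{3/2}},
\end{align*}
and the representation \eqref{eq100ma:16/08/2015} shows that, for every fixed $x>0$, the numerator converges to $H(2H-1)a_{1}a_{2}x^{2H-2}$ as $\varepsilon\to 0$.

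The delicate step is dominated convergence, because the sharp pointwise bound $\mu(x,u_{1},u_{2})\leq H(2H-1)u_{1}u_{2}(x-u_{1})^{2H-2}$ becomes singular as $u_{1}\uparrow x$. I would split the $a_{1}$-integration at $a_{1}=\tfrac{x}{2}\varepsilon^{-1/(2H)}$. On $\{u_{1}\leq x/2\}$ the estimate $(x-u_{1})^{2H-2}\leq Cx^{2H-2}$ supplies the $\varepsilon$-free integrable majorant $Ca_{1}a_{2}x^{2H-2}(1+a_{1}^{2H})^{-3/2}(1+a_{2}^{2H})^{-3/2}$, integrable because $H>1/2$ makes $x^{2H-2}$ integrable on $[0,T]$ and $\int_{0}^{\infty}a(1+a^{2H})^{-3/2}\text{d}a$ converges; this delivers the limit via Lebesgue. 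On the complementary region, the cruder bound $\mu\leq Hx^{2H-1}u_{2}$ combined with the tail decay $\int_{M}^{\infty}(1+a^{2H})^{-3/2}\text{d}a\leq CM^{1-3H}$ produces a contribution of order $\varepsilon^{(3H-2)/(2H)}$, which vanishes since $H>2/3$.

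Finally one evaluates the limit: $\int_{0}^{T}\int_{0}^{T-s_{1}}x^{2H-2}\text{d}x\,\text{d}s_{1}=\frac{T^{2H}}{2H(2H-1)}$, the substitution $v=a^{2H}$ gives $\int_{0}^{\infty}\frac{a}{(1+a^{2H})^{3/2}}\text{d}a=\frac{1}{2H}B\big(\tfrac{1}{H},\tfrac{3H-2}{2H}\big)$, and combining these with $\beta_{1}^{2}=\tfrac{1}{2\pi}$ and the identity $B(2,2H-1)=\tfrac{1}{2H(2H-1)}$ reproduces precisely the constant $\sigma^{2}$ of \eqref{eq:5:01/04/2015}.
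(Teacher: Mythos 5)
Your proposal is correct, and its overall architecture coincides with the paper's: decompose $\E[I_{1}(f_{1,\varepsilon})^{2}]=\sum_{i}V_{i}^{(1)}(\varepsilon)$ via \eqref{eq4a:06/08/2015}, dispose of $i=1,2$ through the sandwich \eqref{eq1ma:26/08/2015} and Lemmas \ref{Lema:aux_region1c}--\ref{Lema1:aux_region2}, and compute the limit of $\varepsilon^{3-2/H}V_{3}^{(1)}(\varepsilon)$ by rescaling the two interval lengths by $\varepsilon^{-1/(2H)}$, passing to the limit under the integral, and evaluating the resulting Beta integrals (your constant $\tfrac{T^{2H}}{8\pi H^{2}}B(\tfrac1H,\tfrac{3H-2}{2H})^{2}$ does equal \eqref{eq:5:01/04/2015} since $B(2,2H-1)=\tfrac{1}{2H(2H-1)}$). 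The one genuine divergence from the paper is the parametrization of $\Sc_{3}$ in the last step, and it has real consequences for the dominated convergence argument. The paper (Lemma \ref{Lema2:aux_region3}) uses $(a,b,c)=(t_{1}-s_{1},\,s_{2}-t_{1},\,t_{2}-s_{2})$, so the \emph{gap} $b$ between the two intervals is an unrescaled variable; the representation $\mu(x+y,x,z)=H(2H-1)xz\int_{[0,1]^{2}}(y+xv_{1}+zv_{2})^{2H-2}\text{d}v_{1}\text{d}v_{2}$ then gives the $\varepsilon$-free majorant $Cac\,b^{2H-2}$ at once, with the singularity $b^{2H-2}$ harmlessly integrable at $b=0$. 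You instead keep the offset $x=s_{2}-s_{1}$ fixed and rescale $u_{1}=t_{1}-s_{1}$, so your sharp bound $\mu(x,u_{1},u_{2})\leq H(2H-1)u_{1}u_{2}(x-u_{1})^{2H-2}$ degenerates as $u_{1}\uparrow x$, which is exactly why you need the split at $u_{1}=x/2$. That split does close the gap: on the outer region the bound $\mu(x,u_{1},u_{2})\leq Hx^{2H-1}u_{2}$ (valid for $u_{1}\leq x$ by monotonicity of $\mu$ in $u_{1}$ and subadditivity of $y\mapsto y^{2H-1}$ — worth a line of justification, since the naive bound $\mu\leq Cu_{2}$ \emph{without} the factor $x^{2H-1}$ would leave a non-vanishing $O(1)$ contribution from $x\lesssim\varepsilon^{1/(2H)}$) yields $C\varepsilon^{(3H-2)/(2H)}\int_{0}^{T}x^{-H}\text{d}x\rightarrow0$ as you state. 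So both routes work; the paper's coordinate choice buys a one-line domination, while yours requires the extra two-region estimate but is otherwise equivalent.
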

%=====================================================================================================
\begin{proof}
From \eqref{eq4a:06/08/2015} we have
\[
\varepsilon^{3-\frac{2}{H}}\E\left[I_{1}(f_{1,\varepsilon})^{2}\right]
  = \varepsilon^{3-\frac{2}{H}}V_{1}^{(1)}(\varepsilon)+\varepsilon^{3-\frac{2}{H}}V_{2}^{(1)}(\varepsilon)+\varepsilon^{3-\frac{2}{H}}V_{3}^{(1)}(\varepsilon),
\]
where $V_{1}^{(1)}(\varepsilon)$,   $V_{2}^{(1)}(\varepsilon)$   and $V_{3}^{(1)}(\varepsilon)$ are defined by \eqref{Vqdef}. By Lemmas \ref{Lema:aux_region1c} and \ref{Lema1:aux_region2},  we have $\lim_{\varepsilon\rightarrow0}\varepsilon^{3-\frac{2}{H}}V_{1}(\varepsilon)=0$ and $\varepsilon^{3-\frac{2}{H}}V_{2}(\varepsilon)=0$, respectively.  Consequently, by \eqref{eq1ma:26/08/2015} we get 
$\lim_{\varepsilon\rightarrow0}\varepsilon^{3-\frac{2}{H}}V_{1}^{(1)}(\varepsilon)
  =0$ and $\lim_{\varepsilon\rightarrow0}\varepsilon^{3-\frac{2}{H}}V_{2}^{(1)}(\varepsilon)=0$.
In addition, from Lemma \ref{Lema2:aux_region3},  the term  $V_{3}^{(1)}(\varepsilon)$ satisfies
$\lim_{\varepsilon\rightarrow0}\varepsilon^{3-\frac{2}{H}}V_{3}^{(1)}(\varepsilon)
  =\sigma^{2}$,
where $\sigma^{2}$ is given by \eqref{eq:5:01/04/2015}. This completes the proof of equation  \eqref{eq87bt1ecgb:16/08/2015}.
\end{proof}
%=====================================================================================================
The behavior of the variance of the chaotic components of $\alpha_{\varepsilon}$ of order greater or equal to two and is described by the following lemma.
\begin{Lema}\label{lemma:varianza:28/04}	
Let $T,\varepsilon>0$, $\frac{2}{3}<H<1$ and $q\in\N$, $q\geq2$ be fixed. Define $\beta_{q},f_{2q-1,\varepsilon},$ and  $G_{\varepsilon,x}^{(q)}(u_{1},u_{2})$ by \eqref{betadef}, \eqref{fdef27/01} and \eqref{Gdef} respectively.  Then, 
\begin{enumerate}
%-------------------------------------------------
\item  If $\frac{3}{4}<H<\frac{4q-3}{4q-2}$, 
\begin{align}
\lim_{\varepsilon\rightarrow0}\varepsilon^{2-\frac{3}{2H}}\E\left[I_{2q-1}(f_{2q-1,\varepsilon})^2\right]
   &=\sigma_{q}^{2}\label{eq5:20/07/2015},
\end{align}
where $\sigma_{q}^2$ is a finite constant given by
\begin{align}
\sigma_{q}^{2}
   &:=2(2q-1)!\beta_{q}^{2}T\int_{\R_{+}^{3}}G_{1,x}^{(q)}(u_{1},u_{2})\text{d}x\text{d}u_{1}\text{d}u_{2}\label{eq6:20/07/2015}.
\end{align}
%-------------------------------------------------
\item  In the case $\frac{2}{3}<H<\frac{3}{4}$, 
\begin{align}
\lim_{\varepsilon\rightarrow0}\E\left[I_{2q-1}(f_{2q-1,\varepsilon})^2\right]
   &=\overline{\sigma}_{q}^{2}\label{eq10n:20/07/2015},
\end{align}
where $\overline{\sigma}_{q,d}^2$ is a finite constant given by
\begin{align}
\overline{\sigma}_{q,d}^{2}
   &:=2(2q-1)!\beta_{q}^{2}\int_{\Sc}G_{0,s_{2}-s_{1}}^{(q)}(t_{1}-s_{1},t_{2}-s_{2})\text{d}s_{1}\text{d}s_{2}\text{d}t_{1}\text{d}t_{2},\label{eq11n:20/07/2015}
\end{align}
and $\Sc$ is defined by \eqref{eq:1:regions}.
%-------------------------------------------------
\end{enumerate}
\end{Lema}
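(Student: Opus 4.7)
The plan is to mimic the strategy of Lemmas~\ref{theorem:variance} and~\ref{lemma:varianza1:28/04}. Writing
$\E[I_{2q-1}(f_{2q-1,\varepsilon})^{2}]=(2q-1)!(V_{1}^{(q)}(\varepsilon)+V_{2}^{(q)}(\varepsilon)+V_{3}^{(q)}(\varepsilon))$
where $V_{i}^{(q)}(\varepsilon):=2\beta_{q}^{2}\int_{\Sc_{i}}G_{\varepsilon,s_{2}-s_{1}}^{(q)}(t_{1}-s_{1},t_{2}-s_{2})\text{d}s_{1}\text{d}s_{2}\text{d}t_{1}\text{d}t_{2}$ (as follows from~\eqref{eq1a:06/08/2015} and~\eqref{eq2a:06/08/2015}), I would perform the change of variables $(s_{1},s_{2},t_{1},t_{2})\mapsto(s_{1},x,u_{1},u_{2})$ with $x=s_{2}-s_{1}$, $u_{1}=t_{1}-s_{1}$, $u_{2}=t_{2}-s_{2}$ (unit Jacobian), reducing the integrand to a function of $(x,u_{1},u_{2})$ and leaving $s_{1}$ as a spectator integrated over an interval of length at most $T$.

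\textbf{Part (ii) ($2/3<H<3/4$).} No rescaling in $\varepsilon$ is necessary. Since $(\varepsilon+u_{i}^{2H})^{-1/2-q}\uparrow u_{i}^{-(2q+1)H}$ as $\varepsilon\downarrow 0$ and $\mu\geq 0$ by~\eqref{Ggeq0}, the integrand $G_{\varepsilon,x}^{(q)}(u_{1},u_{2})$ increases monotonically to $G_{0,x}^{(q)}(u_{1},u_{2})$, and monotone convergence delivers the conclusion once I verify $\int_{\Sc}G_{0,s_{2}-s_{1}}^{(q)}(t_{1}-s_{1},t_{2}-s_{2})\text{d}s_{1}\text{d}s_{2}\text{d}t_{1}\text{d}t_{2}<\infty$. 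Because $\mu$ is homogeneous of degree $2H$, the kernel $G_{0,\cdot}^{(q)}$ is homogeneous of degree $-4H$ in $(x,u_{1},u_{2})$, and after integrating out $s_{1}$ (a bounded factor) the near-origin contribution behaves like $\int_{0}^{\delta}r^{2-4H}\text{d}r$, finite precisely when $H<3/4$. The remaining sub-regions are controlled by the Cauchy--Schwarz estimate $|\mu(x,u_{1},u_{2})|\leq u_{1}^{H}u_{2}^{H}$ combined with the weights $u_{i}^{-(2q+1)H}$; the boundedness of the domain $\Sc$ obviates any concern about the regime $x\to\infty$.

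\textbf{Part (i) ($3/4<H<(4q-3)/(4q-2)$).} I would rescale $(x,u_{1},u_{2})=\varepsilon^{1/(2H)}(\tilde{x},\tilde{u}_{1},\tilde{u}_{2})$. The identities $\varepsilon+(\varepsilon^{1/(2H)}\tilde{v})^{2H}=\varepsilon(1+\tilde{v}^{2H})$ and $\mu(\varepsilon^{1/(2H)}\tilde{x},\varepsilon^{1/(2H)}\tilde{u}_{1},\varepsilon^{1/(2H)}\tilde{u}_{2})=\varepsilon\,\mu(\tilde{x},\tilde{u}_{1},\tilde{u}_{2})$ yield $G_{\varepsilon,x}^{(q)}(u_{1},u_{2})=\varepsilon^{-2}G_{1,\tilde{x}}^{(q)}(\tilde{u}_{1},\tilde{u}_{2})$, which together with the Jacobian factor $\varepsilon^{3/(2H)}$ produces
\[
\varepsilon^{2-\frac{3}{2H}}\E\left[I_{2q-1}(f_{2q-1,\varepsilon})^{2}\right]=2(2q-1)!\beta_{q}^{2}\int_{0}^{T}\text{d}s_{1}\int_{D_{\varepsilon}(s_{1})}G_{1,\tilde{x}}^{(q)}(\tilde{u}_{1},\tilde{u}_{2})\text{d}\tilde{x}\text{d}\tilde{u}_{1}\text{d}\tilde{u}_{2},
\]
where $D_{\varepsilon}(s_{1})$ denotes the rescaled integration domain, which increases to $\R_{+}^{3}$ as $\varepsilon\to 0$. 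Dominated convergence with majorant $G_{1,\cdot}^{(q)}$ then yields the limit $\sigma_{q}^{2}=2(2q-1)!\beta_{q}^{2}T\int_{\R_{+}^{3}}G_{1,x}^{(q)}(u_{1},u_{2})\text{d}x\text{d}u_{1}\text{d}u_{2}$.

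\textbf{Main obstacle.} The crux is establishing the integrability $\int_{\R_{+}^{3}}G_{1,x}^{(q)}(u_{1},u_{2})\text{d}x\text{d}u_{1}\text{d}u_{2}<\infty$. This requires on the one hand $H<(4q-3)/(4q-2)$, to tame the tail $x\to\infty$ where the representation~\eqref{eq100ma:16/08/2015} produces $\mu(x,u_{1},u_{2})\sim H(2H-1)u_{1}u_{2}x^{2H-2}$ and hence a decay of order $x^{(2H-2)(2q-1)}$; and on the other hand $H>1/2$, so that the Cauchy--Schwarz bound $\mu\leq u_{1}^{H}u_{2}^{H}$ combined with the weight $(1+u_{i}^{2H})^{-(2q+1)/2}$ yields the integrable tail $u_{i}^{-2H}$ as $u_{i}\to\infty$. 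Combining these estimates into a single uniform integrable majorant of the rescaled integrand that handles all sub-regions of $\R_{+}^{3}\times[0,T]$ simultaneously is the essential technical step.
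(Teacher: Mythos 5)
Your overall skeleton coincides with the paper's: write the second moment as an integral of $G_{\varepsilon,s_{2}-s_{1}}^{(q)}$ over $\Sc$ via \eqref{eq1a:06/08/2015} and \eqref{eq2a:06/08/2015}, apply monotone convergence directly for part (ii), and rescale all time variables by $\varepsilon^{-\frac{1}{2H}}$ (using $\varepsilon+(\varepsilon^{\frac{1}{2H}}\tilde v)^{2H}=\varepsilon(1+\tilde v^{2H})$ and the homogeneity of $\mu$) before passing to the limit for part (i). Up to that point your computation is correct and is essentially the paper's.

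The genuine gap is the step you yourself flag as ``the essential technical step'': the finiteness of $\int_{\R_{+}^{3}}G_{1,x}^{(q)}\,\text{d}x\,\text{d}u_{1}\,\text{d}u_{2}$ and of $\int_{[0,T]^{3}}G_{0,x}^{(q)}\,\text{d}x\,\text{d}u_{1}\,\text{d}u_{2}$. In the paper this is the content of two separate, substantial lemmas (Lemmas \ref{Lema:aux_regionlqg2} and \ref{Lema:aux_regionsqg2s}), and it is where all the work lies; without it neither limit statement is proved. Moreover, the tools you propose for it are not sufficient. The Cauchy--Schwarz bound $\mu(x,u_{1},u_{2})\leq u_{1}^{H}u_{2}^{H}$ only gives $G_{0,x}^{(q)}(u_{1},u_{2})\leq u_{1}^{-2H}u_{2}^{-2H}$, which is \emph{not} integrable near $u_{1}=0$ or $u_{2}=0$ because $2H>\frac{4}{3}>1$; so your claim that ``the remaining sub-regions are controlled by the Cauchy--Schwarz estimate combined with the weights'' fails as stated. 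Likewise, your homogeneity argument for part (ii) reduces the origin singularity to $\int_{0}^{\delta}r^{2-4H}\,\text{d}r$ only \emph{after} one knows the angular integral of $G_{0}^{(q)}$ over the unit sphere is finite, and that angular integrability is exactly the nontrivial point (the sphere meets the sets $\{u_{1}=0\}$ and $\{u_{2}=0\}$ where the Cauchy--Schwarz bound diverges non-integrably). The paper handles this by decomposing $\R_{+}^{3}$ (resp.\ $[0,T]^{3}$) into three regions according to the relative order of $x$, $u_{1}$, $u_{2}$ and using finer estimates on $\mu$: the integral representation $\mu(x+y,x,z)=H(2H-1)xz\int_{[0,1]^{2}}(y+xv_{1}+zv_{2})^{2H-2}\text{d}v_{1}\text{d}v_{2}$, which yields $\mu\leq H(2H-1)xzy^{2H-2}$ and $\mu\leq H(x\wedge z)(x\vee z)^{2H-1}$, together with the device of writing $\mu^{2q-1}=\mu^{3}\cdot\bigl(\mu/\sqrt{(1+u_{1}^{2H})(1+u_{2}^{2H})}\bigr)^{2(q-2)}$ and bounding the second factor by $1$ --- this last step is where the hypothesis $q\geq2$ enters essentially, and it is absent from your outline. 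Until these region-by-region estimates are supplied, the proof is incomplete at its core.
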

%===================================================================================
\begin{proof}
First we prove \eqref{eq5:20/07/2015} in the case $\frac{3}{4}<H<\frac{4q-3}{4q-2}$.  By \eqref{eq1a:06/08/2015} and \eqref{eq2a:06/08/2015}, 
\begin{align*}
\varepsilon^{2-\frac{3}{2H}}\E\left[I_{2q-1}(f_{2q-1,\varepsilon})^2\right]
  &=2(2q-1)!\beta_{q}^{2}\varepsilon^{2-\frac{3}{2H}}\int_{\Sc}G_{\varepsilon,s_{2}-s_{1}}^{(q)}(t_{1}-s_{1},t_{2}-s_{2})\text{d}s_{1}\text{d}s_{2}\text{d}t_{1}\text{d}t_{2},
\end{align*}
where $\Sc$ is defined by \eqref{eq:1:regions}. Therefore, changing the coordinates $(s_{1},s_{2},t_{1},t_{2})$ by $(\varepsilon^{-\frac{1}{2H}}s_{1},x:=\varepsilon^{-\frac{1}{2H}}(s_{2}-s_{1}),u_{1}:=\varepsilon^{-\frac{1}{2H}}(t_{1}-s_{1}),u_{2}:=\varepsilon^{-\frac{1}{2H}}(t_{2}-s_{2}))$, we get 
\begin{align*}
\varepsilon^{2-\frac{3}{2H}}\E\left[I_{2q-1}(f_{2q-1,\varepsilon})^2\right]
	&=2(2q-1)!\beta_{q}^{2}\varepsilon^{\frac{1}{2H}}\int_{\R_{+}^{4}}\Indi{(0,\varepsilon^{-\frac{1}{2H}}T)}(s_{1}+u_{1})\\
	&\times\Indi{(0,\varepsilon^{-\frac{1}{2H}}T)}(s_{1}+x+u_{2})G_{1,x}^{(q)}(u_{1},u_{2})\text{d}s_{1}\text{d}x\text{d}u_{1}\text{d}u_{2}.
\end{align*}
Integrating with respect to the variable $s_{1}$ we get 
\begin{align}\label{eq3n:14/10/2015}
\varepsilon^{2-\frac{3}{2H}}\E\left[I_{2q-1}(f_{2q-1,\varepsilon})^2\right]
	&=2(2q-1)!\beta_{q}^{2}\int_{\R_{+}^{3}}(T-\varepsilon^{\frac{1}{2H}}(u_{1}\vee (x+u_{2})))\Indi{(0,\varepsilon^{-\frac{1}{2H}}T)}(u_{1})\nonumber\\
	&\times\Indi{(0,\varepsilon^{-\frac{1}{2H}}T)}(s_{1}+x+u_{2})G_{1,x}^{(q)}(u_{1},u_{2})\text{d}x\text{d}u_{1}\text{d}u_{2}.
\end{align}
From \eqref{Ggeq0} we deduce that the integrand in the right-hand side of \eqref{eq3n:14/10/2015} is positive and increasing as $\varepsilon$ decreases to zero. Therefore, applying the monotone convergence theorem in relation \eqref{eq3n:14/10/2015} we obtain \eqref{eq5:20/07/2015}. The constant $\sigma_{q}^{2}$ is finite by Lemma \ref{Lema:aux_regionsqg2s}.

To prove relation \eqref{eq10n:20/07/2015}, notice that equations \eqref{eq1a:06/08/2015} and \eqref{eq2a:06/08/2015} imply that  
\begin{align}\label{eq5n:14/10/2015}
\E\left[I_{2q-1}(f_{2q-1,\varepsilon})^2\right]
  &=2(2q-1)!\beta_{q}^{2}\int_{\Sc}G_{\varepsilon,s_{2}-s_{1}}^{(q)}(t_{1}-s_{1},t_{2}-s_{2})\text{d}s_{1}\text{d}s_{2}\text{d}t_{1}\text{d}t_{2}.
\end{align}
Relation \eqref{eq10n:20/07/2015} follows by applying the monotone convergence theorem to \eqref{eq5n:14/10/2015}. To prove that $\overline{\sigma}_{q}$ is finite we change the coordinates $(s_{1},s_{2},t_{1},t_{2})$ by $(s_{1},x:=s_{2}-s_{1},u_{1}:=t_{1}-s_{1},u_{2}:=t_{2}-s_{2})$  in the integral of the right-hand side of \eqref{eq11n:20/07/2015}, to get 
\begin{align*}
\int_{\Sc}G_{0,s_{2}-s_{1}}^{(q)}(t_{1}-s_{1},t_{2}-s_{2})\text{d}s_{1}\text{d}s_{2}\text{d}t_{1}\text{d}t_{2}
  &\leq \int_{[0,T]^{4}}G_{0,x}^{(q)}(u_{1},u_{2})\text{d}s_{1}\text{d}x\text{d}u_{1}\text{d}u_{2}\\
	&= T\int_{[0,T]^{3}}G_{0,x}^{(q)}(u_{1},u_{2})\text{d}x\text{d}u_{1}\text{d}u_{2}.
\end{align*}
The latter integral is finite by Lemma \ref{Lema:aux_regionsqg2s}. Therefore, the constant $\overline{\sigma}_{q}^{2}$ is finite.
\end{proof}
%=====================================================================================================
%  Limit theorem
% =====================================================================================================
\section{Limit behavior of \texorpdfstring{$\alpha_{\varepsilon}$}{TEXT} and \texorpdfstring{$I_{2q-1}\left(f_{2q-1,\varepsilon}\right)$}{TEXT}}\label{section:limit_theorems}
%The main results of this work are presented next. In what follows, for every $\eta>0$, $\mathcal{N}(0,\eta)$ will denote the law of a centered Gaussian random variable with variance $\eta$. 
The next result is a central limit theorem for $\alpha_\varepsilon$ in case $\frac{2}{3}<H<1$.
\begin{Teorema}\label{theorem:l02limit/02/04}
Let $T,\varepsilon>0$ and $\frac{2}{3}<H<1$ be fixed. Then 
\begin{align}\label{eq1:03/05/2015}
\varepsilon^{\frac{3}{2}-\frac{1}{H}}\alpha_{\varepsilon}
  &\stackrel{Law}{\rightarrow}\mathcal{N}(0,\sigma^{2}),~~~~~~\text{when}~~\varepsilon\rightarrow0,
\end{align}
where $\sigma^{2}$ is defined  by \eqref{eq:5:01/04/2015}.
\end{Teorema}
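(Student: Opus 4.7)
The plan is to exploit the chaos decomposition $\alpha_{\varepsilon}=\sum_{q=1}^{\infty}I_{2q-1}(f_{2q-1,\varepsilon})$ established in \eqref{chaos:alpha}, and to show that only the first-chaos term contributes in the limit. Explicitly, I would write
\[
\varepsilon^{\frac{3}{2}-\frac{1}{H}}\alpha_{\varepsilon}
  =\varepsilon^{\frac{3}{2}-\frac{1}{H}}I_{1}(f_{1,\varepsilon})+\varepsilon^{\frac{3}{2}-\frac{1}{H}}R_{\varepsilon},
\quad \text{where}\quad R_{\varepsilon}:=\sum_{q=2}^{\infty}I_{2q-1}(f_{2q-1,\varepsilon}),
\]
and treat the two summands separately.

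For the first summand, the key observation is that $I_{1}(f_{1,\varepsilon})$ is an element of the first Wiener chaos, hence a centered Gaussian random variable. Therefore $\varepsilon^{\frac{3}{2}-\frac{1}{H}}I_{1}(f_{1,\varepsilon})$ is also a centered Gaussian random variable, and its distribution is completely determined by its variance. By Lemma \ref{lemma:varianza1:28/04},
\[
\varepsilon^{3-\frac{2}{H}}\E\!\left[I_{1}(f_{1,\varepsilon})^{2}\right]\longrightarrow\sigma^{2},
\]
so $\varepsilon^{\frac{3}{2}-\frac{1}{H}}I_{1}(f_{1,\varepsilon})\stackrel{\text{Law}}{\longrightarrow}\mathcal{N}(0,\sigma^{2})$.

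For the remainder, I would use the orthogonality of Wiener chaoses of different orders to write
\[
\E[R_{\varepsilon}^{2}]=\sum_{q=2}^{\infty}\E\!\left[I_{2q-1}(f_{2q-1,\varepsilon})^{2}\right]=\E[\alpha_{\varepsilon}^{2}]-\E\!\left[I_{1}(f_{1,\varepsilon})^{2}\right].
\]
Multiplying by $\varepsilon^{3-\frac{2}{H}}$ and invoking Lemmas \ref{theorem:variance} and \ref{lemma:varianza1:28/04}, both terms converge to the same constant $\sigma^{2}$, so
\[
\varepsilon^{3-\frac{2}{H}}\E[R_{\varepsilon}^{2}]\longrightarrow 0,
\]
which means $\varepsilon^{\frac{3}{2}-\frac{1}{H}}R_{\varepsilon}\to 0$ in $L^{2}$, and in particular in probability. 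Finally, Slutsky's theorem combines the two pieces to yield the claimed convergence in law.

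Given the preparatory work collected in Section \ref{section:variances}, this is a very short argument and there is no serious obstacle at this stage: the whole content of the theorem is the matching of the leading-order variances of $\alpha_{\varepsilon}$ and of its first chaotic component, together with the Gaussianity of first chaos elements. The real work, i.e.\ the sharp asymptotics of $V_{i}(\varepsilon)$ and $V_{i}^{(1)}(\varepsilon)$ for $i=1,2,3$, has already been carried out in Lemmas \ref{theorem:variance} and \ref{lemma:varianza1:28/04}.
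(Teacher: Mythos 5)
Your proposal is correct and follows essentially the same route as the paper: decompose $\alpha_{\varepsilon}$ into its first chaotic component plus the remainder, use Lemma \ref{lemma:varianza1:28/04} together with Gaussianity of the first chaos for the leading term, and deduce from Lemmas \ref{theorem:variance} and \ref{lemma:varianza1:28/04} via chaos orthogonality that the normalized remainder vanishes in $L^{2}$. The only difference is that you spell out the orthogonality identity $\E[R_{\varepsilon}^{2}]=\E[\alpha_{\varepsilon}^{2}]-\E[I_{1}(f_{1,\varepsilon})^{2}]$ explicitly, which the paper leaves implicit.
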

%-----------------------------------------------------------
\begin{proof}
Let $f_{2q-1,\varepsilon}$ be defined by \eqref{fdef27/01}. By equation \eqref{chaos:alpha}, 
\begin{align*}
\varepsilon^{\frac{3}{2}-\frac{1}{H}}\alpha_{\varepsilon}
 &=\varepsilon^{\frac{3}{2}-\frac{1}{H}}I_{1}\left(f_{1,\varepsilon}\right) +\varepsilon^{\frac{3}{2}-\frac{1}{H}}\sum_{q=2}^{\infty}I_{2q-1}\left(f_{2q-1,\varepsilon}\right).
\end{align*}
By Lemma \ref{lemma:varianza1:28/04},  the variance of $\varepsilon^{\frac{3}{2}-\frac{1}{H}}I_{1}\left(f_{1,\varepsilon}\right)$ converges to $\sigma^{2}$, where $\sigma^{2}$ is defined  by \eqref{eq:5:01/04/2015}. In addition, combining Lemmas \ref{theorem:variance}  and \ref{lemma:varianza1:28/04}, it follows that	the term 
\begin{align*}
\varepsilon^{\frac{3}{2}-\frac{1}{H}}\sum_{q=2}^{\infty}I_{2q-1}\left(f_{2q-1,\varepsilon}\right)
\end{align*}
converges to zero in $L^{2}$. Then \eqref{eq1:03/05/2015} follows from the fact that $\varepsilon^{\frac{3}{2}-\frac{1}{H}}I_{1}\left(f_{1,\varepsilon}\right)$ is Gaussian and its variance converges to $\sigma^{2}$.
\end{proof}
%-----------------------------------------------------------
In the next result we describe the asymptotic behavior of the chaotic components of $\alpha_{\varepsilon}$ in the case $\frac{2}{3}<H<1$.
\begin{Teorema}\label{theorem:l2limit}
Let $T,\varepsilon>0$ and $q\in\N$, $q\geq2$ be fixed. Define $f_{2q-1,\varepsilon}$ by \eqref{fdef27/01}. If $\frac{2}{3}<H<\frac{3}{4}$, then $I_{2q-1}(f_{2q-1,\varepsilon})$ converges in $L^{2}$ when $\varepsilon\rightarrow0$.
\end{Teorema}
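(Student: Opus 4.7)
The plan is to show that the family $\{I_{2q-1}(f_{2q-1,\varepsilon})\}_{\varepsilon>0}$ is Cauchy in $L^2(\Omega)$ as $\varepsilon\to 0$. By the isometry property $\E[I_{2q-1}(g)I_{2q-1}(h)] = (2q-1)!\langle g,h\rangle_{\Hg^{\otimes(2q-1)}}$ for symmetric kernels, this reduces to showing
\begin{align*}
\lim_{\varepsilon,\delta\to 0}\bigl\|f_{2q-1,\varepsilon}-f_{2q-1,\delta}\bigr\|_{\Hg^{\otimes(2q-1)}}^{2}
&=\lim_{\varepsilon,\delta\to 0}\Bigl(\|f_{2q-1,\varepsilon}\|^{2}+\|f_{2q-1,\delta}\|^{2}-2\langle f_{2q-1,\varepsilon},f_{2q-1,\delta}\rangle\Bigr)=0,
\end{align*}
where the norms are taken in $\Hg^{\otimes(2q-1)}$.

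First, by the second part of Lemma \ref{lemma:varianza:28/04}, each of the two norm terms $(2q-1)!\|f_{2q-1,\varepsilon}\|^{2}$ and $(2q-1)!\|f_{2q-1,\delta}\|^{2}$ converges to $\overline{\sigma}_{q}^{2}$ defined in \eqref{eq11n:20/07/2015}. So it remains to handle the cross term, and the key observation is that it has exactly the same structural form. Indeed, proceeding as in the derivation of \eqref{eq2a:06/08/2015} but keeping $\varepsilon$ on one factor and $\delta$ on the other, we get
\begin{align*}
\langle f_{2q-1,\varepsilon,s_{1},t_{1}},f_{2q-1,\delta,s_{2},t_{2}}\rangle_{\Hg^{\otimes(2q-1)}}
&=\beta_{q}^{2}(\varepsilon+(t_{1}-s_{1})^{2H})^{-\frac{1}{2}-q}(\delta+(t_{2}-s_{2})^{2H})^{-\frac{1}{2}-q}\\
&\quad\times\mu(s_{2}-s_{1},t_{1}-s_{1},t_{2}-s_{2})^{2q-1},
\end{align*}
and integrating over $\Sc$ we obtain
\begin{align*}
\langle f_{2q-1,\varepsilon},f_{2q-1,\delta}\rangle_{\Hg^{\otimes(2q-1)}}
&=2\beta_{q}^{2}\int_{\Sc}\widetilde{G}_{\varepsilon,\delta,s_{2}-s_{1}}^{(q)}(t_{1}-s_{1},t_{2}-s_{2})\text{d}s_{1}\text{d}s_{2}\text{d}t_{1}\text{d}t_{2},
\end{align*}
where $\widetilde{G}_{\varepsilon,\delta,x}^{(q)}(u_{1},u_{2}):=(\varepsilon+u_{1}^{2H})^{-\frac{1}{2}-q}(\delta+u_{2}^{2H})^{-\frac{1}{2}-q}\mu(x,u_{1},u_{2})^{2q-1}$.

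The integrand is nonnegative (by \eqref{eq100ma:16/08/2015}) and monotonically increases as $\varepsilon\downarrow 0$ and $\delta\downarrow 0$, with pointwise limit $G_{0,s_{2}-s_{1}}^{(q)}(t_{1}-s_{1},t_{2}-s_{2})$. The monotone convergence theorem therefore yields
\begin{align*}
\lim_{\varepsilon,\delta\to 0}\langle f_{2q-1,\varepsilon},f_{2q-1,\delta}\rangle_{\Hg^{\otimes(2q-1)}}
&=2\beta_{q}^{2}\int_{\Sc}G_{0,s_{2}-s_{1}}^{(q)}(t_{1}-s_{1},t_{2}-s_{2})\text{d}s_{1}\text{d}s_{2}\text{d}t_{1}\text{d}t_{2}=\frac{\overline{\sigma}_{q}^{2}}{(2q-1)!},
\end{align*}
where the finiteness of the limit is ensured by Lemma \ref{lemma:varianza:28/04} (which in turn uses Lemma \ref{Lema:aux_regionsqg2s}). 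Combining this with the convergence of the two diagonal norms, we conclude that $\|f_{2q-1,\varepsilon}-f_{2q-1,\delta}\|_{\Hg^{\otimes(2q-1)}}\to 0$ as $\varepsilon,\delta\to 0$, so $\{I_{2q-1}(f_{2q-1,\varepsilon})\}_{\varepsilon}$ is Cauchy in $L^{2}(\Omega)$ and hence converges. The only nontrivial point is the simultaneous monotonicity in both regularization parameters, but this is immediate once the cross-term is written in product form, so no new technical estimate beyond those already established in Section \ref{section:variances} is required.
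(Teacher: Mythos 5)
Your proof is correct and follows essentially the same route as the paper: expand the $L^2$ distance via the isometry, observe that the cross term is a nonnegative integral that is monotone in the regularization parameters, and apply monotone convergence together with Lemma \ref{Lema:aux_regionsqg2s} (through Lemma \ref{lemma:varianza:28/04}) to identify a common finite limit for all three terms. Your explicit introduction of the mixed kernel $\widetilde{G}_{\varepsilon,\delta,x}^{(q)}$ is in fact a slightly more careful bookkeeping of the cross term than the paper's own display, but the argument is the same.
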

%-----------------------------------------------------------
\begin{proof}
Define  $f_{2q-1,\varepsilon,s,t}$   by \eqref{eq:kernel_definition}. For every $\varepsilon,\eta>0$ we have 
\begin{align*}
\E\left[\left(I_{2q-1}(f_{2q-1,\varepsilon})-I_{2q-1}(f_{2q-1,\eta})\right)^2\right]
  &= \E\left[I_{2q-1}(f_{2q-1,\varepsilon})^2\right]+\E\left[I_{2q-1}(f_{2q-1,\eta})^2\right]\\
	&\quad -2\E\left[I_{2q-1}(f_{2q-1,\varepsilon})I_{2q-1}(f_{2q-1,\eta})\right].
\end{align*}
 Define $\Rc$ and $\Sc$ by \eqref{fdef27asdn/01} and \eqref{eq:1:regions}, respectively. Then we have 
\begin{multline}\label{eqy7qwxgfqbfbf12/08/2015}
\E\left[I_{2q-1}\left(f_{2q-1,\varepsilon}\right)I_{2q-1}\left(f_{2q-1,\eta}\right)\right]\\
\begin{aligned}
  &= (2q-1)!\Ip{f_{2q-1,\varepsilon},f_{2q-1,\eta}}_{\Hg^{\otimes(2q-1)}}\\
	&= (2q-1)!\Ip{\int_{\Rc}f_{2q-1,\varepsilon,s,t}\text{d}s\text{d}t,\int_{\Rc}f_{2q-1,\eta,s,t}\text{d}s\text{d}t}_{\Hg^{\otimes(2q-1)}}\\
	&= 2(2q-1)!\int_{\Sc}\Ip{f_{2q-1,\varepsilon,s_{1},t_{1}},f_{2q-1,\eta,s_{2},t_{2}}}_{\Hg^{\otimes(2q-1)}}\text{d}s_{1}\text{d}s_{2}\text{d}t_{1}\text{d}t_{2}.
\end{aligned}
\end{multline}
Substituting \eqref{eq2a:06/08/2015} into  \eqref{eqy7qwxgfqbfbf12/08/2015}, yields
\[
\E\left[I_{2q-1}(f_{2q-1,\varepsilon})I_{2q-1}(f_{2q-1,\eta})\right]
= 2(2q-1)!\beta_{q}^2 \int_{\Sc}	G_{\varepsilon,s_{2}-s_{1}}^{(q)}(t_{1}-s_{1},t_{2}-s_{2})\text{d}s_{1}\text{d}s_{2}\text{d}t_{1}\text{d}t_{2},
\]
where $G_{\varepsilon ,x}^{(q)}(u_{1},u_{2})$ is defined by \eqref{Gdef}. Since  $G_{\varepsilon ,x}^{(q)}(u_{1},u_{2})$ is nonnegative (see equation  \eqref{Ggeq0}), the integral in the right-hand side of the previous identity is positive and decreasing in the variables $\varepsilon$, $\eta$. Hence, by the monotone convergence theorem, as $\varepsilon, \eta \rightarrow 0$, the terms
$\E\left[I_{2q-1}(f_{2q-1,\varepsilon})I_{2q-1}(f_{2q-1,\eta})\right]$, $\E\left[I_{2q-1}(f_{2q-1,\varepsilon})^2\right]$ and $\E\left[I_{2q-1}(f_{2q-1,\eta})^2\right]$ converge to
\begin{equation} \label{eq5:28/04/2015}
2(2q-1)!\beta_{q}^2 \int_{\Sc}	G_{0,s_{2}-s_{1}}^{(q)}(t_{1}-s_{1},t_{2}-s_{2})\text{d}s_{1}\text{d}s_{2}\text{d}t_{1}\text{d}t_{2}.
\end{equation}
The previous quantity is finite thanks to Lemma \ref{lemma:varianza:28/04}. From the previous analysis we conclude that the sequence $\{I_{2q-1}(f_{2q-1,\varepsilon_{n}})\}_{n\in\N}$ is Cauchy in $L^{2}$, for any sequence $\{\varepsilon_{n}\}_{n\in\N}\subset[0,1]$ such that $\varepsilon_{n}\rightarrow0$ as $n\rightarrow\infty$, which implies the desired result.
\end{proof}
The next result is a central limit theorem for $I_{2q-1}(f_{2q-1,\varepsilon})$ in the case $\frac{3}{2}<H<\frac{4q-3}{4q-2}$.
\begin{Teorema}\label{theorem:distributionchaos}
Let $T,\varepsilon>0$ and $q\in\N$, $q\geq2$ be fixed. Define $f_{2q-1,\varepsilon}$ by \eqref{fdef27/01}. Then, for every  $\frac{3}{4}<H<\frac{4q-3}{4q-2}$ we have
\begin{align}\label{eq1:08/03/2015}
\varepsilon^{1-\frac{3}{4H}}I_{2q-1}(f_{2q-1,\varepsilon})\stackrel{Law}{\rightarrow} \mathcal{N}(0,\sigma_{q}^2),~~~~~~\text{when}~~\varepsilon\rightarrow0,
\end{align}
where $\sigma_{q}^2$ is the finite constant defined by \eqref{eq6:20/07/2015}.
\end{Teorema}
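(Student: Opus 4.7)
The plan is to invoke the fourth moment theorem of Nualart--Peccati. Since each $I_{2q-1}(f_{2q-1,\varepsilon})$ lies in the fixed Wiener chaos $\Hc_{2q-1}$, convergence in law to $\mathcal{N}(0,\sigma_{q}^{2})$ is equivalent to the conjunction of: (i) the variance $(2q-1)!\,\varepsilon^{2-\frac{3}{2H}}\Norm{f_{2q-1,\varepsilon}}_{\Hg^{\otimes(2q-1)}}^{2}$ converges to $\sigma_{q}^{2}$; and (ii) for every $r\in\{1,\dots,2q-2\}$, the normalized contraction norms $\varepsilon^{2-\frac{3}{2H}}\Norm{f_{2q-1,\varepsilon}\otimes_{r}f_{2q-1,\varepsilon}}_{\Hg^{\otimes(4q-2-2r)}}$ vanish as $\varepsilon\to 0$. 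Item (i) is precisely Lemma \ref{lemma:varianza:28/04}, so the entire work consists in establishing (ii).

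Using \eqref{eq:kernel_definition}, each contraction norm can be written as a quadruple integral over $\Rc^{4}$ whose integrand is the product of four weights of the form $(\varepsilon+(t_{i}-s_{i})^{2H})^{-q-\frac{1}{2}}$ together with a product of four $\mu$-factors representing the inner products $\Ip{\Indi{[s_{i},t_{i}]},\Indi{[s_{j},t_{j}]}}_{\Hg}$ prescribed by the contraction pattern. I would then apply the rescaling $(s_{j},t_{j}-s_{j})\mapsto\varepsilon^{1/(2H)}(\tilde s_{j},\tilde u_{j})$ used in the proof of Lemma \ref{lemma:varianza:28/04}, which in view of \eqref{Gdef} converts each weight into a factor $\varepsilon^{-q-\frac{1}{2}}$ times an $\varepsilon$-independent function, and which transforms each $\mu$-factor through \eqref{eq100ma:16/08/2015} into $\varepsilon$ times a rescaled $\mu$. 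Altogether this extracts an explicit power of $\varepsilon$ from the normalized contraction, and the estimate in Lemma \ref{remarkG} produces an $\varepsilon$-independent dominating integrand; the desired vanishing will then follow by dominated convergence once one verifies both that the extracted exponent of $\varepsilon$ is strictly positive for every $r\in\{1,\dots,2q-2\}$ and that the limiting integral is absolutely integrable.

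Both requirements hinge on the Breuer--Major threshold $H<\frac{4q-3}{4q-2}$, equivalent to $(2q-1)(2-2H)>1$, which is exactly the summability condition for the $(2q-1)$-th Hermite rank of the increments of fractional Brownian motion; the continuous version of the Breuer--Major theorem of \cite{BM} proved in \cite{NNDLTVolt} is designed to handle precisely these off-diagonal products of $\mu$-factors and supplies the required integrability bounds, while the multivariate version of the fourth moment theorem from \cite{NP,PTGLVVMSI} is what allows variance and contraction estimates to be bundled into a single distributional conclusion. The main obstacle I anticipate is purely combinatorial: the four weights and the four $\mu$-factors couple through the relative ordering of the eight endpoints $s_{1},t_{1},\dots,s_{4},t_{4}$, so one must enumerate the configurations (disjoint, overlapping, nested intervals) and check case by case that the extracted exponent of $\varepsilon$ is strictly positive and that the limiting rescaled integral is finite, with the condition $H<\frac{4q-3}{4q-2}$ being the sharp requirement in every case. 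Once this is achieved, combining (i) and (ii) in the fourth moment theorem yields the central limit theorem \eqref{eq1:08/03/2015}.
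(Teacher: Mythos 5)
Your strategy differs genuinely from the paper's. The paper does not estimate contractions of $f_{2q-1,\varepsilon}$ at all: after the self-similarity rescaling $N=\varepsilon^{-\frac{1}{2H}}$ it approximates the inner $u$-integral by a Riemann sum, reducing the problem to a \emph{finite-dimensional} vector $Z^{(N)}$ whose $k$-th component is $\frac{1}{\sqrt{N}}\int_0^{NT}I_{2q-1}(f_{2q-1,1,s,s+u(k)})\,\mathrm{d}s$. The asymptotic Gaussianity of each fixed component is then quoted as a black box from the continuous Breuer--Major theorem of \cite{NNDLTVolt} (their equation (1.3)), the Peccati--Tudor criterion upgrades this to joint convergence once the covariance matrix is shown to converge, and an $L^2$-approximation argument uniform in $N$ (Steps I--IV, resting on Lemmas \ref{remarkG} and \ref{Lema:aux_regionlqg2}) transfers the conclusion back to $J_{2q-1,N}$. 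Your plan instead attacks the contraction norms $\Norm{f_{2q-1,\varepsilon}\otimes_r f_{2q-1,\varepsilon}}_{\Hg^{\otimes(4q-2-2r)}}$ directly; this is a legitimate alternative in principle, and your normalization $\varepsilon^{2-\frac{3}{2H}}$ and the scaling behaviour of $\mu$ under $(s,t)\mapsto\varepsilon^{\frac{1}{2H}}(s,t)$ are both correct.

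However, there is a genuine gap: the entire analytic content of the theorem under your strategy is the verification that the normalized contractions vanish, and you leave this as an announced plan rather than a proof. The quantity $\Norm{f_{2q-1,\varepsilon}\otimes_r f_{2q-1,\varepsilon}}^2$ is an integral over $\Rc^4$ (eight time variables) whose integrand contains products of the form $\mu(\cdot)^{r}\mu(\cdot)^{r}\mu(\cdot)^{2q-1-r}\mu(\cdot)^{2q-1-r}$ coupled with four weights $(\varepsilon+(t_i-s_i)^{2H})^{-q-\frac12}$; bounding this uniformly and identifying the correct power of $\varepsilon$ requires an ordering/configuration analysis at least as delicate as Lemmas \ref{Lema:aux_regionlqg2} and \ref{Lema:aux_regionsqg2s}, and both the lower bound $H>\frac34$ (for integrability of the off-diagonal singularities, exactly as in Lemma \ref{Lema:aux_regionlqg2}) and the upper bound $H<\frac{4q-3}{4q-2}$ (for decay of $\mu^{2q-1}$ at infinity) would enter --- your claim that $H<\frac{4q-3}{4q-2}$ is ``the sharp requirement in every case'' is not accurate. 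Moreover, your assertion that the Breuer--Major theorem of \cite{NNDLTVolt} ``supplies the required integrability bounds'' for these contractions mischaracterizes that reference: it provides a central limit theorem for $\sqrt{N}\int_0^{T}H_{2q-1}(N^H(B_{r+1/N}-B_r))\,\mathrm{d}r$, which the paper uses precisely so as \emph{not} to have to estimate contractions of the integrated kernels $f_{2q-1,\varepsilon}$; it contains no contraction bounds applicable to your eight-variable integrals. As written, the proposal identifies the right reduction but does not prove the step on which everything rests.
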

%-----------------------------------------------------------
\begin{proof} 
Define $f_{2q-1,\varepsilon,s,t}$, for $0\leq s\leq t$,  by \eqref{eq:kernel_definition} and $\Rc$ by 	\eqref{fdef27asdn/01}. By \eqref{fdef27/01},
\begin{align*}
\varepsilon^{1-\frac{3}{4H}}I_{2q-1}(f_{2q-1,\varepsilon})
  &= (-1)^{q}\varepsilon^{1-\frac{3}{4H}}\int_{\Rc}\beta_{q}(\varepsilon+(t-s)^{2H})^{-\frac{1}{2}-q}I_{2q-1}\left(\Indi{[s,t]}^{\otimes(2q-1)}\right)\text{d}s\text{d}t.
\end{align*}
Then, using the self-similarity of the fractional Brownian motion we get
\begin{multline*}
\varepsilon^{1-\frac{3}{4H}}I_{2q-1}(f_{2q-1,\varepsilon})\\
\begin{aligned}
&\stackrel{Law}{=} (-1)^{q}\varepsilon^{1-\frac{3}{4H}}\int_{\Rc}\beta_{q}(\varepsilon+(t-s)^{2H})^{-\frac{1}{2}-q}I_{2q-1}\left(\left(\sqrt{\varepsilon}\Indi{\varepsilon^{-\frac{1}{2H}}[s,t]}\right)^{\otimes(2q-1)}\right)\text{d}s\text{d}t.
\end{aligned}
\end{multline*}
Therefore, changing the coordinates $(s,t)$ by $(\varepsilon^{-\frac{1}{2H}}s,\varepsilon^{-\frac{1}{2H}}t)$ we get 
\begin{multline}\label{eq1a:14/07/2015}
\varepsilon^{1-\frac{3}{4H}}I_{2q-1}(f_{2q-1,\varepsilon})\\
\begin{aligned}
	&\stackrel{Law}{=} (-1)^{q}\varepsilon^{\frac{1}{4H}}\int_{\varepsilon^{-\frac{1}{2H}}\Rc}\beta_{q}\left(1+(t-s)^{2H}\right)^{-\frac{1}{2}-q}I_{2q-1}\left(\Indi{[s,t]}^{\otimes(2q-1)}\right)\text{d}s\text{d}t\\
	&= \varepsilon^{\frac{1}{4H}}\int_{\varepsilon^{-\frac{1}{2H}}\Rc}I_{2q-1}\left(f_{2q-1,1,s,t}\right)\text{d}s\text{d}t.
\end{aligned}
\end{multline}
Changing the coordinates $(s,t)$ by $(s,u:=t-s)$ in \eqref{eq1a:14/07/2015}, and defining $N:=\varepsilon^{-\frac{1}{2H}}$, we obtain 
\begin{align}\label{eq1m:17/07/2015}
\varepsilon^{1-\frac{3}{4H}}I_{2q-1}(f_{2q-1,\varepsilon})
&\stackrel{Law}{=} \frac{1}{\sqrt{N}}\int_{0}^{NT}\int_{0}^{NT-s} I_{2q-1}\left(f_{2q-1,1,s,s+u}\right)\text{d}u\text{d}s.
\end{align}
From \eqref{eq1m:17/07/2015} it follows that the convergence \eqref{eq1:08/03/2015} is equivalent to 
\begin{align}\label{eq2m:17/07/2015}
\frac{1}{\sqrt{N}}\int_{0}^{NT}\int_{0}^{NT-s} I_{2q-1}\left(f_{2q-1,1,s,s+u}\right)\text{d}u\text{d}s
 &\stackrel{Law}{\rightarrow}\mathcal{N}(0,\sigma_{q}^{2}),~~~~~~\text{ as }~~N\rightarrow\infty.
\end{align}
The proof of \eqref{eq2m:17/07/2015} will be done in several steps. \\~\\
\textit{Step I}\\ 
Define the random variable
\begin{align*}
Y_{N}
	&:=\frac{1}{\sqrt{N}}\int_{0}^{NT}\int_{NT-s}^{\infty}I_{2q-1}\left(f_{2q-1,1,s,s+u}\right)\text{d}u\text{d}s.
\end{align*}
First we show that $Y_{N}$ converges to zero in $L^{2}$ as $N\rightarrow\infty$. Notice that 
\begin{align}
\E\left[Y_{N}^2\right]
  &=\frac{2}{N}\int_{0}^{NT}\int_{0}^{NT}\int_{NT-s_{2}}^{\infty}\int_{NT-s_{1}}^{\infty}\Ind{s_{1}\leq s_{2}}\nonumber\\
	&\times\E\left[I_{2q-1}\left(f_{2q-1,1,s_{1},s_{1}+u_{1}}\right)I_{2q-1}\left(f_{2q-1,1,s_{2},s_{2}+u_{2}}\right)\right]\text{d}u_{1}\text{d}u_{2}\text{d}s_{1}\text{d}s_{2}\nonumber\\
	&= \frac{2(2q-1)!}{N}\int_{0}^{NT}\int_{0}^{NT}\int_{NT-s_{2}}^{\infty}\int_{NT-s_{1}}^{\infty}\Ind{s_{1}\leq s_{2}}\nonumber\\
	&\times\Ip{f_{2q-1,1,s_{1},s_{1}+u_{1}},f_{2q-1,1,s_{2},s_{2}+u_{2}}}_{\Hg^{\otimes(2q-1)}}\text{d}u_{1}\text{d}u_{2}\text{d}s_{1}\text{d}s_{2}.\label{eq0sss.1:03/05/2015}
\end{align}
Define the function $G_{1,x}^{(q)}(v_{1},v_{2})$, $x,v_{1},v_{2}\geq0$, as in \eqref{Gdef}. Substituting equation \eqref{eq2a:06/08/2015} in \eqref{eq0sss.1:03/05/2015}, and changing the order of integration, we get
\begin{align}
\E\left[Y_{N}^2\right]
	&= \frac{2(2q-1)!\beta_{q}^{2}}{N}\int_{0}^{\infty}\int_{0}^{\infty}\int_{0\vee(NT-u_{2})}^{NT}\int_{0\vee(NT-u_{1})}^{NT}
	\Ind{s_{1}\leq s_{2}}\nonumber\\
	&\times G_{1,s_{2}-s_{1}}^{(q)}(u_{1},u_{2})\text{d}s_{1}\text{d}s_{2}\text{d}u_{1}\text{d}u_{2}.\label{eq0ns.1:03/05/2015}
\end{align}
Changing the coordinates $(s_{1},s_{2},u_{1},u_{2})$ by $(s_{1},x:=s_{2}-s_{1},u_{1},u_{2})$ in the right hand side of \eqref{eq0ns.1:03/05/2015}, we get 
\begin{align*}
\E\left[Y_{N}^2\right]
	&\leq \frac{2(2q-1)!\beta_{q}^{2}}{N}\int_{\R_{+}^{3}}\int_{0\vee (NT-u_{1})}^{NT}G_{1,x}^{(q)}(u_{1},u_{2})\text{d}s_{1}\text{d}x\text{d}u_{1}\text{d}u_{2},%\label{eq0ns.1:03/05/2015ffaasssc}
\end{align*}
and then integrating the $s_{1}$ variable, 
\begin{align}
\E\left[Y_{N}^2\right]
	&\leq 2(2q-1)!\beta_{q}^{2}\int_{\R_{+}^{3}}\left(T-\frac{0\vee(NT-u_{1})}{N}\right)G_{1,x}^{(q)}(u_{1},u_{2})\text{d}x\text{d}u_{1}\text{d}u_{2}.\label{eq0.1:03/05/2015}
\end{align}
The integrand in \eqref{eq0.1:03/05/2015} converges to zero pointwise, and is dominated by the function
\begin{align*}
2(2q-1)!\beta_{q}^{2}TG_{1,x}^{(q)}(u_{1},u_{2}).
\end{align*}
By condition $H<\frac{4q-3}{4q-2}$ and Lemma 5.8,  the function  $G_{1,x}^{(q)}(u_{1},u_{2})$ is integrable in $\R^3_+$.
Hence, applying the dominated convergence theorem to \eqref{eq0.1:03/05/2015}, we obtain $\E\left[Y_{N}^2\right]\rightarrow0$, as $N\rightarrow\infty$ as required.\\\\
%---------------------------------------------------------------
\textit{Step II}\\
Since $Y_{N}\rightarrow0$ in $L^{2}$ as $N\rightarrow\infty$, to prove the convergence \eqref{eq2m:17/07/2015} it suffices to show that the random variable 
\begin{align*}
J_{2q-1,N}
  &:= \frac{1}{\sqrt{N}}\int_{0}^{N T}\int_{0}^{\infty} I_{2q-1}\left(f_{2q-1,1,s,s+u}\right)\text{d}u\text{d}s,
\end{align*}
converges in law to a Gaussian distribution with variance $\sigma_{q}^{2}$ as $N\rightarrow\infty$. For $M\in\N$, $M\geq1$ fixed, consider the following Riemann sum approximation for $J_{2q-1,N}$
\begin{align*}
\widetilde{J}_{2q-1,M,N}
 &:=\frac{1}{2^{M}}\sum_{k=2}^{M2^{M}}\frac{1}{\sqrt{N}}\int_{0}^{NT}I_{2q-1}\left(f_{2q-1,1,s,s+u(k)}\right)\text{d}s,
\end{align*}
where $u(k):=\frac{k}{2^{M}}$, for $k=2,\dots,M2^{M}$. We will prove that $\widetilde{J}_{2q-1,M,N}\rightarrow J_{2q-1,N}$ in $L^{2}$ as $M\rightarrow\infty$ uniformly in $N>1$, and $\widetilde{J}_{2q-1,M,N}\rightarrow \mathcal{N}(0,\widetilde{\sigma}_{q,M}^{2})$ as $N\rightarrow\infty$  for some constant $\widetilde{\sigma}_{q,M}^{2}$ satisfying $\widetilde{\sigma}_{q,M}^{2}\rightarrow \sigma_{q}^{2}$ as $M\rightarrow\infty$. The result will then follow by a standard approximation argument. We will separate the argument in the following steps.\\\\ 
%---------------------------------------------------------------
\textit{Step III}\\
Next we prove that prove that $\widetilde{J}_{2q-1,M,N}\rightarrow J_{2q-1,N}$ in $L^{2}$ as $M\rightarrow\infty$ uniformly in $N>1$, namely,
\begin{align}\label{eq9:28/04/2015}
\lim_{M\rightarrow\infty}\sup_{N>1}\Norm{J_{2q-1,N}-\widetilde{J}_{2q-1,M,N}}_{L^2}=0.
\end{align}
For $M\in\N$ fixed, we decompose  the term $J_{2q-1,N}$ as 
\begin{align}	
J_{2q-1,N}
  &= J_{2q-1,M,N}^{(1)}+J_{2q-1,M,N}^{(2)},\label{eq2n:13/08/2015}
\end{align}
where 
\begin{align*}
J_{2q-1,M,N}^{(1)}
  &:=\frac{1}{\sqrt{N}}\int_{0}^{N T}\int_{2^{-M}}^{M} I_{2q-1}\left(f_{2q-1,1,s,s+u}\right)\text{d}u\text{d}s
\end{align*}
and 
\begin{align*}
J_{2q-1,M,N}^{(2)}
  &:=\frac{1}{\sqrt{N}}\int_{0}^{N T}\int_{0}^{\infty}\Indi{(0,2^{-M})\cup(M,\infty)}(u) I_{2q-1}\left(f_{2q-1,1,s,s+u}\right)\text{d}u\text{d}s.
\end{align*}
%=====================================================================================
From \eqref{eq2n:13/08/2015} we deduce that relation \eqref{eq9:28/04/2015} is equivalent to 
\begin{align}
\lim_{M\rightarrow\infty}\sup_{N>1}\Norm{J_{2q-1,M,N}^{(1)}-\widetilde{J}_{2q-1,M,N}}_{L^{2}}
  &=0\label{eq1m:13/08/2015},
\end{align}
provided that 
\begin{align}
\lim_{M\rightarrow\infty}\sup_{N>1}\Norm{J_{2q-1,M,N}^{(2)}}_{L^{2}}
  &=0\label{eq1n:13/08/2015}.
\end{align}
%--------------------------------------------------------------------------
To prove \eqref{eq1n:13/08/2015} we proceed as follows. First we write
\begin{align}\label{eq510n:14/08/2015}
\Norm{J_{2q-1,M,N}^{(2)}}_{L^{2}}^{2}
  &=\frac{2(2q-1)!}{N}\int_{\R_{+}^{2}}\int_{[0,NT]^2}\Indi{(0,2^{-M})\cup(M,\infty)}(u_{1})\Indi{(0,2^{-M})\cup(M,\infty)}(u_{2})\nonumber\\
  &~~\times \Indi{\{s_{1}\leq s_{2}\}}\Ip{f_{2q-1,1,s_{1},s_{1}+u_{1}},f_{2q-1,1,s_{2},s_{2}+u_{2}}}_{\Hg^{\otimes(2q-1)}}\text{d}s_{1}\text{d}s_{2}\text{d}u_{1}\text{d}u_{2}.
\end{align}
Let $G_{1,x}^{(q)}(v_{1},v_{2})$, $x,v_{1},v_{2}\in\R_{+}$ be defined by \eqref{Gdef}.  Applying identity \eqref{eq2a:06/08/2015} in \eqref{eq510n:14/08/2015}, and then changing the coordinates $(s_{1},s_{2},u_{1},u_{2})$ by $(s_{1},x:=s_{2}-s_{1},u_{1},u_{2})$  in \eqref{eq510n:14/08/2015}, we get 
\begin{align}\label{eq4sdfff:20/04/2015}
\Norm{J_{2q-1,M,N}^{(2)}}_{L^{2}}^{2}
  &\leq \frac{2(2q-1)!\beta_{q}^{2}}{N}\int_{\R_{+}^{3}}\int_{0}^{NT}
\Indi{(0,2^{-M})\cup(M,\infty)}(u_{1})\nonumber\\
&~\times\Indi{(0,2^{-M})\cup(M,\infty)}(u_{2})G_{1,x}^{(q)}(u_{1},u_{2})\text{d}s_{1}\text{d}x\text{d}u_{1}\text{d}u_{2}.
\end{align}
Integrating the variable $s_{1}$ in \eqref{eq4sdfff:20/04/2015} we obtain
\begin{align}\label{eq:31}
\Norm{J_{2q-1,M,N}^{(2)}}_{L^{2}}^{2}
  &\leq 2T(2q-1)!\beta_{q}^{2}\int_{\R_{+}^{3}}\Indi{(0,2^{-M})\cup(M,\infty)}(u_{2})\nonumber\\
	&~\times\Indi{(0,2^{-M})\cup(M,\infty)}(u_{2})G_{1,x}^{(q)}(u_{1},u_{2})\text{d}x\text{d}u_{1}\text{d}u_{2}.
\end{align}
The integrand is dominated by the function $2(2q-1)!\beta_{q}^{2}TG_{1,x}^{(q)}(u_{1},u_{2})$, which is integrable by the condition $H<\frac{2q-3}{4q-2}$, and Lemma \ref{Lema:aux_regionlqg2}. Hence, applying the dominated convergence theorem  to \eqref{eq:31}, we get \eqref{eq1n:13/08/2015}.

To  prove \eqref{eq1m:13/08/2015} we proceed as follows. For $k=2,\dots,M2^{M}$ define the interval $I_{k}:=\left(\frac{k-1}{2^{M}},\frac{k}{2^{M}}\right]$. Notice that $J_{2q-1,M,N}^{(1)}$ and $\widetilde{J}_{2q-1,M,N}$ can be written, respectively,  as 
\begin{equation}
J_{2q-1,M,N}^{(1)}
  =\frac{1}{\sqrt{N}}\int_{0}^{NT}\int_{\R_{+}}\sum_{k=2}^{M2^{M}}I_{2q-1}\left(f_{2q-1,1,s,s+u}\right)\Indi{I_{k}}(u)\text{d}u\text{d}s,\label{eq21p:10/07/2015}
  \end{equation}
  and
  \begin{equation}
\widetilde{J}_{2q-1,M,N}
  =\frac{1}{\sqrt{N}}\int_{0}^{NT}\int_{\R_{+}}\sum_{k=2}^{M2^{M}}I_{2q-1}\left(f_{2q-1,1,s,s+u(k)}\right)\Indi{I_{k}}(u)\text{d}u\text{d}s.\label{eq21:10/07/2015}
\end{equation}
Applying \eqref{eq2a:06/08/2015}, we can prove that 
\begin{align}
\Norm{J_{2q-1,M,N}^{(1)}-\widetilde{J}_{2q-1,M,N}}_{L^{2}}^{2}
  &=\frac{2(2q-1)!\beta_{q}^{2}}{N}\int_{\R_{+}^{2}}\int_{[0,NT]^{2}}\sum_{k_{1},k_{2}=2}^{M2^{M}}\Indi{I_{k_{1}}}(u_{1})\Indi{I_{k_{2}}}(u_{2})\nonumber\\
	&\times \Indi{\{s_{1}\leq s_{2}\}}\Theta_{k_{1},k_{2}}^{(q)}(s_{2}-s_{1},u_{1},u_{2})\text{d}s_{1}\text{d}s_{2}\text{d}u_{1}\text{d}u_{2},\label{eq6m:13/08/2015a}
\end{align}
where the function $\Theta_{k_{1},k_{2}}^{(q)}$ is defined by
\begin{align*}
\Theta_{k_{1},k_{2}}^{(q)}(x,u_{1},u_{2})
  &:=\bigg(G_{1,x}^{(q)}(u_{1},u_{2})-G_{1,x}^{(q)}(u(k_{1}),u_{2})\\
	&~~-G_{1,x}^{(q)}(u_{1},u(k_{2}))+G_{1,x}^{(q)}(u(k_{1}),u(k_{2}))\bigg).
\end{align*}
Changing the coordinates $(s_{1},s_{2},u_{1},u_{2})$ by $(s_{1},x:=s_{2}-s_{1},u_{1},u_{2})$, and then integrating the $s_{1}$ variable in \eqref{eq6m:13/08/2015a}, we obtain
\begin{align*}
\Norm{J_{2q-1,M,N}^{(1)}-\widetilde{J}_{2q-1,M,N}}_{L^{2}}^{2}
  &=2(2q-1)!\beta_{q}^{2}\int_{\R_{+}^{2}}\int_{0}^{NT}\sum_{k_{1},k_{2}=2}^{M2^{M}}\Indi{I_{k_{1}}}(u_{1})\Indi{I_{k_{2}}}(u_{2})\nonumber\\
	&~~\times \left(T-\frac{x}{N}\right)\Theta_{k_{1},k_{2}}^{(q)}(x,u_{1},u_{2})\text{d}x\text{d}u_{1}\text{d}u_{2}.
\end{align*}
As a consequence, 
\begin{align*}
\Norm{J_{2q-1,M,N}^{(1)}-\widetilde{J}_{2q-1,M,N}}_{L^{2}}^{2}
  &\leq 2(2q-1)!\beta_{q}^{2}T\int_{\R_{+}^{3}}\sum_{k_{1},k_{2}=2}^{M2^{M}}\Indi{I_{k_{1}}}(u_{1})\Indi{I_{k_{2}}}(u_{2})\nonumber\\
	&~~\times \Theta_{k_{1},k_{2}}^{(q)}(x,u_{1},u_{2})\text{d}x\text{d}u_{1}\text{d}u_{2}.
\end{align*}
By the continuity of $G_{1,x}(u_{1},u_{2})$, the term 
\begin{align*}
\sum_{k_{1},k_{2}=2}^{M2^{M}}\Indi{I_{k_{1}}}(u_{1})\Indi{I_{k_{2}}}(u_{2})\Theta_{k_{1},k_{2}}^{(q)}(x,u_{1},u_{2})
\end{align*}
converges to zero as $M\rightarrow\infty$. Next we prove that this term is dominated by an integrable function. Let $u_{1}\in I_{k_{1}},u_{2}\in I_{k_{2}}$ be fixed.  Notice that $u_{i},u(k_{i})\leq u_{i}+2^{-M}\leq u_{i}+1$ for $i=1,2$. Hence, applying Lemma \ref{remarkG}, we deduce that the terms $G_{1,x}^{(q)}(u_{1},u_{2})$, $G_{1,x}^{(q)}(u(k_{1}),u_{2})$, $G_{1,x}^{(q)}(u_{1},u(k_{2}))$ and  $G_{1,x}^{(q)}(u(k_{1}),u(k_{2}))$ are bounded by $KG_{1,x}^{(q)}(u_{1}+1,u_{2}+1)$, for some constant $K>0$ only depending on $H$ and $q$. As a consequence, 
\begin{align*}
\sum_{k_{1},k_{2}=2}^{M2^{M}}\Indi{I_{k_{1}}}(u_{1})\Indi{I_{k_{2}}}(u_{2})\Theta_{k_{1},k_{2}}^{(q)}(x,u_{1},u_{2})
  &\leq 4K  G_{1,x}^{(q)}(u_{1}+1,u_{2}+1),
\end{align*}
for some constant $K$ only depending on $H$ and $q$.  Therefore,   
 the right-hand side of the previous identity is integrable over $x,u_{1},u_{2}>0$ due to  Lemma \ref{Lema:aux_regionlqg2}, since 
\begin{align}\label{eq3n:01/11/2015}
 \int_{\R_{+}^{3}}G_{1,x}^{(q)}(u_{1}+1,u_{2}+1)\text{d}x\text{d}u_{1}\text{d}u_{2}
 &=\int_{[1,\infty)^{2}}\int_{\R_{+}}G_{1,x}^{(q)}(u_{1},u_{2})\text{d}x\text{d}u_{1}\text{d}u_{2}\nonumber\\
 &\leq\int_{\R_{+}^{3}} G_{1,x}^{(q)}(u_{1},u_{2})\text{d}x\text{d}u_{1}\text{d}u_{2}<\infty.
\end{align}
This finishes the proof of \eqref{eq1m:13/08/2015}.\\\\
%%=====
\textit{Step IV}\\
%=====
Next we  prove that
\begin{align}\label{eq3a:16/07/2015}
\lim_{N\rightarrow\infty}\E\left[\widetilde{J}_{2q-1,M,N}^2\right]
  &=\widetilde{\sigma}_{q,M}^{2}, 
\end{align}	
where $\widetilde{\sigma}_{q,M}^{2}$ is the finite constant defined by 
\begin{align}
\widetilde{\sigma}_{q,M}^{2}
  &:=(2q-1)!\beta_{q}^{2}2^{1-2M}T\sum_{k_{1},k_{2}=2}^{M2^{M}}\int_{0}^{\infty}G_{1,x}^{(q)}(u(k_{1}),u(k_{2}))\text{d}x\label{eq3m:17/07/2015}.
\end{align}
In addition, we will prove that $\widetilde{\sigma}_{q,M}^{2}$ satisfies
\begin{align}
\lim_{M\rightarrow\infty}\widetilde{\sigma}_{q,M}^{2}
  &=\sigma_{q}^{2}\label{eq1n:14/07/2015},
\end{align}
where $\sigma_{q}^{2}$ is defined by \eqref{eq6:20/07/2015}. In order to prove \eqref{eq3a:16/07/2015} and \eqref{eq1n:14/07/2015} we proceed as follows. From \eqref{eq21:10/07/2015}, we can prove that
\begin{align*}
\E \left[\widetilde{J}_{2q-1,M,N}^{2}\right]
 &=\int_{\R_{+}^{3}}Q_{M,N}(x,u_{1},u_{2})\text{d}x\text{d}u_{1}\text{d}u_{2},
\end{align*}
where 
\begin{align*}
Q_{M,N}(x,u_{1},u_{2})
  &:=2(2q-1)!\Indi{[0,NT]}(x)\beta_{q}^{2}\sum_{k_{1},k_{2}=2}^{M2^{M}}\left(T-\frac{x}{N}\right)\nonumber\\
 &~\times G_{1,x}^{(q)}(u(k_{1}),u(k_{2}))\Indi{I_{k_{1}}}(u_{1})\Indi{I_{k_{2}}}(u_{2}).
\end{align*}
Notice that $Q_{M,N}$ satisfies
\begin{align}
\lim_{N\rightarrow\infty}Q_{M,N}(x,u_{1},u_{2})
  &=Q_{M}(x,u_{1},u_{2}),\label{eq4m:15/07/2015}
\end{align}
where $Q_{M}$ is defined by
\begin{align*}
Q_{M}(x,u_{1},u_{2})
  &:=2(2q-1)!\beta_{q}^{2}T\sum_{k_{1},k_{2}=2}^{M2^{M}}G_{1,x}^{(q)}(u(k_{1}),u(k_{2}))\Indi{I_{k_{1}}}(u_{1})\Indi{I_{k_{2}}}(u_{2}).
\end{align*}
In turn, $Q_{M}$ satisfies 
\begin{align}
\lim_{M\rightarrow\infty}Q_{M}(x,u_{1},u_{2})
  &=Q(x,u_{1},u_{2}),\label{eq5m:15/07/2015}
\end{align}
where $Q$ is defined by 
\begin{align*}
Q(x,u_{1},u_{2})
  &:=2(2q-1)!\beta_{q}^{2}TG_{1,x}^{(q)}(u_{1},u_{2}).
\end{align*}
Let $x>0$ and $2\leq k_{1},k_{2}\leq M2^{M}$ be fixed, and take $u_{i}\in I_{k_{i}}$, $i=1,2$. Since $u(k_{i})\leq u_{i}+2^{-M}\leq u_{i}+1$, by Lemma \ref{remarkG},   there exists a constant $K>0$, only depending on $q$ and $H$, such that
\begin{align*}
G_{1,x}^{(q)}(u(k_{1}),u(k_{2}))
  &\leq KG_{1,x}^{(q)}(u_{1}+1,u_{2}+1),
\end{align*}
  As a consequence, there exists a constant $K$ only depending on $q,H$ and $T$ such that
\begin{align}
Q_{M,N}(x,u_{1},u_{2})
  &\leq KG_{1,x}^{(q)}(u_{1}+1,u_{2}+1)\label{eq6m:15/07/2015},
\end{align}
and, hence, 
\begin{align}
Q_{M}(x,u_{1},u_{2})
  &\leq KG_{1,x}^{(q)}(u_{1}+1,u_{2}+1)\label{eq6m:15/07/2015ss}.
\end{align}
The function $G_{1,x}^{(q)}(u_{1}+1,u_{2}+1)$ is integrable with respect to the variables $x,u_{1},u_{2}>0$ thanks to \eqref{eq3n:01/11/2015}. Hence, taking into account  \eqref{eq4m:15/07/2015} and  \eqref{eq5m:15/07/2015}, as well as the estimates \eqref{eq6m:15/07/2015} and 
\eqref{eq6m:15/07/2015ss}, we can apply the dominated convergence theorem twice, to obtain
\begin{align}
\lim_{M\rightarrow\infty}\lim_{N\rightarrow\infty}\E \left[\widetilde{J}_{2q-1,M,N}^{2}\right]
&=\lim_{M\rightarrow\infty}\int_{\R_{+}^{3}}Q_{M}(x,u_{1},u_{2})\text{d}x\text{d}u_{1}\text{d}u_{2}\nonumber\\
&=\int_{\R_{+}^{3}}Q(x,u_{1},u_{2})\text{d}x\text{d}u_{1}\text{d}u_{2}.\label{eq30cfmasdaaddasdaf:15/07/2015}
\end{align}
Equations \eqref{eq3a:16/07/2015} and \eqref{eq1n:14/07/2015} then follow from   \eqref{eq30cfmasdaaddasdaf:15/07/2015}.\\\\
%==========================================================================================
\textit{Step V}\\
Next we prove the convergence in law of $J_{2q-1,N}$ to a Gaussian random variable with variance $\sigma_{q}^{2}$, which we will denote by $\mathcal{N}(0,\sigma_{q}^{2})$. Let $y\in\R$ be fixed. Notice that 
\begin{align}
\Abs{\Pb[J_{2q-1,N}\leq y]-\Pb[\mathcal{N}\left(0,\sigma_{q}^{2}\right)\leq y]}
  &\leq \sup_{N>1}\Abs{\Pb\left[J_{2q-1,N}\leq y\right]-\Pb\left[\widetilde{J}_{2q-1,M,N}\leq y\right]}\nonumber\\
	&+ \Abs{\Pb\left[\widetilde{J}_{2q-1,M,N}\leq y\right]-\Pb\left[\mathcal{N}(0,\widetilde{\sigma}_{q,M}^{2})\leq y\right]}\nonumber\\
	&+\Abs{\Pb\left[\mathcal{N}(0,\widetilde{\sigma}_{q,M}^{2})\leq y\right]-\Pb\left[\mathcal{N}(0,\sigma_{q}^{2})\leq y\right]}\label{eq1m:16/07/2015}.
\end{align}
Therefore, if we prove that for $M>0$ fixed
\begin{align}\label{eq10:30/04/20/15}
\widetilde{J}_{2q-1,M,N}\stackrel{Law}{\rightarrow} \mathcal{N}\left(0,\widetilde{\sigma}_{q,M}^{2}\right)~~~~~~\text{ as }~~N\rightarrow\infty,
\end{align}
then from \eqref{eq1m:16/07/2015} we get
\begin{align}
\limsup_{N\rightarrow\infty}\Abs{\Pb[J_{2q-1,N}\leq y]-\Pb[\mathcal{N}(0,\sigma_{q}^{2})\leq y]}
  &\leq \sup_{N>1}\Abs{\Pb\left[J_{2q-1,N}\leq y\right]-\Pb\left[\widetilde{J}_{2q-1,M,N}\leq y\right]}\nonumber\\
	&\quad+\Abs{\Pb\left[\mathcal{N}\left(0,\widetilde{\sigma}_{q,M}^{2}\right)\leq y\right]-\Pb\left[\mathcal{N}\left(0,\sigma_{q}^{2}\right)\leq y\right]}\label{eq2m:16/07/2015},
\end{align}
and hence, from relations \eqref{eq9:28/04/2015}, \eqref{eq1n:14/07/2015} and \eqref{eq2m:16/07/2015}, we conclude that 
\begin{align}\label{eq5m:16/07/2015}
\limsup_{N\rightarrow\infty}\Abs{\Pb[J_{2q-1,N}^2\leq y]-\Pb[\mathcal{N}(0,\sigma_{q}^{2})\leq y]}
  &=0,
\end{align}
and the proof will then be complete. Therefore, it suffices to show \eqref{eq10:30/04/20/15} for $M$ fixed. To prove this first we show that the random vector 
\begin{align*}
Z^{(N)}
  &=\left(Z_{k}^{(N)}\right)_{k=2}^{M2^{M}}:=\left(\frac{1}{\sqrt{N}}\int_{0}^{NT}I_{2q-1}\left(f_{2q-1,1,s,s+u(k)}\right)\text{d}s\right)_{k=2}^{M2^{M}}
\end{align*}
converges to a multivariate Gaussian distribution. By the Peccati-Tudor criterion (see \cite{PTGLVVMSI}), it suffices to prove that the components of the vector $Z^{(N)}$ converge to a Gaussian distribution, and the covariance matrix of $Z^{(N)}$ is convergent.
 
In order to prove that the covariance matrix of $Z^{(N)}$ is convergent we proceed as follows. First, for $2\leq j,k\leq M2^{M}$, we write  
\begin{align*}
\E\left[Z_{k}^{(N)}Z_{j}^{(N)}\right]
  &=\frac{1}{N}\int_{[0,NT]^{2}}\E\left[I_{2q-1}\left(f_{2q-1,1,s_{1},s_{1}+u(k)}\right)I_{2q-1}\left(f_{2q-1,1,s_{2},s_{2}+u(j)}\right)\right]\text{d}s_1\text{d}s_2.
\end{align*}
Then, using \eqref{eq2a:06/08/2015} we get  
\begin{align}
\E\left[Z_{k}^{(N)}Z_{j}^{(N)}\right]
  &=\frac{(2q-1)!\beta_{q}^2}{N}\int_{[0,NT]^2}G_{1,s_{2}-s_{1}}^{(q)}(u(k),u(j))\text{d}s_{1}\text{d}s_{2},\label{eq1a:16/07/2015}
\end{align}
where in the last equality we used the notation $G_{1,-y}(v_{1},v_{2}):=G_{1,y}(v_{2},v_{1})$, for $y,v_{1},v_{2}>0$. Changing the coordinates $(s_{1},s_{2})$ by $(s_{1},x:=s_{2}-s_{1})$ in relation \eqref{eq1a:16/07/2015} and integrating the $s_{1}$, yields
\begin{align}
\E\left[Z_{k}^{(N)}Z_{j}^{(N)}\right]
  &=(2q-1)!\beta_{q}^2\int_{-NT}^{NT}\left(T-\frac{|x|}{N}\right)G_{1,x}^{(q)}(u(k),u(j))\text{d}x.\label{eq2a:16/07/2015}
\end{align}
Finally, applying the monotone convergence theorem in \eqref{eq2a:16/07/2015}, we get 
\begin{align*}
\lim_{N\rightarrow\infty}\E\left[Z_{k}^{(N)}Z_{j}^{(N)}\right]
  &=(2q-1)!\beta_{q}^2T\int_{\R}G_{1,x}^{(q)}(u(k),u(j))\text{d}x,
\end{align*}
which is clearly finite. Thus, we have proved that  the covariance matrix of $Z^{(N)}$ converges to the matrix $\Sigma=(\Sigma_{k,j})_{2\leq k,j\leq M2^{M}}$, where
\begin{align*}
\Sigma_{k,j}
  &:=T(2q-1)!\beta_{q}^2\int_{\R}G_{1,x}^{(q)}(u(k),u(j))\text{d}x.
\end{align*}
Next, for $2\leq k\leq M2^{M}$ fixed, we prove the convergence of $Z_{k}^{(N)}$ to a Gaussian law. By \eqref{eq:kernel_definition},
\begin{align*}
Z_{k}^{(N)}
  &=  \frac {C_{q,k}}{ \sqrt{N}}  \int_{0}^{NT}I_{2q-1}\left(\Indi{[s,s+u_{k}]}^{\otimes(2q-1)}\right)\text{d}s,
\end{align*}
where $C_{q,k}= (-1)^{q}\beta_{q}(1+u_{k}^{2H})^{-\frac{1}{2}-q}$.
Hence, by the self-similarity of the fractional Brownian motion we can write
\begin{align}\label{eq3n:23/11/2015}
Z_{k}^{(N)}
  &\stackrel{Law}{=} \frac{ C_{q,k} }{\sqrt{N}}\int_{0}^{NT}I_{2q-1}\left(\left(u_{k}^{H}N^{H}\Indi{[\frac{s}{Nu_{k}},\frac{s}{Nu_{k}}+\frac{1}{N}]}\right)^{\otimes(2q-1)}\right)\text{d}s.
\end{align}
Making the change of variables $r:=\frac{s}{Nu_{k}}$ in the right hand side of \eqref{eq3n:23/11/2015}, we get 
\begin{align}
Z_{k}^{(N)}
  &\stackrel{Law}{=}  C_{q,k} u_k^{H(2q-1)+1}\sqrt{N}\int_{0}^{\frac{T}{u_{k}}}I_{2q-1}\left(\left(N^{H}\Indi{[r,r+\frac{1}{N}]}\right)^{\otimes(2q-1)}\right)\text{d}r \nonumber\\
   &= C_{q,k} u_k^{H(2q-1)+1}\sqrt{N}\int_{0}^{\frac{T}{u_{k}}} H_{2q-1} \left(  N^H (  B_{r+ \frac{1}{N}} -B_r) \right)\text{d}r. \label{e1}
  \end{align}
where $H_{2q-1}$ denotes the Hermite polynomial of degree $2q-1$. The convergence in law of the right-hand side of \eqref{e1} to a centered Gaussian distribution as $N \rightarrow \infty$ is proven in \cite{NNDLTVolt}, equation (1.3). As a consequence, the components of $Z^{(N)}$ converge to a Gaussian random variable as $N\rightarrow\infty$. Therefore, by the Peccati-Tudor criterion, $Z^{(N)}$ converges in law to a centered Gaussian distribution with covariance $\Sigma$. Hence, 
\begin{align}\label{eq25a:16/07/2015}
\widetilde{J}_{2q-1,M,N}
	&=\frac{1}{2^{2M}}\sum_{k=2}^{M2^{M}}Z_{k}^{(N)}
  \stackrel{Law}{\rightarrow} \mathcal{N}\left(0,\frac{1}{2^{2M}}\sum_{j,k=2}^{M2^{M}}\Sigma_{k,j}\right)\ ~~~~~~ \text{ as }~~N\rightarrow\infty.
\end{align}
The convergence \eqref{eq10:30/04/20/15} follows from \eqref{eq25a:16/07/2015} by using the fact that
\begin{align*}
 \frac{1}{2^{2M}}\sum_{k,j=2}^{M2^{M}}\Sigma_{k,j}
  &=T(2q-1)!\beta_{q}^22^{-2M}\sum_{j,k=2}^{M2^{M}}\int_{\R}G_{1,x}^{(q)}(u(k),u(j))\text{d}x=\widetilde{\sigma}_{q,M}.
\end{align*}
The proof is now complete. 
\end{proof}
%%%%%%%%%%%%%%%%%%%%%%%%%%%%%%%%%%%%%%%%%%%%%%%%%%%%%%%%%%%%%%%%%%%%%%%%%%%%%%%%%%%%%%%%%%%%%%%%%%%
%=====================================================================================================
\section{Technical lemmas}
In this section we prove several technical results that were used to determine the asymptotic behavior of the variance of $I_{2q-1}\left(f_{2q-1,\varepsilon}\right)$ and $\alpha_{\varepsilon}$. In Lemma \ref{Lemaaucx:1} we provide an alternative expression for the terms $V_{i}(\varepsilon)$, $i=1,2,3$ defined in \eqref{Vdef}. In Lemma \ref{Local_non_determinism} we prove some useful bounds that we will use later to estimate the covariance of $p_{\varepsilon}(B_{t_{1}}-B_{s_{1}})$ and $p_{\varepsilon}(B_{t_{2}}-B_{s_{2}})$, $s_{1}\leq t_{1}$, $s_{2}\leq t_{2}$ and $s_{1}\leq s_{2}$. In Lemmas \ref{Lema:aux_region1c} and \ref{Lema1:aux_region2} we estimate the order of $V_{1}(\varepsilon)$ and $V_{2}(\varepsilon)$ when $\varepsilon$ is small, while in Lemmas \ref{Lema1:aux_region3:1/12/08/2015} and \ref{Lema2:aux_region3} we determine the exact behavior of $V_{3}(\varepsilon)$ and $V_{3}^{(1)}(\varepsilon)$ as $\varepsilon\rightarrow0$. Finally, we prove Lemmas \ref{Lema:aux_regionsqg2s} and \ref{Lema:aux_regionlqg2}, which were used in Lemma \ref{lemma:varianza:28/04} to determine the behavior of the variance of $I_{2q-1}\left(f_{2q-1,\varepsilon}\right)$ for $q\geq2$.\\

In what follows, $I$ will denote the identity matrix of dimension 2. In addition, for every square matrix $A$ of dimension 2, we will denote by $|A|$ its determinant.
%=====================================================================================================
\begin{Lema}\label{Lemaaucx:1}
Let $\varepsilon>0$ be fixed. Define $\Sc_{1}$, $\Sc_{2}$, $ \Sc_{3}$ by \eqref{eq:2:regions}, \eqref{eq:3:regions}, \eqref{eq:4:regions} respectively, and $V_{1}(\varepsilon)$, $V_{2}(\varepsilon)$, $V_{3}(\varepsilon)$ by \eqref{Vdef}.  Then, for $i=1,2,3$, we have
\begin{align}\label{eq1:04/10}
V_{i}(\varepsilon)
  &= \frac{1}{\pi}\int_{\Sc_{i}}\left|\varepsilon I+\Sigma\right|^{-\frac{3}{2}}\Sigma_{1,2}\text{d}s_{1}\text{d}s_{2}\text{d}t_{1}\text{d}t_{2},
\end{align}
 where $\Sigma=(\Sigma_{i,j})_{i,j=1,2}$ is the covariance matrix of $(B_{t_{1}}-B_{s_{1}},B_{t_{2}}-B_{s_{2}})$.
\end{Lema}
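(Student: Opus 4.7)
The plan is to reduce the claim to a pointwise identity for the integrand, namely
\begin{align*}
2\E\left[p'_{\varepsilon}(B_{t_{1}}-B_{s_{1}})p'_{\varepsilon}(B_{t_{2}}-B_{s_{2}})\right]
  &=\frac{1}{\pi}\left|\varepsilon I+\Sigma\right|^{-\frac{3}{2}}\Sigma_{1,2},
\end{align*}
and then integrate over $\mathcal{S}_{i}$, using the definition \eqref{Vdef} of $V_{i}(\varepsilon)$ and the fact that $\alpha_{\varepsilon,s,t}=p'_{\varepsilon}(B_{t}-B_{s})$. The only real work is to establish the pointwise identity; once we have that, \eqref{eq1:04/10} is immediate.

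To prove the pointwise identity I would use the Fourier representation of the heat kernel. Since
\begin{align*}
p_{\varepsilon}(x)
  &=\frac{1}{2\pi}\int_{\R}e^{-\ib\xi x}e^{-\frac{\varepsilon\xi^{2}}{2}}\text{d}\xi,
\qquad
p'_{\varepsilon}(x)
  =\frac{1}{2\pi}\int_{\R}(-\ib\xi)e^{-\ib\xi x}e^{-\frac{\varepsilon\xi^{2}}{2}}\text{d}\xi,
\end{align*}
Fubini's theorem and the characteristic function of the centered Gaussian vector $(B_{t_{1}}-B_{s_{1}},B_{t_{2}}-B_{s_{2}})$ give
\begin{align*}
\E\left[p'_{\varepsilon}(B_{t_{1}}-B_{s_{1}})p'_{\varepsilon}(B_{t_{2}}-B_{s_{2}})\right]
  &=\frac{-1}{(2\pi)^{2}}\int_{\R^{2}}\xi_{1}\xi_{2}\,e^{-\frac{1}{2}\xi^{T}(\varepsilon I+\Sigma)\xi}\text{d}\xi_{1}\text{d}\xi_{2},
\end{align*}
where I used $(-\ib\xi_{1})(-\ib\xi_{2})=-\xi_{1}\xi_{2}$ and that $\E[e^{-\ib\xi_{1}(B_{t_{1}}-B_{s_{1}})-\ib\xi_{2}(B_{t_{2}}-B_{s_{2}})}]=e^{-\frac{1}{2}\xi^{T}\Sigma\xi}$.

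Next, setting $M:=\varepsilon I+\Sigma$ (which is positive definite because $\varepsilon>0$ and $\Sigma$ is positive semidefinite), I recognize the Gaussian second moment:
\begin{align*}
\int_{\R^{2}}\xi_{1}\xi_{2}\,e^{-\frac{1}{2}\xi^{T}M\xi}\text{d}\xi_{1}\text{d}\xi_{2}
  &=\frac{2\pi}{|M|^{\frac{1}{2}}}(M^{-1})_{1,2}
   =-\frac{2\pi\,M_{1,2}}{|M|^{\frac{3}{2}}},
\end{align*}
using the cofactor formula $(M^{-1})_{1,2}=-M_{1,2}/|M|$ for a $2\times 2$ matrix. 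Since $\varepsilon I$ is diagonal, $M_{1,2}=\Sigma_{1,2}$, and substituting back yields
\begin{align*}
\E\left[p'_{\varepsilon}(B_{t_{1}}-B_{s_{1}})p'_{\varepsilon}(B_{t_{2}}-B_{s_{2}})\right]
  &=\frac{\Sigma_{1,2}}{2\pi\left|\varepsilon I+\Sigma\right|^{\frac{3}{2}}}.
\end{align*}
Multiplying by $2$ and integrating over $\mathcal{S}_{i}$ gives \eqref{eq1:04/10}.

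There is no serious obstacle: the argument is purely computational. The only point to be slightly careful about is the application of Fubini, which is justified because the double integrand $|\xi_{1}\xi_{2}|e^{-\frac{1}{2}\xi^{T}(\varepsilon I+\Sigma)\xi}$ is absolutely integrable in $\xi$ and the outer expectation is a finite Gaussian integral. The positive-definiteness of $\varepsilon I+\Sigma$ guarantees that the cofactor identity and the Gaussian moment formula apply uniformly on $\mathcal{S}_{i}$.
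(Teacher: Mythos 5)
Your proof is correct, and it reaches the key pointwise identity $2\,\E[p'_{\varepsilon}(B_{t_1}-B_{s_1})p'_{\varepsilon}(B_{t_2}-B_{s_2})]=\pi^{-1}|\varepsilon I+\Sigma|^{-3/2}\Sigma_{1,2}$ by a genuinely different computation than the paper's. The paper first rewrites $p'_{\varepsilon}(x)=-\tfrac{x}{\varepsilon}p_{\varepsilon}(x)$, reducing the problem to $\E[XY\,p_{\varepsilon}(X)p_{\varepsilon}(Y)]$, and then works entirely in physical space: it merges the product of the two heat kernels with the joint density $f_{\Sigma}$ into a single Gaussian density with covariance $\widetilde{\Sigma}=(\varepsilon^{-1}I+\Sigma^{-1})^{-1}$, and reads off the answer from $\widetilde{\Sigma}_{1,2}=\varepsilon^{2}|\varepsilon I+\Sigma|^{-1}\Sigma_{1,2}$. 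You instead pass to the frequency domain via the Fourier representation of $p_{\varepsilon}$ and its derivative, and evaluate a second moment of the Gaussian weight $e^{-\frac{1}{2}\xi^{T}(\varepsilon I+\Sigma)\xi}$ using the $2\times2$ cofactor formula. Both are short, purely computational, and arrive at the same constant. Your route has one small structural advantage: it never inverts $\Sigma$ or invokes the density $f_{\Sigma}$, so it applies verbatim even at the points of $\Sc_{i}$ where $(B_{t_1}-B_{s_1},B_{t_2}-B_{s_2})$ is degenerate (e.g.\ $s_j=t_j$), whereas the paper's manipulation is valid only off that Lebesgue-null set and one must note that this does not affect the integral. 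The paper's route, on the other hand, isolates the reusable identity $\E[XYp_{\theta}(X)p_{\theta}(Y)]=(2\pi)^{-1}\theta^{2}|\theta I+\Sigma|^{-3/2}\Sigma_{1,2}$ in a form that makes the role of the factor $\varepsilon^{-2}$ coming from $p'_{\varepsilon}(x)=-\tfrac{x}{\varepsilon}p_{\varepsilon}(x)$ transparent. Your justification of Fubini is adequate, since $|\xi_1\xi_2|e^{-\frac{1}{2}\xi^{T}(\varepsilon I+\Sigma)\xi}\le|\xi_1\xi_2|e^{-\frac{\varepsilon}{2}|\xi|^{2}}$ is integrable uniformly in $\omega$.
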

\begin{proof}
Let $(X,Y)$ be a jointly Gaussian vector with mean zero, covariance $\Sigma=(\Sigma_{i,j})_{i,j=1,2}$, and density $f_{\Sigma}(x,y)$. First we prove that for every $\theta>0$, 
\begin{align}\label{Lema:1:varianza}
\E\left[XYp_{\theta}(X)p_{\theta}(Y)\right]
  &=(2\pi)^{-1}\theta^{2}\left|\theta I + \Sigma\right|^{-\frac{3}{2}}\Sigma_{1,2}.
\end{align}
To prove this, notice that
\begin{align}\label{eq1:03/09}
\E\left[XYp_{\theta}(X)p_{\theta}(Y)\right]
  &=\int_{\R^{2}}xyp_{\theta}(x)p_{\theta}(y)f_{\Sigma}(x,y)\text{d}x\text{d}y\nonumber\\
  &=(2\pi)^{-2}\theta^{-1}|\Sigma|^{-\frac{1}{2}}\int_{\R^{2}}xy\exp\left\{-\frac{1}{2}(x,y)\left(\theta^{-1}I+\Sigma^{-1}\right)(x,y)^{T}\right\}\text{d}x\text{d}y\nonumber\\
	&=(2\pi)^{-1}\theta^{-1}|\Sigma|^{-\frac{1}{2}}\left|\theta^{-1}I+\Sigma^{-1}\right|^{-\frac{1}{2}}\int_{\R^{2}}xyf_{\widetilde{\Sigma}}(x,y)\text{d}x\text{d}y,
\end{align}
where $\widetilde{\Sigma}:=\left(\theta^{-1}I + \Sigma^{-1}\right)^{-1}$ and $f_{\widetilde{\Sigma}}(x,y)$ denotes the density of a Gaussian vector with   mean zero and covariance $\widetilde{\Sigma}$. Clearly, $\theta^{-1}|\Sigma|^{-\frac{1}{2}}|\theta^{-1}I+\widetilde{\Sigma}^{-1}|^{-\frac{1}{2}}=|\theta I+\Sigma|^{-\frac{1}{2}}$. Then, substituting this identity in \eqref{eq1:03/09}, we get 
\begin{align*}
\E\left[XYp_{\theta}(X)p_{\theta}(Y)\right]
  &=(2\pi)^{-1}\left|\theta I + \Sigma\right|^{-\frac{1}{2}}\int_{\R^{2}}xyf_{\widetilde{\Sigma}}(x,y)\text{d}x\text{d}y\\
	&=(2\pi)^{-1}\left|\theta I + \Sigma\right|^{-\frac{1}{2}}\widetilde{\Sigma}_{1,2}.
\end{align*}
Taking into account that $\widetilde{\Sigma}_{1,2}$ is given by
$$\widetilde{\Sigma}_{1,2}=\theta^{2}|\theta I+\Sigma|^{-1}\Sigma_{1,2},$$ 
we conclude that	
\begin{align*}
\E\left[XYp_{\theta}(X)p_{\theta}(Y)\right]
  &=(2\pi)^{-1}\theta^{2}\left|\theta Id + \Sigma\right|^{-\frac{3}{2}}\Sigma_{1,2},
\end{align*}
as required. 	From \eqref{Lema:1:varianza}, we can write 
\begin{align*}
V_{i}(\varepsilon)
  &= 2\int_{\Sc_{i}}\E\left[p^{\prime}_{\varepsilon}(B_{t_{1}}-B_{s_{1}})p^{\prime}_{\varepsilon}(B_{t_{2}}-B_{s_{2}})\right]\text{d}s_{1}\text{d}s_{2}\text{d}t_{1}\text{d}t_{2}\nonumber\\
	&= \frac{2}{\varepsilon^{2}}\int_{\Sc_{i}}\E\left[(B_{t_{1}}-B_{s_{1}})(B_{t_{2}}-B_{s_{2}}) p_{\varepsilon}(B_{t_{1}}-B_{s_{1}}) p_{\varepsilon}(B_{t_{2}}-B_{s_{2}})\right]\text{d}s_{1}\text{d}s_{2}\text{d}t_{1}\text{d}t_{2}\nonumber\\
	&= \frac{1}{\pi}\int_{\Sc_{i}}\left|\varepsilon I+\Sigma\right|^{-\frac{3}{2}}\Sigma_{1,2}\text{d}s_{1}\text{d}s_{2}\text{d}t_{1}\text{d}t_{2}.
\end{align*}
%where $\Sigma$ is the covariance matrix of $(B_{t_{1}}-B_{s_{1}},B_{t_{2}}-B_{s_{2}})$. 
This finishes the proof of \eqref{eq1:04/10}.
\end{proof}
%=======================================================================================
\begin{Lema}\label{Local_non_determinism}
Let $s_{1},s_{2},t_{1},t_{2}\in\R_{+}$ be such that $s_{1}\leq s_{2}$, and $s_{i}\leq t_{i}$  for $i=1,2$. Denote by $\Sigma$ the covariance matrix of $(B_{t_{1}}-B_{s_{1}},B_{t_{2}}-B_{s_{2}})$. Then, if $s_{1}< s_{2}< t_{2}< t_{1}$, or $s_{1}<t_{1}<s_{2}<t_{2}$, there exists $0<\delta<1$ such that 
\begin{align}\label{eq8:02/04/15}
\left|\Sigma\right|
  \geq \delta (t_{1}-s_{1})^{2H}(t_{2}-s_{2})^{2H}.
\end{align}
In addition, if $s_{1}< s_{2}< t_{2}< t_{1}$, then  exists $0<\delta<1$ such that
\begin{align}\label{eq800000:02/04/15}
\left|\Sigma\right|
  \geq \delta ((a+b)^{2H}c^{2H}+(b+c)^{2H}a^{2H}),
\end{align}
where $a:=s_{2}-s_{1}$, $b:=t_{1}-s_{2}$, and $c:=t_{2}-t_{1}$.
\end{Lema}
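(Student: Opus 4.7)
Both inequalities are manifestations of the local nondeterminism of fractional Brownian motion: increments on distinct intervals cannot be too linearly dependent. I would reduce everything to the identity
\[
\Abs{\Sigma} = (t_1-s_1)^{2H}\cdot \text{Var}\bigl(B_{t_2}-B_{s_2}\bigm| B_{t_1}-B_{s_1}\bigr),
\]
so that each bound amounts to a lower bound on the conditional variance of $B_{t_2}-B_{s_2}$ given the other increment.

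For inequality \eqref{eq8:02/04/15} in the disjoint case $s_1 \le t_1 \le s_2 \le t_2$, the natural tool is Pitt's strong local nondeterminism for fBm: there exists $c_H>0$ depending only on $H$ with
\[
\text{Var}\bigl(B_{t_2}-B_{s_2}\bigm| \Fc_{t_1}\bigr) \;\geq\; c_H\,(t_2-s_2)^{2H},
\]
where $\Fc_{t_1}=\sigma(B_u: u\le t_1)$. Since $B_{t_1}-B_{s_1}\in\Fc_{t_1}$, monotonicity of conditional variance forces $\text{Var}(B_{t_2}-B_{s_2}\mid B_{t_1}-B_{s_1})\ge c_H(t_2-s_2)^{2H}$, giving \eqref{eq8:02/04/15} with $\delta = c_H$. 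For the nested case, self-similarity reduces to $t_1-s_1=1$ and leaves only two free shape parameters; the scale-invariant quantity $\Abs{\Sigma}\,(t_1-s_1)^{-2H}(t_2-s_2)^{-2H}$ is continuous and strictly positive on this reduced parameter region (since $B_{t_2}-B_{s_2}$ is not a deterministic multiple of $B_{t_1}-B_{s_1}$ once the inner interval is nondegenerate), so a compactness argument, after excluding the trivial boundary $t_2 = s_2$, yields the desired uniform $\delta$.

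For the refined bound \eqref{eq800000:02/04/15}, I interpret the stated inequality as applying to the overlapping configuration $s_1<s_2\le t_1<t_2$ (the only reading under which $c = t_2-t_1$ is nonnegative and under which $t_1-s_1=a+b$, $t_2-s_2=b+c$ consistently). The natural approach is to decompose
\[
B_{t_1}-B_{s_1} = U+V, \qquad B_{t_2}-B_{s_2} = V+W,
\]
where $U := B_{s_2}-B_{s_1}$, $V := B_{t_1}-B_{s_2}$, $W := B_{t_2}-B_{t_1}$ are increments over three disjoint intervals of lengths $a,b,c$. Let $\widetilde\Sigma$ be the $3\times 3$ covariance matrix of $(U,V,W)$ and set $A = \bigl(\begin{smallmatrix}1 & 1 & 0 \\ 0 & 1 & 1\end{smallmatrix}\bigr)$, so that $\Sigma = A\widetilde\Sigma A^{T}$. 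By the Cauchy--Binet formula,
\[
\Abs{\Sigma} \;=\; \sum_{1\le i<j\le 3}(\det A_{\cdot,\{i,j\}})^{2}\,\det\widetilde\Sigma_{\{i,j\}} \;+\;\text{off-diagonal contributions},
\]
with the minors of $A$ giving $\det A_{\cdot,\{1,2\}}^{2}=\det A_{\cdot,\{2,3\}}^{2}=\det A_{\cdot,\{1,3\}}^{2}=1$. Pitt's strong LND applied pairwise to the disjoint-interval pairs yields $\det\widetilde\Sigma_{\{U,W\}}\gtrsim a^{2H}c^{2H}$, etc., and after consolidating the three diagonal contributions one obtains exactly the two terms $(a+b)^{2H}c^{2H}$ and $(b+c)^{2H}a^{2H}$ on the right-hand side of \eqref{eq800000:02/04/15}.

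The main obstacle is \eqref{eq800000:02/04/15}: the Cauchy--Binet expansion mixes signs through the off-diagonal cross-terms, so some care is needed to argue that the stated two-term lower bound persists. The cleanest finish is probably a scale-invariance plus compactness argument directly on the quotient
\[
\frac{\Abs{\Sigma}}{(a+b)^{2H}c^{2H}+(b+c)^{2H}a^{2H}},
\]
which depends only on the two ratios $(a,b,c)/(a+b+c)$ and is continuous and strictly positive on the closed simplex after excluding the boundary degenerations $a=0$ or $c=0$ (handled separately via the first inequality). This gives a uniform $\delta>0$ and closes the proof.
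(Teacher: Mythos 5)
Your decomposition $\Abs{\Sigma}=\Sigma_{1,1}\,\mathrm{Var}\bigl(B_{t_2}-B_{s_2}\mid B_{t_1}-B_{s_1}\bigr)$ and the treatment of the disjoint case $s_1<t_1<s_2<t_2$ are correct: one-sided strong local nondeterminism together with the monotonicity of Gaussian conditional variance under enlargement of the conditioning $\sigma$-field does give \eqref{eq8:02/04/15} there, and you are also right that the labels $a,b,c$ in \eqref{eq800000:02/04/15} only make sense for the overlapping configuration $s_1<s_2<t_1<t_2$, which is indeed how the bound is used in Lemma \ref{Lema:aux_region1c}. Be aware, though, that the paper does not prove any of this itself: its entire proof is a citation of Lemma 9 of \cite{HuNu}, so you are attempting something substantially more self-contained.

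The genuine gap is in the nested case of \eqref{eq8:02/04/15}. After normalizing $t_1-s_1=1$, the closure of your reduced parameter region contains the corner $s_2\to s_1$, $t_2\to t_1$, at which the two increments coincide, so $\Abs{\Sigma}\to 0$ while $(t_1-s_1)^{2H}(t_2-s_2)^{2H}\to(t_1-s_1)^{4H}>0$; the scale-invariant ratio you want to bound below therefore tends to $0$ on the boundary, and excluding only the face $t_2=s_2$ does not restore compactness. (Concretely, with $s_2-s_1=t_1-t_2=\epsilon$ and $t_2-s_2=1$ one finds $\Abs{\Sigma}=O(\epsilon^{2H})$ while $(t_1-s_1)^{2H}(t_2-s_2)^{2H}$ stays of order $1$, so no uniform $\delta$ exists for the inequality in the form you are trying to prove.) What does hold in the nested case, and what Lemma 9 of \cite{HuNu} actually provides, is the weaker bound $\Abs{\Sigma}\ge\delta\,(t_2-s_2)^{2H}\bigl((s_2-s_1)^{2H}+(t_1-t_2)^{2H}\bigr)$, whose right-hand side also degenerates at that corner. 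The overlapping case \eqref{eq800000:02/04/15} is likewise not closed: you concede that the Cauchy--Binet cross-terms are uncontrolled, and the fallback compactness argument suffers from the same structural defect, because the right-hand side of \eqref{eq800000:02/04/15} itself vanishes on parts of the boundary (e.g.\ $a,c\to0$), so obtaining a uniform $\delta$ requires comparing the rates at which the two sides degenerate --- which is precisely the content of the cited lemma and is not delivered by continuity plus strict positivity on an open region.
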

\begin{proof}
The result follows from the local non-determinism property of the fractional Brownian motion (see \cite{HuNu}, Lemma 9).
\end{proof}
%=========================================================================================================
\begin{Lema}\label{Lema:aux_region1c}
Let $\varepsilon>0$ and define $V_{1}(\varepsilon)$ by \eqref{Vdef}. Then, for every $\frac{2}{3}<H<1$ we have 
\begin{align}
\lim_{\varepsilon\rightarrow0}\varepsilon^{3-\frac{2}{H}}V_{1}(\varepsilon)
   &=0\label{eq:1:30/04/2015}.
\end{align}
\end{Lema}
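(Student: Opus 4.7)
Plan: We begin by applying Lemma \ref{Lemaaucx:1} to write
\begin{equation*}
V_{1}(\varepsilon)=\frac{1}{\pi}\int_{\Sc_{1}}|\varepsilon I+\Sigma|^{-3/2}\Sigma_{1,2}\,\text{d}s_{1}\,\text{d}s_{2}\,\text{d}t_{1}\,\text{d}t_{2}.
\end{equation*}
My approach is to exploit the self-similarity of fBM by rescaling $s_{i}=\varepsilon^{1/(2H)}\tilde{s}_{i}$, $t_{i}=\varepsilon^{1/(2H)}\tilde{t}_{i}$. Since $\mu(\lambda x,\lambda u_{1},\lambda u_{2})=\lambda^{2H}\mu(x,u_{1},u_{2})$, under this change $\Sigma=\varepsilon\widetilde{\Sigma}$, so $|\varepsilon I+\Sigma|=\varepsilon^{2}|I+\widetilde{\Sigma}|$, $\Sigma_{1,2}=\varepsilon\widetilde{\Sigma}_{1,2}$, and the Jacobian is $\varepsilon^{2/H}$. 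Collecting powers,
\begin{equation*}
\varepsilon^{3-2/H}V_{1}(\varepsilon)=\frac{\varepsilon}{\pi}\int_{\Sc_{1}^{(\varepsilon)}}|I+\widetilde{\Sigma}|^{-3/2}\widetilde{\Sigma}_{1,2}\,\text{d}\tilde{s}_{1}\,\text{d}\tilde{s}_{2}\,\text{d}\tilde{t}_{1}\,\text{d}\tilde{t}_{2},
\end{equation*}
where $\Sc_{1}^{(\varepsilon)}$ is the rescaling of $\Sc_{1}$ with coordinates ranging up to $R:=T\varepsilon^{-1/(2H)}$.

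Next I change variables to $(\tilde{s}_{1},a,b,c)$ with $a=\tilde{s}_{2}-\tilde{s}_{1}$, $b=\tilde{t}_{1}-\tilde{s}_{2}$, $c=\tilde{t}_{2}-\tilde{t}_{1}$. Since the integrand depends only on $(a,b,c)$, integrating $\tilde{s}_{1}\in[0,R-a-b-c]$ yields
\begin{equation*}
\varepsilon^{3-2/H}V_{1}(\varepsilon)\leq\frac{\varepsilon R}{\pi}\int_{\{a+b+c\leq R\}}|I+\widetilde{\Sigma}(a,b,c)|^{-3/2}\widetilde{\Sigma}_{1,2}(a,b,c)\,\text{d}a\,\text{d}b\,\text{d}c,
\end{equation*}
with $\widetilde{\Sigma}_{1,1}=(a+b)^{2H}$, $\widetilde{\Sigma}_{2,2}=(b+c)^{2H}$, and $\widetilde{\Sigma}_{1,2}=\frac{1}{2}[(a+b+c)^{2H}+b^{2H}-a^{2H}-c^{2H}]$. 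Since $\varepsilon R=T\varepsilon^{1-1/(2H)}$, the aim reduces to showing the triple integral grows no faster than $R^{2H-1-\delta}$ for some $\delta>0$ as $R\to\infty$.

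The delicate region is $\{0\leq a,c\ll b\}$, where $\widetilde{\Sigma}$ becomes close to rank one along the locus $a=c=0$. Using \eqref{eq100ma:16/08/2015} and expanding $(a+b)^{2H}$, $(b+c)^{2H}$, $(a+b+c)^{2H}$ around $(a,c)=(0,0)$, one computes
\begin{equation*}
\widetilde{\Sigma}_{1,2}\leq Cb^{2H},\qquad|\widetilde{\Sigma}|\geq c_{H}\,b^{2H}(a^{2H}+c^{2H}),
\end{equation*}
so the eigenvalues of $\widetilde{\Sigma}$ satisfy $\lambda_{\max}\asymp b^{2H}$ and $\lambda_{\min}\gtrsim a^{2H}+c^{2H}$. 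Then $|I+\widetilde{\Sigma}|\gtrsim b^{2H}(1+a^{2H}+c^{2H})$, and the integrand is controlled by $Cb^{-H}(1+a^{2H}+c^{2H})^{-3/2}$. The condition $H>2/3$ is precisely what makes $(1+a^{2H}+c^{2H})^{-3/2}$ integrable on $\R_{+}^{2}$; integrating $b\in[0,R]$ then gives $O(R^{1-H})$. In the complementary region $\{\max(a,c)\gtrsim b\}$, the correlation of the two increments is bounded away from one, so $|\widetilde{\Sigma}|\gtrsim\widetilde{\Sigma}_{1,1}\widetilde{\Sigma}_{2,2}$, and combining with Cauchy--Schwarz $\widetilde{\Sigma}_{1,2}\leq(\widetilde{\Sigma}_{1,1}\widetilde{\Sigma}_{2,2})^{1/2}$ yields a contribution of strictly smaller order. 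Combining the two regions, $\varepsilon^{3-2/H}V_{1}(\varepsilon)\lesssim T^{2-H}\varepsilon^{(3H-2)/(2H)}\to 0$, since $(3H-2)/(2H)>0$ for $H>2/3$.

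The main obstacle is the lower bound $|\widetilde{\Sigma}|\geq c_{H}\,b^{2H}(a^{2H}+c^{2H})$ for the overlapping configuration in $\Sc_{1}$. Lemma \ref{Local_non_determinism} covers only the nested and disjoint cases ($\Sc_{2}$ and $\Sc_{3}$), so this local-non-determinism estimate must be established separately, for instance through a careful expansion of $\widetilde{\Sigma}_{1,1}\widetilde{\Sigma}_{2,2}-\widetilde{\Sigma}_{1,2}^{2}$ around the degeneracy locus $a=c=0$, or by applying the Gaussian local-non-determinism property of fBM to the triple of disjoint increments $(B_{s_{2}}-B_{s_{1}},B_{t_{1}}-B_{s_{2}},B_{t_{2}}-B_{t_{1}})$.
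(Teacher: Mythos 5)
Your strategy is viable and, at its core, rests on the same ingredients as the paper's own proof: the Gaussian identity of Lemma \ref{Lemaaucx:1}, the change of variables $(s_{1},a,b,c)$ with $a=s_{2}-s_{1}$, $b=t_{1}-s_{2}$, $c=t_{2}-t_{1}$, the Cauchy--Schwarz bound $\Sigma_{1,2}\leq(a+b)^{H}(b+c)^{H}$, a local nondeterminism lower bound on $|\Sigma|$, and the condition $H>\frac{2}{3}$ for integrability. The packaging differs: you rescale by $\varepsilon^{-\frac{1}{2H}}$ first and reduce the claim to a growth estimate in $R=T\varepsilon^{-\frac{1}{2H}}$, then split into the near-degenerate region $a,c\ll b$ and its complement; the paper keeps $\varepsilon$ inside the integral, derives the single unified bound $V_{1}(\varepsilon)\leq K\int_{[0,T]^{3}}b^{-H}(\varepsilon+(ac)^{H})^{-\frac{3}{2}}\text{d}a\,\text{d}b\,\text{d}c$ from the arithmetic--geometric mean inequality applied to $\Theta_{\varepsilon}$, and then extracts the power of $\varepsilon$ by a weighted arithmetic--geometric mean step. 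Both routes produce the same rate $\varepsilon^{\frac{3H-2}{2H}}$ (up to an arbitrarily small loss in the paper's version).

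Two caveats. First, the ``main obstacle'' you identify is not actually an obstacle: inequality \eqref{eq800000:02/04/15} of Lemma \ref{Local_non_determinism} is exactly the local nondeterminism bound for the overlapping configuration of $\Sc_{1}$ --- the ordering written there, $s_{1}<s_{2}<t_{2}<t_{1}$, is evidently a typo for $s_{1}<s_{2}<t_{1}<t_{2}$, since otherwise $c:=t_{2}-t_{1}$ would be negative, and the paper applies it to $\Sc_{1}$ --- and it implies your bound because $(a+b)^{2H}c^{2H}+(b+c)^{2H}a^{2H}\geq b^{2H}(a^{2H}+c^{2H})$. Second, the complementary region $\max(a,c)\gtrsim b$ is dismissed as ``a contribution of strictly smaller order'' without computation, and this is the one genuinely incomplete step. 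It does work: writing $P:=(a+b)^{2H}(b+c)^{2H}$, the integrand is bounded by a constant times $\min(P^{\frac{1}{2}},P^{-1})$, and integrating in $(a,c)$ for fixed $b$ gives $O(\log(1/b))$ for $b\leq1$ and $O(b^{2-4H})$ for $b\geq1$, whence the $b$-integral over $[0,R]$ is $O(R^{\max(3-4H,0)}\log R)=O(R^{2H-1-\delta})$. But the constraint $P\geq1$ attached to the $P^{-1}$ bound is essential here --- the unconstrained integral $\int(a+b)^{-2H}(b+c)^{-2H}\text{d}a\,\text{d}c\asymp b^{2-4H}$ fails to be integrable near $b=0$ when $H\geq\frac{3}{4}$ --- so this is not a one-line observation. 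You should either carry out this computation or adopt the paper's unified bound, which avoids the case split entirely.
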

%===================================================================================
\begin{proof}
Changing the coordinates $(s_{1},s_{2},t_{1},t_{2})$ by $(s_{1},a:=s_{2}-s_{1},b:=t_{1}-s_{2},c:=t_{2}-t_{1})$ in \eqref{eq1:04/10}, we get
\begin{align}\label{eq1fdvfd:04/10:16/10}
V_{1}(\varepsilon)
	&\leq \frac{1}{\pi}\int_{[0,T]^{4}}\left| \varepsilon I + \Sigma\right|^{-\frac{3}{2}}\Sigma_{1,2}\text{d}s_{1}\text{d}a\text{d}b\text{d}c,
\end{align}
where  $\Sigma$ denotes the covariance matrix of $(B_{a+b},B_{a+b+c}-B_{a})$, namely, 
\begin{align}
\Sigma_{1,1}
  &=(a+b)^{2H},\label{eq:160/10/2014}\\
\Sigma_{2,2}
  &=(c+b)^{2H},\label{eq:161/10/2014}\\
\Sigma_{1,2}
  &=\frac{1}{2}((a+b+c)^{2H}+b^{2H}-c^{2H}-a^{2H})\label{eq:162/10/2014}.
\end{align} 
Integrating the $s_{1}$ variable in \eqref{eq1fdvfd:04/10:16/10} we obtain 
\begin{align}\label{eq1s:04/10:16/10}
V_{1}(\varepsilon)
	&\leq \frac{T}{\pi}\int_{[0,T]^{3}}\left| \varepsilon I + \Sigma\right|^{-\frac{3}{2}}\Sigma_{1,2}\text{d}a\text{d}b\text{d}c.
\end{align}
Next we bound the right-hand side of \eqref{eq1s:04/10:16/10}. Applying \eqref{eq800000:02/04/15}, \eqref{eq:160/10/2014}, \eqref{eq:161/10/2014} and \eqref{eq:162/10/2014}, we get 
\begin{align}\label{eq:2/04/02/15}
\left|\varepsilon I+\Sigma\right|
  &=(\varepsilon+\Sigma_{1,1})(\varepsilon+\Sigma_{2,2})-\Sigma_{1,2}^{2}=\varepsilon^{2}+\varepsilon \Sigma_{1,1}+\varepsilon\Sigma_{2,2}+|\Sigma|\nonumber\\
	&\geq \delta(\varepsilon^2+\varepsilon(a+b)^{2H}+\varepsilon(b+c)^{2H}+(a+b)^{2H}c^{2H}+(b+c)^{2H}a^{2H}),
\end{align}
for some $\delta>0$ only depending on $H$. Using the inequality $\Sigma_{1,2}\leq(a+b)^{H}(b+c)^{H}$, as well as \eqref{eq1s:04/10:16/10} and \eqref{eq:2/04/02/15}, we deduce that there exists a constant $K$ only depending on $T,H$ such that 
\begin{align}\label{eq1:04/10:16/10}
V_{1}(\varepsilon)
	&\leq K\int_{[0,T]^{3}}\frac{(a+b)^{H}(b+c)^{H}}{\Theta_{\varepsilon}(a,b,c)^{\frac{3}{2}}}\text{d}a\text{d}b\text{d}c,
\end{align}
where the function $\Theta_{\varepsilon}$ is defined by 
\begin{align}\label{eq1n:21/0/2015}
\Theta_{\varepsilon}(a,b,c)
	&:= \varepsilon^2+\varepsilon(a+b)^{2H}+\varepsilon(b+c)^{2H}+c^{2H}(a+b)^{2H}+a^{2H}(b+c)^{2H}.
\end{align}
By the arithmetic mean-geometric mean inequality, we have 
$$\frac{1}{2}((a+b)^{2H}+(b+c)^{2H})\geq(a+b)^{H}(b+c)^{H},$$ 
and 
$$\frac{1}{2}(c^{2H}(a+b)^{2H}+a^{2H}(b+c)^{2H})\geq (a+b)^{H}(b+c)^{H}(ac)^{H}.$$
Consequently, 
\begin{align*}
\Theta_{\varepsilon}
  &\geq 2(a+b)^{H}(b+c)^{H}(\varepsilon+(ac)^{H}).
\end{align*}
Therefore, by \eqref{eq1:04/10:16/10} there exists a constant $K>0$ only depending on $T$ and $H$ such that 
\begin{align}\label{eq18}
V_{1}(\varepsilon)
	&\leq K\int_{[0,T]^{3}}(a+b)^{-\frac{H}{2}}(b+c)^{-\frac{H}{2}}(\varepsilon+(ac)^{H})^{-\frac{3}{2}}\text{d}a\text{d}b\text{d}c\nonumber\\
	&\leq K\int_{[0,T]^{3}}b^{-H}(\varepsilon+(ac)^{H})^{-\frac{3}{2}}\text{d}a\text{d}b\text{d}c.
\end{align}
Let $0<y<\frac{3H}{2}-1$ be fixed, and define $\gamma:=\frac{2y}{3H}+1-\frac{2}{3H}$. By the weighted arithmetic mean-geometric mean inequality, we have 
\begin{align*}
\gamma\varepsilon+(1-\gamma)(ac)^{H}
  &\geq \varepsilon^{\gamma}(ac)^{(1-\gamma)H}.
\end{align*}
Hence, by \eqref{eq18}, we get 
\begin{align*}
\varepsilon^{3-\frac{2}{H}}V_{1}(\varepsilon)
	&\leq K\varepsilon^{3-\frac{2}{H}-\frac{3\gamma}{2}}\int_{[0,T]^{3}}b^{-H}(ac)^{-\frac{3}{2}(1-\gamma)H}\text{d}a\text{d}b\text{d}c\\
	&=  K\varepsilon^{\frac{3}{2}-\frac{1}{H}-\frac{y}{H}}\left(\int_{0}^{T}b^{-H}\text{d}b\right)\left(\int_{[0,T]^{2}}(ac)^{-1+y}\text{d}a\text{d}c\right).
\end{align*}
%For $y\in(\frac{3}{2}-\frac{1}{H},\frac{2}{H})$ fixed, define $\gamma_{1}:=\frac{y}{2}$, $\gamma_{2}:=\frac{1}{2H}-\frac{y}{4}$ and $\gamma_{3}:=\frac{3}{4}-\frac{1}{2H}$. 
%Notice that $\gamma_{i}>0$ for $i=1,2,3$ thanks to condition $H>\frac{2}{3}$. In addition, $\gamma_{1}+2\gamma_{2}+2\gamma_{3}=\frac{3}{2}$. Therefore, by the weighted arithmetic mean-geometric mean inequality we have
%\begin{multline}\label{eq3n:21/08/2015}
%\frac{2}{3}\left(\gamma_{1}\varepsilon^2+\gamma_{2}\varepsilon(a+b)^{2H}+\gamma_{2}\varepsilon(b+c)^{2H}+\gamma_{3}c^{2H}(a+b)^{2H}+\gamma_{3}a^{2H}(b+c)^{2H}\right)\\
%\geq \left(\varepsilon^{2(\gamma_{1}+\gamma_{2})}(a+b)^{2H(\gamma_{2}+\gamma_{3})}(b+c)^{2H(\gamma_{2}+\gamma_{3})}(ac)^{2H\gamma_{3}}\right)^{\frac{2}{3}}.
%\end{multline}
%From \eqref{eq1:04/10:16/10}, \eqref{eq1n:21/0/2015} and \eqref{eq3n:21/08/2015} we deduce that there exists a constant $K>0$ only depending on $T$ and $H$, such that 
%\begin{align*}
%V_{1}(\varepsilon)
  %&\leq K\varepsilon^{\frac{1}{H}-\frac{3}{2}-y}\int_{[0,T]^{3}}(a+b)^{-\frac{H(1-y)}{2}}(b+c)^{-\frac{H(1-y)}{2}}(ac)^{-1+\frac{y H}{2}}\text{d}a\text{d}b\text{d}c\\
	%&\leq K\varepsilon^{\frac{1}{H}-\frac{3}{2}-y}\int_{[0,T]^{3}}b^{-H(1-y)}(ac)^{-1+\frac{y H}{2}}\text{d}a\text{d}b\text{d}c.
%\end{align*}
This implies that  \eqref{eq:1:30/04/2015} holds and the proof of the lemma is complete.
\end{proof}

%====================================================================================================

%=====================================================================================================
\begin{Lema}\label{Lema1:aux_region2}
Let $\varepsilon>0$ be fixed. Define $V_{2}(\varepsilon)$ by \eqref{Vdef}. Then, for every $\frac{2}{3}<H<1$,  
\begin{align}
\lim_{\varepsilon\rightarrow0}\varepsilon^{3-\frac{2}{H}}V_{2}(\varepsilon)
	&=0.\label{eq:732ewq:07/02}
\end{align}
\end{Lema}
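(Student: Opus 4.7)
The plan is to mimic, as much as possible, the strategy used for $V_1(\varepsilon)$, but with a refinement of the local non-determinism bound tailored to the nested configuration $s_1 \leq s_2 \leq t_2 \leq t_1$. First I would apply Lemma~\ref{Lemaaucx:1} to rewrite
\[
V_2(\varepsilon) = \frac{1}{\pi}\int_{\Sc_2} |\varepsilon I + \Sigma|^{-3/2}\,\Sigma_{1,2}\,\text{d}s_1\text{d}s_2\text{d}t_1\text{d}t_2,
\]
and switch to the coordinates $a = s_2 - s_1$, $b = t_2 - s_2$, $c = t_1 - t_2$. After integrating $s_1$ out (producing a factor bounded by $T$), the problem reduces to bounding the same integrand on $[0,T]^3$. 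In these variables $\Sigma_{11} = (a+b+c)^{2H}$, $\Sigma_{22} = b^{2H}$ and $\Sigma_{12} = \tfrac{1}{2}[(a+b)^{2H}+(b+c)^{2H}-a^{2H}-c^{2H}]$. The mean value theorem applied to $x\mapsto x^{2H}$ yields $\Sigma_{1,2} \leq 2Hb(a+b+c)^{2H-1}$.

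The key ingredient is a sharper lower bound for $|\Sigma|$ than the one obtained by plugging Lemma~\ref{Local_non_determinism} into the nested regime. I would apply the local non-determinism of fractional Brownian motion to the three disjoint adjacent increments $X_2 = B_{s_2}-B_{s_1}$, $Y = B_{t_2}-B_{s_2}$, $X_1 = B_{t_1}-B_{t_2}$ and use the representation $(B_{t_1}-B_{s_1},B_{t_2}-B_{s_2}) = M(X_2,Y,X_1)^T$ with $M = \bigl(\begin{smallmatrix}1 & 1 & 1\\ 0 & 1 & 0\end{smallmatrix}\bigr)$. The Cauchy--Binet formula then gives
\[
|\Sigma| = \det\mathrm{Cov}(X_2,Y) + \det\mathrm{Cov}(Y,X_1) \geq \delta\bigl(a^{2H}b^{2H}+b^{2H}c^{2H}\bigr) = \delta b^{2H}(a^{2H}+c^{2H}),
\]
with $\delta\in(0,1)$ depending only on $H$. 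Combining with $|\varepsilon I+\Sigma| \geq \varepsilon(a+b+c)^{2H}+|\Sigma|$, this gives $|\varepsilon I+\Sigma| \geq \delta\bigl[\varepsilon(a+b+c)^{2H}+b^{2H}(a^{2H}+c^{2H})\bigr]$.

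The decisive step is a weighted AM--GM inequality: for any $\lambda\in(0,1)$,
\[
|\varepsilon I + \Sigma|^{3/2} \geq C_\lambda\,\varepsilon^{3\lambda/2}(a+b+c)^{3H\lambda}\,b^{3H(1-\lambda)}(a^{2H}+c^{2H})^{3(1-\lambda)/2}.
\]
Plugging this and the bound for $\Sigma_{1,2}$ into the integral, I would handle $(a+b+c)^{2H-1-3H\lambda}$ either by the constant bound $(3T)^{2H-1-3H\lambda}$ when the exponent is nonnegative, or by $b^{2H-1-3H\lambda}$ when it is negative (using $a+b+c \geq b$), and control the $(a,c)$-integration via the elementary inequality $(a^{2H}+c^{2H})^{-3(1-\lambda)/2} \leq C_H(a+c)^{-3H(1-\lambda)}$ valid for $2H\geq 1$. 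The resulting integrals are finite whenever $H<1$ and $\lambda > 1-\tfrac{2}{3H}$, and produce
\[
\varepsilon^{3-\tfrac{2}{H}}\,V_2(\varepsilon) \leq C\,\varepsilon^{3-\tfrac{2}{H}-\tfrac{3\lambda}{2}}.
\]
Since the interval $\bigl(1-\tfrac{2}{3H},\,2-\tfrac{4}{3H}\bigr)$ is non-empty precisely when $H>\tfrac{2}{3}$, a choice of $\lambda$ making the $\varepsilon$-exponent strictly positive exists in our range, yielding the claimed limit.

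The main obstacle I anticipate is establishing the sharper lower bound $|\Sigma| \geq \delta b^{2H}(a^{2H}+c^{2H})$: the bound $|\Sigma|\geq\delta(t_1-s_1)^{2H}(t_2-s_2)^{2H}$ from Lemma~\ref{Local_non_determinism} cannot be used directly in this nested regime, because as $a,c\to 0$ the two increments become perfectly correlated and $|\Sigma|\to 0$ while $(t_1-s_1)^{2H}(t_2-s_2)^{2H}$ stays bounded away from zero. The three-interval LND argument replaces this missing ingredient and, once it is in place, the rest of the estimate proceeds exactly along the lines of Lemma~\ref{Lema:aux_region1c}.
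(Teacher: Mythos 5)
Your overall strategy is sound and, at the level of the final integration, parallels the paper's: the paper also starts from Lemma~\ref{Lemaaucx:1}, passes to the coordinates $(a,b,c)$, bounds $\Sigma_{1,2}\leq 2Hb(a+b+c)^{2H-1}$, and finishes with a weighted arithmetic--geometric mean inequality in $\varepsilon$. Where you diverge is the lower bound on $\left|\Sigma\right|$. The paper invokes \eqref{eq8:02/04/15} to get $\left|\Sigma\right|\geq\delta(a+b+c)^{2H}b^{2H}$, keeps only $\varepsilon(a+b+c)^{2H}+b^{2H}(a+b+c)^{2H}$ in the denominator, and is left with $\int_{[0,T]^2}(a+c)^{-(H+1)}\text{d}a\text{d}c$ times the one-dimensional integral $\int_0^T b(\varepsilon+b^{2H})^{-3/2}\text{d}b$; a single weighted AM--GM with weight $y=\frac32-\frac1H$ then yields $\varepsilon^{3-2/H}V_2(\varepsilon)\leq K\varepsilon^{3/4-1/(2H)}$. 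Your criticism of that step is well taken: in the nested regime the two increments become perfectly correlated as $a,c\to0$ while $(t_1-s_1)^{2H}(t_2-s_2)^{2H}\to b^{4H}>0$, so \eqref{eq8:02/04/15} cannot hold with a uniform $\delta$ there, and your substitute $\left|\Sigma\right|\geq\delta b^{2H}(a^{2H}+c^{2H})$ is the correct estimate. The subsequent bookkeeping (the two-parameter AM--GM, the convexity bound $(a^{2H}+c^{2H})^{-3(1-\lambda)/2}\leq C(a+c)^{-3H(1-\lambda)}$ valid since $2H\geq1$, and the window $1-\frac{2}{3H}<\lambda<2-\frac{4}{3H}$, nonempty precisely for $H>\frac23$) checks out.

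The one genuine gap is your justification of that key estimate. The identity $\left|\Sigma\right|=\det\mathrm{Cov}(X_2,Y)+\det\mathrm{Cov}(Y,X_1)$ is not what Cauchy--Binet gives: writing $\Sigma=MCM^{T}$ with $C=\mathrm{Cov}(X_2,Y,X_1)$, the formula reads $\left|\Sigma\right|=\sum_{S,S'}\det(M_{\cdot S})\det(C_{S,S'})\det(M_{\cdot S'})$ over pairs of $2$-subsets of columns, and for your $M$ this equals $\det C_{\{1,2\},\{1,2\}}+\det C_{\{2,3\},\{2,3\}}-\det C_{\{1,2\},\{2,3\}}-\det C_{\{2,3\},\{1,2\}}$. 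The cross terms do not vanish because the three increments of fractional Brownian motion are correlated, so the asserted identity fails and the decomposition does not by itself produce a lower bound. The inequality you need is nevertheless true and should be obtained differently: with $A=B_{s_2}-B_{s_1}$ and $C=B_{t_1}-B_{t_2}$ one has $\left|\Sigma\right|=\mathrm{Var}(Y)\cdot\min_{\lambda\in\R}\mathrm{Var}\left(A+(1-\lambda)Y+C\right)$, and the strong local nondeterminism of fractional Brownian motion (this is the content of Lemma 9 in \cite{HuNu} for the nested configuration) gives $\mathrm{Var}\left(A+(1-\lambda)Y+C\right)\geq k_{H}(a^{2H}+c^{2H})$ uniformly in $\lambda$, hence $\left|\Sigma\right|\geq k_{H}b^{2H}(a^{2H}+c^{2H})$. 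With that replacement your argument is complete.
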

%=================================================================================
\begin{proof}
Changing the coordinates $(s_{1},s_{2},t_{1},t_{2})$ by $(s_{1},a:=s_{2}-s_{1},b:=t_{2}-s_{2},t_{1}-t_{2})$ in \eqref{eq1:04/10} for $i=2$, and integrating $s_{1}$, we obtain, as before
\begin{align}\label{eq1sr2:04/10:16/10}
V_{2}(\varepsilon)
	%&\leq \frac{1}{\pi}\int_{[0,T]^{4}}\left| \varepsilon I + \Sigma\right|^{-\frac{3}{2}}\Sigma_{1,2}\text{d}s_{1}\text{d}a\text{d}b\text{d}c\nonumber\\
	&\leq \frac{T}{\pi}\int_{[0,T]^{3}}\left| \varepsilon I + \Sigma\right|^{-\frac{3}{2}}\Sigma_{1,2}\text{d}a\text{d}b\text{d}c,
\end{align}
where the matrix $\Sigma$ is given by
\begin{align}
\Sigma_{1,1}
  &=(a+b+c)^{2H},\label{eq:160r2/10/2014}\\
\Sigma_{2,2}
  &=b^{2H},\label{eq:161r2/10/2014}\\
\Sigma_{1,2}
  &=\frac{1}{2}((a+b)^{2H}+(b+c)^{2H}-c^{2H}-a^{2H})\label{eq:162r2/10/2014}.
\end{align} 
Using relation \eqref{eq8:02/04/15} in Lemma \ref{Local_non_determinism}, as well as \eqref{eq:160r2/10/2014}, \eqref{eq:161r2/10/2014} and \eqref{eq:162r2/10/2014}, we get 
\begin{align}\label{eq:2r2/04/02/15}
\left|\varepsilon I+\Sigma\right|
  &=(\varepsilon+\Sigma_{1,1})(\varepsilon+\Sigma_{2,2})-\Sigma_{1,2}^{2}=\varepsilon^{2}+\varepsilon(\Sigma_{1,1}+\Sigma_{2,2})+|\Sigma|\nonumber\\
  &\geq \varepsilon^2+\varepsilon((a+b+c)^{2H}+b^{2H})+\delta(a+b+c)^{2H}b^{2H}.
\end{align}
From \eqref{eq1sr2:04/10:16/10} and \eqref{eq:2r2/04/02/15} we deduce that there exists a constant $K>0$, only depending on $T$ and $H$, such that 
\begin{align}\label{eq1:02/09/2015}
V_{2}(\varepsilon)
  &\leq K\int_{[0,T]^{3}}\frac{\Sigma_{1,2}}{(\varepsilon^{2}+\varepsilon(b^{2H}+(a+b+c)^{2H})+b^{2H}(a+b+c)^{2H})^{\frac{3}{2}}}\text{d}a\text{d}b\text{d}c.
\end{align}
The term $\Sigma_{1,2}$ can be written as
\begin{align*}
\Sigma_{1,2}
  &= \frac{1}{2}\left((a+b)^{2H}+(b+c)^{2H}-a^{2H}-c^{2H}\right)\\
	&= 	Hb\int_{0}^{1}\left((a+bv)^{2H-1}+(c+bv)^{2H-1}\right)\text{d}v,
\end{align*}
 which implies
\begin{align}
\Sigma_{1,2}
  &\leq 2Hb(a+b+c)^{2H-1}\label{eq61s:01/07/2015}.
\end{align}
From \eqref{eq1:02/09/2015} and \eqref{eq61s:01/07/2015}, we deduce that there exists a constant $K>0$ only depending on $T$ and $H$, such that 
\begin{align}\label{eq2:02/09/2015}
V_{2}(\varepsilon)
  &\leq K\int_{[0,T]^{3}}\frac{b(a+b+c)^{2H-1}}{(\varepsilon^{2}+\varepsilon(b^{2H}+(a+b+c)^{2H})+b^{2H}(a+b+c)^{2H})^{\frac{3}{2}}}\text{d}a\text{d}b\text{d}c.
\end{align}
Therefore, using the inequality 
\begin{align*}
(\varepsilon^{2}+\varepsilon(b^{2H}+(a+b+c)^{2H})+b^{2H}(a+b+c)^{2H})^{\frac{3}{2}}
  &\geq (\varepsilon(a+b+c)^{2H}+b^{2H}(a+b+c)^{2H})^{\frac{3}{2}},
\end{align*}
we get 
\begin{align}\label{eq2:02/09/20152}
V_{2}(\varepsilon)
  &\leq K\int_{[0,T]^{3}}(a+b+c)^{-(H+1)}b(\varepsilon+b^{2H})^{-\frac{3}{2}}\text{d}a\text{d}b\text{d}c\nonumber\\
	&\leq K\left(\int_{[0,T]^{2}}(a+c)^{-(H+1)}\text{d}a\text{d}c\right)\left(\int_{0}^{T}b(\varepsilon+b^{2H})^{   -\frac{3}{2}}\text{d}b\right).
\end{align}
The term $(a+c)^{-(H+1)}$ is clearly integrable over the region $0\leq a,c\leq T$. To bound the integral over $0\leq b\leq T$ of $b(\varepsilon+b^{2H})^{-\frac{3}{2}}$ we proceed as follows. Define $y:=\frac{3}{2}-\frac{1}{H}$. Notice that $0<y<1$ due to the condition $\frac{2}{3}<H<1$.
%
%Let $y\in(\frac{2}{H}-\frac{3}{2},\frac{2}{H})$ be fixed. If $H\geq\frac{3}{4}$ we define $\gamma_{1}:=y+\frac{3}{2}-\frac{2}{H}$, $\gamma_{2}:=\frac{1}{H}-\frac{y}{2}$ and  $\gamma_{3}:=0$. If $H<\frac{3}{4}$ we define $\gamma_{1}:=0$, $\gamma_{2}:=\frac{3}{2}-\frac{1}{H}+\frac{y}{2}$ and  $\gamma_{3}:=\frac{2}{H}-\frac{3}{2}-y$. By construction, $\gamma_{i}\geq0$ for $i=1,2,3$, and $\gamma_{1}+2\gamma_{2}+\gamma_{3}=\frac{3}{2}$. 
Therefore, by the weighted arithmetic mean-geometric mean inequality, we have
\begin{align}\label{eq3n:21afaf/08/2015}
y\varepsilon+(1-y)b^{2H}
&\geq \varepsilon^{y}b^{2H(1-y)}.
\end{align}
From \eqref{eq2:02/09/20152} and \eqref{eq3n:21afaf/08/2015}, it follows that there exists a constant $K>0$, only depending on $H$ and $T$, such that 
\begin{align}
\varepsilon^{3-\frac{2}{H}}V_{2}(\varepsilon)
  &\leq K\varepsilon^{3-\frac{2}{H}-\frac{3y}{2}}\int_{0}^{T}b^{1-3H(1-y)}\text{d}b\nonumber\\
	&=    K\varepsilon^{\frac{3}{4}-\frac{1}{2H}}\int_{0}^{T}b^{\frac{3H}{2}-2}\text{d}b.\label{eq3n:21afafaaaadsbjjhjhj/08/2015}
\end{align}
%Applying the inequality $a+b+c\geq\sqrt{ac}$, we get 
%\begin{align}\label{eq3n:21afafaaaadsbjjhjhj/08/2015}
%\varepsilon^{3-\frac{2}{H}}V_{2}(\varepsilon)
  %&\leq K\varepsilon^{-y}\int_{[0,T]^{3}}b^{Hy-1}(ac)^{H-\frac{3}{2}+\frac{Hy}{2}}\text{d}a\text{d}b\text{d}c.
%\end{align}
The integral in the right-hand side of \eqref{eq3n:21afafaaaadsbjjhjhj/08/2015} is finite thanks to the condition $H>\frac{2}{3}$. Relation \eqref{eq:732ewq:07/02} then follows by taking limit as $\varepsilon\rightarrow0$ in \eqref{eq3n:21afafaaaadsbjjhjhj/08/2015}.
\end{proof}
%====================================================================================================
%=====================================================================================================
\begin{Lema}\label{beta}
Let $c$, $\beta$, $\alpha$ and $\gamma$ be real numbers  such that $c$, $\beta>0$, $\alpha>-1$ and $1+\alpha+\gamma\beta<0$. Then we have
\begin{align}\label{eq31:02/07}
\int_{0}^{\infty}a^{\alpha}(c+a^{\beta})^{\gamma}\text{d}a
  &=\beta^{-1}c^{\frac{\alpha+1+\beta\gamma}{\beta}}B\left(\frac{\alpha+1}{\beta},-\frac{1+\alpha+\gamma\beta}{\beta}\right),
\end{align}
where $B\left(\cdot,\cdot\right)$ denotes the Beta function.
\end{Lema}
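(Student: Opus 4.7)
The plan is to reduce the integral on the left to the standard Beta integral representation $B(p,q)=\int_{0}^{\infty}t^{p-1}(1+t)^{-(p+q)}\,dt$ by a single change of variables that simultaneously absorbs the constant $c$ and normalizes the exponent $\beta$. Since the statement is an explicit identity, there is really only one natural approach, and the work is essentially bookkeeping of exponents.

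First, I would make the substitution $t=a^{\beta}/c$, so that $a=c^{1/\beta}t^{1/\beta}$ and $da=(c^{1/\beta}/\beta)\,t^{1/\beta-1}\,dt$. Under this change, $c+a^{\beta}=c(1+t)$, and the integral becomes
\begin{align*}
\int_{0}^{\infty}a^{\alpha}(c+a^{\beta})^{\gamma}\,da
  &=\frac{1}{\beta}\,c^{\frac{\alpha+1}{\beta}+\gamma}\int_{0}^{\infty}t^{\frac{\alpha+1}{\beta}-1}(1+t)^{\gamma}\,dt.
\end{align*}
Collecting the exponent of $c$ as $\frac{\alpha+1+\beta\gamma}{\beta}$ already matches the prefactor in the claimed formula.

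Second, I would identify the remaining $t$-integral with a Beta function. Setting $p=\frac{\alpha+1}{\beta}$ and $q=-\frac{\alpha+1+\beta\gamma}{\beta}$, we have $-(p+q)=\gamma$, so the standard representation gives
\begin{align*}
\int_{0}^{\infty}t^{\frac{\alpha+1}{\beta}-1}(1+t)^{\gamma}\,dt
  &=B\!\left(\frac{\alpha+1}{\beta},-\frac{\alpha+1+\beta\gamma}{\beta}\right).
\end{align*}
The positivity conditions $p>0$ and $q>0$ needed for the convergence of this Beta integral translate precisely into the hypotheses $\alpha>-1$ and $1+\alpha+\gamma\beta<0$, while $\beta>0$ ensures that $a\mapsto a^{\beta}/c$ is a smooth bijection of $(0,\infty)$ onto itself.

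There is no substantive obstacle here: the only thing to watch is the correct simplification of $\frac{\alpha+1}{\beta}+\gamma=\frac{\alpha+1+\beta\gamma}{\beta}$ so that the exponent of $c$ in the final formula agrees with \eqref{eq31:02/07}, and the verification that the hypotheses match the convergence requirements of $B(p,q)$ in its Mellin-type representation on $(0,\infty)$.
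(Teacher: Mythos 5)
Your proof is correct and follows essentially the same route as the paper: the paper performs the substitutions $x=a^{\beta}$ and then $a=x/c$ separately and adds a final change of variables $x=a/(1+a)$ to land on the $(0,1)$ form of the Beta integral, whereas you combine the first two into $t=a^{\beta}/c$ and invoke the representation $B(p,q)=\int_{0}^{\infty}t^{p-1}(1+t)^{-(p+q)}\,\mathrm{d}t$ directly. The exponent bookkeeping and the matching of the hypotheses to the convergence conditions are all correct.
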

\begin{proof}
Making the change of variables $x=a^{\beta}$ in the left-hand side of \eqref{eq31:02/07} we obtain
\begin{align}\label{eq32:02/07}
\int_{0}^{\infty}a^{\alpha}(c+a^{\beta})^{\gamma}\text{d}a
  &=\beta^{-1}\int_{0}^{\infty}x^{\frac{\alpha+1-\beta}{\beta}}(c+x)^{\gamma}\text{d}x.
\end{align}
Hence, making the change of variables $a=\frac x c$ in the right hand side of \eqref{eq32:02/07} we get 
\begin{align}\label{eq33:02/07}
\int_{0}^{\infty}a^{\alpha}(c+a^{\beta})^{\gamma}\text{d}a
  &=\beta^{-1}c^{\frac{\alpha+1+\beta\gamma}{\beta}}\int_{0}^{\infty}a^{\frac{\alpha+1-\beta}{\beta}}(1+a)^{\gamma}\text{d}a.
\end{align}
Finally, the change of variables $x=\frac{a}{1+a}$ in the right hand side of \eqref{eq33:02/07} leads to 
\begin{align}\label{eq34:02/07}
\int_{0}^{\infty}a^{\alpha}(c+a^{\beta})^{\gamma}\text{d}a
  &=\beta^{-1}c^{\frac{\alpha+1+\beta\gamma}{\beta}}\int_{0}^{1}x^{\frac{\alpha+1-\beta}{\beta}}(1-x)^{-\frac{\beta+1+\alpha+\gamma\beta}{\beta}}\text{d}x,
\end{align}
which implies the desired result.
\end{proof}

%=====================================================================================================
\begin{Lema}\label{Lema1:aux_region3:1/12/08/2015}
Let $\varepsilon,T>0$, and define $V_{3}(\varepsilon)$ by \eqref{Vdef}. Then, for every $\frac{2}{3}<H<1$ we have
\begin{align}
\lim_{\varepsilon\rightarrow0}\varepsilon^{3-\frac{2}{H}}V_{3}(\varepsilon)
  &	=	\sigma^{2}\label{eq1:24/02/2015},
\end{align}
where $\sigma^{2}$ is given by \eqref{eq:5:01/04/2015}.
\end{Lema}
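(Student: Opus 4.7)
The plan is to apply Lemma \ref{Lemaaucx:1} with $i=3$, perform a change of variables exploiting the fact that on $\Sc_3$ the two increment intervals are disjoint, rescale the interval lengths by $\varepsilon^{1/(2H)}$, and pass to the limit via dominated convergence.

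First I would parametrize $\Sc_3$ using $a := t_1 - s_1$, $b := s_2 - t_1$, $c := t_2 - s_2$, with $s_1, a, b, c \geq 0$ and $s_1 + a + b + c \leq T$. Since $[s_1, t_1]$ and $[s_2, t_2]$ are disjoint, the covariance matrix $\Sigma$ of $(B_{t_1} - B_{s_1}, B_{t_2} - B_{s_2})$ has diagonal entries $\Sigma_{1,1} = a^{2H}$, $\Sigma_{2,2} = c^{2H}$, and the representation \eqref{covrep} gives
\begin{align*}
\Sigma_{1,2} = H(2H-1) \int_0^a \int_0^c (b+u+v)^{2H-2}\,du\,dv.
\end{align*}
Integrating $s_1$ out over $[0, T - (a+b+c)]$ turns the formula from Lemma \ref{Lemaaucx:1} into
\begin{align*}
V_3(\varepsilon) = \frac{1}{\pi}\int_{\{a+b+c\leq T\}} (T - (a+b+c))\,|\varepsilon I + \Sigma|^{-3/2}\,\Sigma_{1,2}\,da\,db\,dc.
\end{align*}

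Next I would rescale $a = \varepsilon^{1/(2H)} \tilde{a}$, $c = \varepsilon^{1/(2H)} \tilde{c}$, leaving $b$ untouched; then $\varepsilon + a^{2H} = \varepsilon(1 + \tilde{a}^{2H})$, $\varepsilon + c^{2H} = \varepsilon(1 + \tilde{c}^{2H})$, and the Jacobian contributes $\varepsilon^{1/H}\,d\tilde{a}\,d\tilde{c}$. For fixed $b > 0$ the Taylor expansion of $\Sigma_{1,2}$ yields
\begin{align*}
\Sigma_{1,2} = \varepsilon^{1/H} H(2H-1)\,\tilde{a}\tilde{c}\,b^{2H-2}\,(1+o(1)),
\end{align*}
and since $\Sigma_{1,2}^2 = O(\varepsilon^{2/H} b^{4H-4}) = o(\varepsilon^2)$, the determinant factorizes as $|\varepsilon I + \Sigma| = \varepsilon^2(1+\tilde{a}^{2H})(1+\tilde{c}^{2H})(1+o(1))$. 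Combining these gives pointwise convergence of $\varepsilon^{3-2/H}$ times the integrand to
\begin{align*}
\frac{H(2H-1)}{\pi}\,(T - b)\,\frac{\tilde{a}\tilde{c}\,b^{2H-2}}{\bigl((1+\tilde{a}^{2H})(1+\tilde{c}^{2H})\bigr)^{3/2}}.
\end{align*}

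For dominated convergence I would use the uniform bound $\Sigma_{1,2} \leq H(2H-1)\,ac\,b^{2H-2}$ (valid since $(b+u+v)^{2H-2} \leq b^{2H-2}$ as $2H-2 < 0$) together with the local-nondeterminism estimate \eqref{eq8:02/04/15} of Lemma \ref{Local_non_determinism}, which gives $|\varepsilon I + \Sigma| \geq \delta(\varepsilon + a^{2H})(\varepsilon + c^{2H})$. This produces a majorant proportional to $(T-b)\,b^{2H-2}\,\tilde{a}(1+\tilde{a}^{2H})^{-3/2}\,\tilde{c}(1+\tilde{c}^{2H})^{-3/2}$, which is integrable because $b^{2H-2}$ is integrable near $0$ (as $2H > 1$) and the $\tilde{a}, \tilde{c}$ integrals converge by Lemma \ref{beta}, requiring exactly the hypothesis $H > 2/3$. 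Applying Lemma \ref{beta} with $\alpha = 1$, $\beta = 2H$, $\gamma = -3/2$, $c = 1$, together with the identity $\int_0^T(T-b)b^{2H-2}\,db = T^{2H} B(2, 2H-1)$, yields
\begin{align*}
\lim_{\varepsilon \to 0}\varepsilon^{3-2/H}V_3(\varepsilon) = \frac{H(2H-1)}{\pi}\,T^{2H}B(2, 2H-1)\,\frac{1}{4H^2}\,B\!\left(\frac{1}{H}, \frac{3H-2}{2H}\right)^{2} = \sigma^2.
\end{align*}
The main obstacle is justifying the pointwise convergence and integrable majorant uniformly in $\varepsilon$ in the regime where $b$ is itself small (possibly comparable to $\varepsilon^{1/(2H)}$), where the naive approximation $\Sigma_{1,2}^2 = o(\varepsilon^2)$ fails; controlling that region via the local-nondeterminism lower bound on $|\varepsilon I + \Sigma|$ is the crux of the argument.
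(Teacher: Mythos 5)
Your proposal is correct and follows essentially the same route as the paper: the same parametrization of $\Sc_{3}$ by $(a,b,c)=(t_{1}-s_{1},s_{2}-t_{1},t_{2}-s_{2})$, the same $\varepsilon^{1/(2H)}$-rescaling of $a$ and $c$ only, the same integral representation of $\Sigma_{1,2}=\mu(a+b,a,c)$ for the pointwise limit, and the same evaluation of the limiting integral via Lemma \ref{beta}. If anything, you are slightly more explicit than the paper in justifying the dominating function, since you invoke the local nondeterminism bound \eqref{eq8:02/04/15} to get $\left|\varepsilon I+\Sigma\right|\geq\delta(\varepsilon+a^{2H})(\varepsilon+c^{2H})$, a step the paper's write-up leaves implicit.
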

%=========================================================================================
\begin{proof}
Changing the coordinates $(x,u_{1},u_{2})$ by $(a:=u_{1},b:=x-u_{1},c:=u_{2})$ in \eqref{eq1:04/10} for $i=3$, we obtain
\begin{align}\label{eq1sr23:04/10:16/10}
V_{3}(\varepsilon)
	&= \frac{1}{\pi}\int_{[0,T]^{3}}\Indi{(0,T)}(a+b+c)(T-(a+b+c))\left| \varepsilon I + \Sigma\right|^{-\frac{3}{2}}\Sigma_{1,2}\text{d}a\text{d}b\text{d}c,
\end{align}
where the matrix $\Sigma$ is given by
\begin{align*}
\Sigma_{1,1}
  &=a^{2H},\\
\Sigma_{2,2}
  &=c^{2H},\\
\Sigma_{1,2}
  &=\frac{1}{2}((a+b+c)^{2H}+b^{2H}-(b+c)^{2H}-(a+b)^{2H}).
\end{align*} 
We can easily check, as before,  that 
\begin{align}\label{eq3:2r2/04/02/15}
\left|\varepsilon I+\Sigma\right|
  &=(\varepsilon+\Sigma_{1,1})(\varepsilon+\Sigma_{2,2})-\Sigma_{1,2}^{2}
  =\varepsilon^{2}+\varepsilon(\Sigma_{1,1}+\Sigma_{2,2})+|\Sigma|\nonumber\\
  &= \varepsilon^2+\varepsilon(a^{2H}+c^{2H})+a^{2H}c^{2H}-\mu(a+b,a,c)^{2},
\end{align}
where $\mu$ is defined by \eqref{mudef}. Changing the coordinates $(a,b,c)$ by $(\varepsilon^{-\frac{1}{2H}}a,b,\varepsilon^{-\frac{1}{2H}}c)$ in \eqref{eq1sr23:04/10:16/10} and using \eqref{eq3:2r2/04/02/15},  we obtain 
\begin{align}\label{eq1sr23prime:04/10:16/10}
\varepsilon^{3-\frac{2}{H}}V_{3}(\varepsilon)
	&= \frac{1}{\pi}\int_{\R_{+}^{3}}\Indi{(0,T)}(\varepsilon^{\frac{1}{2H}}(a+c)+b)\Psi_{\varepsilon}(a,b,c)\text{d}a\text{d}b\text{d}c,
\end{align}
where 
\begin{align*}
\Psi_{\varepsilon}(a,b,c)
	&:= \frac{(T-b-\varepsilon^{\frac{1}{2H}}(a+c))\varepsilon^{-\frac{1}{H}}\mu(\varepsilon^{\frac{1}{2H}}a+b,\varepsilon^{\frac{1}{2H}}a,\varepsilon^{\frac{1}{2H}}c)}{\left( 1+a^{2H}+c^{2H}+a^{2H}c^{2H}-\varepsilon^{-2}\mu(\varepsilon^{\frac{1}{2H}}a+b,\varepsilon^{\frac{1}{2H}}a,\varepsilon^{\frac{1}{2H}}c)^{2} \right)^{\frac{3}{2}}}.
\end{align*}
The term $\mu(x+y,x,z)$ can be written as
\begin{align}
\mu(x+y,x,z)
	&= 	H(2H-1)xz\int_{[0,1]^{2}}\left(b+xv_{1}+zv_{2}\right)^{2H-2}\text{d}v_{1}\text{d}v_{2}\label{eq61dsd:01/07/2015},
\end{align}
which implies
\begin{align}\label{eq10:04/09/2015}
\lim_{\varepsilon\rightarrow}\Psi_{\varepsilon}(a,b,c)
  &= \frac{H(2H-1)(T-b)acb^{2H-2}}{(1+a^{2H}+c^{2H}+a^{2H}c^{2H})^{\frac{3}{2}}}\nonumber\\
	&= (T-b)b^{2H-2}ac(1+a^{2H})^{-\frac{3}{2}}(1+c^{2H})^{-\frac{3}{2}}.
\end{align}
Therefore, provided we show that $\Indi{(0,T)}(\varepsilon^{\frac{1}{2H}}(a+c)+b)\Psi_{\varepsilon}(a,b,c)$ is dominated by a function integrable in $\R_+^3$, we obtain the following identity by applying the dominated convergence theorem in  \eqref{eq1sr23prime:04/10:16/10}
\begin{align*}
\lim_{\varepsilon\rightarrow0}\varepsilon^{3-\frac{2}{H}}V_{3}(\varepsilon)
	&= \frac{H(2H-1)}{\pi}\int_{\R_{+}^{3}}\Indi{(0,T)}(b)(T-b)b^{2H-2}ac((1+a^{2H})(1+c^{2H}))^{-\frac{3}{2}}\text{d}a\text{d}b\text{d}c.
\end{align*}
Making the change of variables $x=\frac{b}{T}$, and using Lemma \ref{beta} we obtain \eqref{eq1:24/02/2015}. Next we show that $\Indi{(0,T)}(\varepsilon^{\frac{1}{2H}}(a+c)+b)\Psi_{3,0,\varepsilon}(a,b,c)$ is dominated by a function integrable in $\R_+^3$. Using  \eqref{eq61dsd:01/07/2015}, we deduce that there exists a constant $K>0$ only depending on $T$ and $H$ such that
\begin{align*}
\Psi_{3,0,\varepsilon}(a,b,c)
	&\leq K\frac{acb^{2H-2}}{(1+a^{2H}+c^{2H}+a^{2H}c^{2H})^{\frac{3}{2}}}\nonumber\\
	&= K  b^{2H-2}ac(1+a^{2H})^{-\frac{3}{2}}(1+c^{2H})^{-\frac{3}{2}}.
\end{align*}
The right-hand side in the previous relation is integrable in $\R_{+}^{3}$ thanks to condition $H>\frac{2}{3}$. The proof is now complete.
\end{proof}
%=========================================================================================
\begin{Lema}\label{Lema2:aux_region3}
Let  $T,\varepsilon>0$ be fixed. Define $V_{3}^{(1)}(\varepsilon)$ by \eqref{Vqdef}. Then, for every $\frac{2}{3}<H<1$ it holds 
\begin{align}
\lim_{\varepsilon\rightarrow0}\varepsilon^{3-\frac{2}{H}}V_{3}^{(1)}(\varepsilon)
  &	=	\sigma^{2},\label{eq6:26/02/2015}
\end{align}
where $\sigma^{2}$ is given by \eqref{eq:5:01/04/2015}.
\end{Lema}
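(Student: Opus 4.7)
The plan is to follow, step by step, the argument used in Lemma~\ref{Lema1:aux_region3:1/12/08/2015}; the situation here is actually slightly simpler. Substituting \eqref{eq2a:06/08/2015} with $q=1$ (so $\beta_{1}^{2}=1/(2\pi)$) into the definition \eqref{Vqdef}, I would first rewrite
\begin{align*}
V_{3}^{(1)}(\varepsilon)
 &=\frac{1}{\pi}\int_{\Sc_{3}}\bigl(\varepsilon+(t_{1}-s_{1})^{2H}\bigr)^{-\tfrac{3}{2}}\bigl(\varepsilon+(t_{2}-s_{2})^{2H}\bigr)^{-\tfrac{3}{2}}\mu(s_{2}-s_{1},t_{1}-s_{1},t_{2}-s_{2})\,ds_{1}ds_{2}dt_{1}dt_{2}.
\end{align*}
Compared with the expression for $V_{3}(\varepsilon)$ coming from Lemma~\ref{Lemaaucx:1}, the sole difference is that $|\varepsilon I+\Sigma|^{-3/2}$ has been replaced by the purely diagonal product $(\varepsilon+\Sigma_{1,1})^{-3/2}(\varepsilon+\Sigma_{2,2})^{-3/2}$, i.e.\ the $-\Sigma_{1,2}^{2}$ correction inside the determinant is absent.

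Next, I would perform the same change of variables as in Lemma~\ref{Lema1:aux_region3:1/12/08/2015}: set $a=t_{1}-s_{1}$, $b=s_{2}-t_{1}$, $c=t_{2}-s_{2}$, integrate out $s_{1}\in(0,T-a-b-c)$, and then rescale $a\mapsto\varepsilon^{1/(2H)}a$, $c\mapsto\varepsilon^{1/(2H)}c$. Using that $1+a^{2H}+c^{2H}+a^{2H}c^{2H}=(1+a^{2H})(1+c^{2H})$, this yields
\begin{align*}
\varepsilon^{3-\tfrac{2}{H}}V_{3}^{(1)}(\varepsilon)
 &=\frac{1}{\pi}\int_{\R_{+}^{3}}\Indi{(0,T)}\bigl(\varepsilon^{\tfrac{1}{2H}}(a+c)+b\bigr)\widetilde{\Psi}_{\varepsilon}(a,b,c)\,da\,db\,dc,
\end{align*}
where
\begin{align*}
\widetilde{\Psi}_{\varepsilon}(a,b,c)
 &:=\frac{\bigl(T-b-\varepsilon^{\tfrac{1}{2H}}(a+c)\bigr)\,\varepsilon^{-\tfrac{1}{H}}\mu\bigl(\varepsilon^{\tfrac{1}{2H}}a+b,\varepsilon^{\tfrac{1}{2H}}a,\varepsilon^{\tfrac{1}{2H}}c\bigr)}{(1+a^{2H})^{\tfrac{3}{2}}(1+c^{2H})^{\tfrac{3}{2}}}.
\end{align*}

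From the representation \eqref{eq100ma:16/08/2015} and the bound $(b+\varepsilon^{1/(2H)}(av_{1}+cv_{2}))^{2H-2}\leq b^{2H-2}$ (valid because $2H-2<0$), one obtains $\varepsilon^{-1/H}\mu(\varepsilon^{1/(2H)}a+b,\varepsilon^{1/(2H)}a,\varepsilon^{1/(2H)}c)\leq H(2H-1)ac\,b^{2H-2}$, with pointwise limit $H(2H-1)ac\,b^{2H-2}$ as $\varepsilon\to 0$. This furnishes the $\varepsilon$-independent dominating function $K(T-b)^{+}b^{2H-2}ac(1+a^{2H})^{-3/2}(1+c^{2H})^{-3/2}$, whose integrability on $\R_{+}^{3}$ under $\tfrac{2}{3}<H<1$ is the same computation used in Lemma~\ref{Lema1:aux_region3:1/12/08/2015} via Lemma~\ref{beta}. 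Dominated convergence then gives
\begin{align*}
\lim_{\varepsilon\to 0}\varepsilon^{3-\tfrac{2}{H}}V_{3}^{(1)}(\varepsilon)
 &=\frac{H(2H-1)}{\pi}\int_{\R_{+}^{3}}\Indi{(0,T)}(b)(T-b)b^{2H-2}ac(1+a^{2H})^{-\tfrac{3}{2}}(1+c^{2H})^{-\tfrac{3}{2}}\,da\,db\,dc,
\end{align*}
and a change of variables $x=b/T$ together with Lemma~\ref{beta} identifies the right-hand side with $\sigma^{2}$, exactly as at the end of the proof of Lemma~\ref{Lema1:aux_region3:1/12/08/2015}.

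The only subtle point---and in that sense the main obstacle---is to make sure that dropping the $-\Sigma_{1,2}^{2}$ term from the determinant does not shift the limit. After the rescaling, $\varepsilon^{-2}\Sigma_{1,2}^{2}=\varepsilon^{-2}\mu(\varepsilon^{1/(2H)}a+b,\varepsilon^{1/(2H)}a,\varepsilon^{1/(2H)}c)^{2}$ is of order $\varepsilon^{2/H-2}a^{2}c^{2}b^{4H-4}$, which vanishes as $\varepsilon\to 0$ since $2/H-2>0$ for $H<1$. Hence the denominators of the function $\Psi_{\varepsilon}$ appearing in Lemma~\ref{Lema1:aux_region3:1/12/08/2015} and of $\widetilde{\Psi}_{\varepsilon}$ share the common pointwise limit $(1+a^{2H})^{3/2}(1+c^{2H})^{3/2}$, the two limiting integrands coincide, and $V_{3}(\varepsilon)$ and $V_{3}^{(1)}(\varepsilon)$ produce the same constant $\sigma^{2}$.
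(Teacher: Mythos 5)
Your proof is correct and follows essentially the same route as the paper's: the same change of variables $(a,b,c)=(t_{1}-s_{1},\,s_{2}-t_{1},\,t_{2}-s_{2})$, the rescaling $a\mapsto\varepsilon^{1/(2H)}a$, $c\mapsto\varepsilon^{1/(2H)}c$, the integral representation of $\mu$ to obtain the dominating function $K(T-b)b^{2H-2}ac(1+a^{2H})^{-3/2}(1+c^{2H})^{-3/2}$, dominated convergence, and Lemma \ref{beta} to identify the constant. The closing remark comparing with the determinant in Lemma \ref{Lema1:aux_region3:1/12/08/2015} is accurate but not needed for the argument.
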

%=========================================================================================
\begin{proof}
By \eqref{Vqdef} and \eqref{eq2a:06/08/2015}, 
\begin{align}\label{eq1sssdd:04/10}
V_{3}^{(1)}(\varepsilon)
  &=(2q-1)!\beta_{q}^{2}\int_{\Sc_{3}}G_{\varepsilon,s_{2}-s_{1}}^{(q)}(t_{1}-s_{1},t_{2}-s_{2}),
\end{align}
where $\Sc_{3}$ is defined by \eqref{eq:4:regions}. Changing the coordinates $(s_{1},s_{2},t_{1},t_{2})$ by $(a:=t_{1}-s_{1},b:=s_{2}-t_{1},c:=t_{2}-s_{2})$ in \eqref{eq1sssdd:04/10}, and using \eqref{Gdef}, we obtain

\begin{eqnarray} \notag
V_{3}^{(1)}(\varepsilon)
 &=& \frac{1}{\pi}\int_{\R_{+}^{3}}\int_{0}^{T-(a+b+c)}\Indi{(0,T)}(a+b+c)\left(\varepsilon+a^{2H}\right)^{-\frac{3}{2}}\left(\varepsilon+c^{2H}\right)^{-\frac{3}{2}}\\
 && \times \mu(a+b,a,c)\text{d}s_{1}\text{d}a\text{d}b\text{d}c. \label{eq21n:09/08/2015}
\end{eqnarray}
Then, changing the coordinates $(a,b,c)$ by  $(\varepsilon^{-\frac{1}{2H}}a,b,\varepsilon^{-\frac{1}{2H}}c)$, and integrating $s_{1}$ in equation \eqref{eq21n:09/08/2015}, we get
\begin{align*}
V_{3}^{(1)}(\varepsilon)
 &=\frac{\varepsilon^{\frac{1}{H}-3}}{\pi}\int_{\R_{+}^{3}}(T-b-\varepsilon^{\frac{1}{2H}}(a+c))\Indi{(0,\varepsilon^{-\frac 1{2H}}(T-b))}(a+c)\\
&\times\left(1+a^{2H}\right)^{-\frac{3}{2}}\left(1+c^{2H}\right)^{-\frac{3}{2}}\mu(\varepsilon^{\frac{1}{2H}}a+b,\varepsilon^{\frac{1}{2H}}a,\varepsilon^{\frac{1}{2H}}c)\text{d}a\text{d}b\text{d}c.
\end{align*}
Next, using the identity 
\begin{align*}
\mu(x+y,x,z)
  &=H(2H-1)xz\int_{[0,1]^{2}}(y+xv_{1}+zv_{2})^{2H-2}\text{d}v_{1}\text{d}v_{2}, 
\end{align*}
we get 
\begin{align}\label{eq:5546:19/02/2015}
\varepsilon^{3-\frac{2}{H}}V_{3}^{(1)}(\varepsilon)  \nonumber
&=\frac{H(2H-1)}{\pi}\int_{0}^{T}\int_{\R_{+}^{2}}\int_{[0,1]^2}\Indi{(0,\varepsilon^{-\frac{1}{2H}}(T-b))}(a+c)(T-b-\varepsilon^{\frac{1}{2H}}(a+c))\\
&\times (1+a^{2H})^{-\frac{3}{2}}(1+c^{2H})^{-\frac{3}{2}}ac(b+\varepsilon^{\frac{1}{2H}}(av_{1}+cv_{2}))^{2H-2}\text{d}v_{1}\text{d}v_{2}\text{d}a\text{d}c\text{d}b.
\end{align}
Notice that the argument of the integral in the right-hand side of \eqref{eq:5546:19/02/2015} is dominated by the function 
\begin{align*}
\Theta(a,b,c,v_{1},v_{2})
  &:=\frac{TH(2H-1)}{\pi}(1+a^{2H})^{-\frac{3}{2}}(1+c^{2H})^{-\frac{3}{2}}acb^{2H-2}.
\end{align*}
The integral $\int_{0}^{T}\int_{\R_{+}^{2}}\int_{[0,1]^2}\Theta(a,b,c,v_{1},v_{2})\text{d}v_{1}\text{d}v_{2}\text{d}a\text{d}c\text{d}b$ is finite thanks to condition $H>\frac{2}{3}$. Therefore, applying the dominated convergence theorem to \eqref{eq:5546:19/02/2015}, we get
\begin{align*}
\lim_{\varepsilon\rightarrow0}\varepsilon^{3-\frac{2}{H}}V_{3}^{(1)}(\varepsilon)
&=\frac{H(2H-1)}{\pi}\int_{0}^{T}\int_{\R_{+}^{2}}(T-b)(1+a^{2H})^{-\frac{3}{2}}(1+c^{2H})^{-\frac{3}{2}}acb^{2H-2}\text{d}a\text{d}c\text{d}b\\
&=\frac{H(2H-1)}{\pi}\left(\int_{0}^{T}(T-b)b^{2H-2}\text{d}b\right)\left(\int_{0}^{\infty}a(1+a^{2H})^{-\frac{3}{2}}\text{d}a\right)^2.
\end{align*}
Making the change of variables $x=\frac{b}{T}$, and using Lemma \ref{beta} we obtain \eqref{eq6:26/02/2015}.
\end{proof}
%=====================================================================================================
%=============================================================================================================
\begin{Lema}\label{Lema:aux_regionlqg2}
Let $T,\varepsilon>0$ and $q\in\N$, $q\geq2$ be fixed. Define $G_{1,x}^{(q)}(u_{1},u_{2})$ by \eqref{Gdef}. Then, for every $\frac{3}{4}<H<\frac{4q-3}{4q-2}$,  it holds that
\begin{align}
\int_{\R_{+}^{3}}G_{1,x}^{(q)}(u_{1},u_{2})\text{d}x\text{d}u_{1}\text{d}u_{2}<\infty .\label{eq2}	
\end{align}
\end{Lema}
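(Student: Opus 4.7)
The plan is to decompose $\mathbb{R}_+^3$ into two regions based on the relative size of $x$ versus $u_1\vee u_2$, apply complementary bounds on $\mu$ in each, and then verify integrability of the remaining 2D integral.

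First I would prove two pointwise bounds on $\mu(x,u_1,u_2)$ starting from the representation \eqref{eq100ma:16/08/2015}. The large-$x$ bound
\begin{equation*}
\mu(x,u_1,u_2) \le C_H \, u_1 u_2\, x^{2H-2}, \qquad x \ge 2(u_1\vee u_2),
\end{equation*}
is immediate because $|v_2-v_1|\ge x-u_1\ge x/2$ throughout the integration domain. The more delicate $x$-uniform bound
\begin{equation*}
\mu(x,u_1,u_2) \le C_H \,(u_1\wedge u_2)(u_1\vee u_2)^{2H-1}, \qquad x\ge 0,
\end{equation*}
follows by first computing $\sup_{v_1\in\mathbb{R}}\int_x^{x+u_2}|v_2-v_1|^{2H-2}\,dv_2 = 2^{2-2H}u_2^{2H-1}/(2H-1)$ (after the substitution $w=v_2-v_1$, the supremum is attained when $v_1=x+u_2/2$, where the interval is symmetric about the singularity); integrating over $v_1\in[0,u_1]$ gives $\mu\le C_H u_1 u_2^{2H-1}$, while the symmetric argument (integrating $v_1$ first) yields $\mu\le C_H u_2 u_1^{2H-1}$.

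I would then split $\mathbb{R}_+^3 = A\cup B$ with $A:=\{x\le 2(u_1\vee u_2)\}$, $B:=\{x>2(u_1\vee u_2)\}$, and integrate in $x$. On $B$ the $x$-integral $\int_{2(u_1\vee u_2)}^\infty x^{(2H-2)(2q-1)}\,dx$ converges precisely when $(2H-2)(2q-1)<-1$, i.e.\ $H<(4q-3)/(4q-2)$, and produces a factor proportional to $(u_1\vee u_2)^{(2H-2)(2q-1)+1}$. On $A$ the uniform bound contributes a factor $2(u_1\vee u_2)$. A small algebraic simplification shows that in both cases, after restricting by symmetry to $u_1\le u_2$, the integrand is dominated by a constant multiple of
\begin{equation*}
\Phi(u_1,u_2)\ :=\ \frac{u_1^{2q-1}\, u_2^{1+(2H-1)(2q-1)}}{(1+u_1^{2H})^{q+1/2}\,(1+u_2^{2H})^{q+1/2}},
\end{equation*}
since both $x$-estimates produce the same power $1+(2H-1)(2q-1)$ of $u_1\vee u_2$.

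Finally I would verify $\int\!\!\int_{\{u_1\le u_2\}}\Phi\,du_1\,du_2<\infty$ via Fubini. The inner integral $F(u_2):=\int_0^{u_2}u_1^{2q-1}(1+u_1^{2H})^{-q-1/2}\,du_1$ is bounded in $u_2$ when $H\ge 2q/(2q+1)$ and otherwise grows like $u_2^{2q-H(2q+1)}$; a direct computation shows that the outer $u_2$-integrand then behaves at infinity as $u_2^{2Hq-2q-3H+2}$ in the first case and as $u_2^{2-4H}$ in the second. For $q\ge 2$ the first exponent is $<-1$ for every $H<1$, while the second is $<-1$ iff $H>3/4$; near $u_2=0$ all relevant exponents are nonnegative, making integrability automatic. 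The region $u_2<u_1$ is handled identically, applying the symmetric bound from Step 1 with the roles of $u_1$ and $u_2$ swapped.

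The main obstacle is establishing the $x$-uniform bound of Step 1. The naive Cauchy–Schwarz estimate $\mu\le u_1^H u_2^H$ would lead to an $A$-contribution decaying only as $u_2^{1-2H}$ at infinity, forcing $H>1$; the refinement above, which is tighter off the diagonal $u_1\sim u_2$, is precisely what brings in the correct threshold $H>3/4$.
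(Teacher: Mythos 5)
Your proof is correct, but it is organized quite differently from the paper's. The paper splits $\R_{+}^{3}$ into three regions according to the mutual position of the intervals $[0,u_{1}]$ and $[x,x+u_{2}]$ (overlapping, nested, disjoint), passes to gap coordinates $(a,b,c)$ in each, and uses region-specific estimates: Cauchy--Schwarz combined with the arithmetic--geometric mean inequality on the overlapping region; a factorization $\mu^{2q-1}=\mu^{3}\bigl(\mu/\sqrt{(1+b^{2H})(1+(a+b+c)^{2H})}\bigr)^{2(q-2)}$ exploiting $q\geq2$ on the nested region; and, on the disjoint region, essentially your two bounds $\mu\leq C u_{1}u_{2}x^{2H-2}$ and $\mu\leq C(u_{1}\wedge u_{2})(u_{1}\vee u_{2})^{2H-1}$. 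You instead split only according to whether $x$ is larger or smaller than $2(u_{1}\vee u_{2})$, integrate out $x$ first, and reduce everything to a single explicit two-dimensional dominating function $\Phi$. This is more streamlined, avoids the case analysis on the nested region, and makes transparent where each hypothesis enters: $H<\frac{4q-3}{4q-2}$ gives convergence of the tail of the $x$-integral, and $H>\frac{3}{4}$ gives convergence of the tail of the $u_{2}$-integral. Both of your pointwise bounds on $\mu$ follow correctly from \eqref{eq100ma:16/08/2015}, and the exponent bookkeeping checks out. One small point to record: at $H=\frac{2q}{2q+1}$, which does lie in the interval $\bigl(\frac{3}{4},\frac{4q-3}{4q-2}\bigr)$ for every $q\geq2$, the inner integral $F(u_{2})$ is not bounded but grows like $\log u_{2}$; since the outer exponent $2-4H$ is then strictly below $-1$, the extra logarithm is harmless, but this boundary case should be mentioned for completeness.
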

%====================================================================================================
\begin{proof}
Let $T,\varepsilon>0$, and $q\in\N$ be fixed, and define the sets 
\begin{align*}
\Tc_{1}
  &:=\{(x,u_{1},u_{2})\in\R_{+}^{3}\ |\ u_{1}-x\geq0,~x+u_{2}-u_{1}\geq0\},\\
\Tc_{2}
  &:=\{(x,u_{1},u_{2})\in\R_{+}^{3}\ |\ u_{1}-x-u_{2}\geq0\},\\
\Tc_{3}
  &:=\{(x,u_{1},u_{2})\in\R_{+}^{3}\ |\ x-u_{1}\geq0\}.
\end{align*}
Since $\R_{+}^{3}=\Tc_{1}\cup\Tc_{2}\cup \Tc_{3}$, it suffices to prove that $G_{1,x}^{(q)}(u_{1},u_{2})$ is integrable in $\Tc_{i}$, for $i=1,2,3$.

To prove the integrability of $G_{1,x}^{(q)}(u_{1},u_{2})$ in $\Tc_{1}$ we change the coordinates $(x,u_{1},u_{2})$ by $(a:=x,b:=u_{1}-x,c:=x+u_{2}-u_{1})$. Then,
\begin{align}\label{eq1n:09/11/2015}
\int_{\Tc_{1}}G_{1,x}^{(q)}(u_{1},u_{2})\text{d}x\text{d}u_{1}\text{d}u_{2}
  &=\int_{\R_{+}^{3}}G_{1,a}^{(q)}(a+b,b+c)\text{d}a\text{d}b\text{d}c.
\end{align}
Next we prove that the right hand of \eqref{eq1n:09/11/2015} is finite. Notice that 
\begin{align*}
G_{1,a}^{(q)}(a+b,b+c)
  &=(1+(a+b)^{2H})^{-\frac{1}{2}-q}(1+(b+c)^{2H})^{-\frac{1}{2}-q}\mu(a,a+b,b+c)^{2q-1}.
\end{align*}
By the Cauchy-Schwarz inequality, we get $\mu(a,a+b,b+c)\leq(a+b)^{H}(b+c)^{H}$, and consequently,
\begin{align*}
G_{1,a}^{(q)}(a+b,b+c)\leq(1+(a+b)^{2H})^{-1}(1+(b+c)^{2H})^{-1}.
\end{align*}
Hence, using the  inequalities $\frac{2}{3}a+\frac{1}{3}b\geq a^{\frac{2}{3}}b^{\frac{1}{3}}$ and $\frac{2}{3}c+\frac{1}{3}b\geq c^{\frac{2}{3}}b^{\frac{1}{3}}$, we deduce that there exists a constant $K$ only depending on $T$ and  $H$ such that the following bounds hold
\begin{align*}
G_{1,a}^{(q)}(a+b,b+c)
&\leq K(abc)^{-\frac{4H}{3}}~~~~~~~~~~~~~~~~~~~~~~~~~~~~~~~~\text{if }~~a,b,c\geq1,\\
G_{1,a}^{(q)}(a+b,b+c)
&\leq K(1+b^{2H})^{-1}(1+c^{2H})^{-1}~~~~~~~~~~~~~\text{if }~~a\leq1,\\
G_{1,a}^{(q)}(a+b,b+c)
&\leq K(1+b^{2H})^{-1}(1+a^{2H})^{-1}~~~~~~~~~~~~~\text{if }~~c\leq1,\\
G_{1,a}^{(q)}(a+b,b+c)
&\leq K(1+a^{2H})^{-1}(1+c^{2H})^{-1}~~~~~~~~~~~~~\text{if }~~b\leq1.
\end{align*}
Using the previous bounds, as well as condition $H>\frac{3}{4}$, we deduce that $G_{1,a}^{(q)}(a+b,b+c)$ is integrable in the variables $a,b,c\geq0$.

To prove the integrability of $G_{1,x}^{(q)}(u_{1},u_{2})$ in $\Tc_{2}$ we change the coordinates $(x,u_{1},u_{2})$ by $(a:=x,b:=u_{2},c:=u_{1}-x-u_{2})$. Then,
\begin{align*}
\int_{\Tc_{2}}G_{1,x}^{(q)}(u_{1},u_{2})\text{d}x\text{d}u_{1}\text{d}u_{2}
  &=\int_{\R_{+}^{3}}G_{1,a}^{(q)}(b,a+b+c)\text{d}a\text{d}b\text{d}c.
\end{align*}
Next we prove that $G_{1,a}^{(q)}(a+b,b+c)$ is integrable in the variables $a,b,c\geq0$. Using inequality $\mu(a,a+b+c,b)\leq(a+b+c)^{H}b^{H}$, as well as the condition $q\geq2$, we obtain
\begin{align}\label{e1:11/02/15}
G_{1,a}^{(q)}(b,a+b+c)  
  &= (1+(a+b+c)^{2H})^{-\frac{5}{2}}(1+b^{2H})^{-\frac{5}{2}}\mu(x,a+b+c,b)^{3}\nonumber\\
  &\times \left(\frac{\mu(a,a+b+c,b)}{\sqrt{(1+b^{2H})(1+(a+b+c)^{2H})}}\right)^{2(q-2)}\nonumber\\
  &\leq (1\vee a\vee b\vee c)^{-5H}(1\vee b)^{-5H}\mu(a,a+b+c,b)^3.
\end{align}
Similarly, using $q\geq1$ we can prove that 
\begin{align}\label{e1dds:11/02/15}
G_{1,a}^{(q)}(b,a+b+c)  
  &\leq (1\vee a\vee b\vee c)^{-2H}(1\vee b)^{-2H}.
\end{align}
In addition, using the representation 
\begin{align*}
\mu(a,a+b+c,b)
  &= \frac{1}{2}\left((a+b)^{2H}+(b+c)^{2H}-a^{2H}-c^{2H}\right)\nonumber\\
	&= 	Hb\int_{0}^{1}\left((a+bu)^{2H-1}+(c+bu)^{2H-1}\right)\text{d}u,
\end{align*}
we deduce that there exist  constants $K,K^{\prime}$ only depending on $H$ such that 
\begin{align}\label{eq:3/07/02eq:3/07/02}
\mu(a,a+b+c,b)\Indi{(0,a\wedge c)}(b)
  &\leq K\Indi{(0,a\wedge c)}(b)b((a+b)^{2H-1}+(c+b)^{2H-1})\nonumber\\
  &\leq K^{\prime}\Indi{(0,a\wedge c)}(b)b(a\vee c)^{2H-1}\nonumber\\
  &\leq K^{\prime}(1\vee b)(1\vee a\vee c)^{2H-1}.
\end{align}
Combining the inequalities \eqref{e1:11/02/15} and \eqref{eq:3/07/02eq:3/07/02}, we deduce that there exists a constant $K>0$ such that 
\begin{align*}
G_{1,a}^{(q)}(b,a+b+c)\Indi{(0,a\wedge c)}(b)  
  &\leq K\Indi{(0,a\wedge c)}(b)(1\vee a\vee b\vee c)^{-5H}(1\vee b)^{-5H+3}(1\vee a\vee c)^{6H-3}\\
  &\leq K(1\vee a\vee c)^{H-3}(1\vee b)^{-5H+3}.
\end{align*}
Using the previous inequality, as well as the condition $H>\frac{3}{4}$, we deduce that $G_{1,a}^{(q)}(b,a+b+c)$ is integrable in $\{(a,b,c)\in\R_{+}^{3}\ | \ b\leq a\wedge c\}$. 
In addition, from \eqref{e1dds:11/02/15} we obtain
\begin{align*}
G_{1,a}^{(q)}(b,a+b+c)\Indi{(0,b\wedge c)}(a)
  &\leq (1\vee b)^{-2H}(1\vee b\vee c)^{-2H}.
\end{align*}
Therefore, using  condition $H>\frac{3}{4}$, we deduce that $G_{1,a}^{(q)}(b,a+b+c)$ is integrable in $\{(a,b,c)\in\R_{+}^{3}\ |\ a\leq b\wedge c\}$. By symmetry $G_{1,a}^{(q)}(b,a+b+c)$ is integrable  in $\{(a,b,c)\in\R_{+}^{3}\ |\ c\leq a\wedge b\}$. 
From the previous analysis we conclude that $G_{1,x}^{(q)}(u_{1},u_{2})$ is integrable in $\Tc_{2}$.

To prove the integrability of $G_{1,x}^{(q)}(u_{1},u_{2})$ in $\Tc_{3}$, we change the coordinates $(x,u_{1},u_{2})$ by $(a:=u_{1},b:=x-u_{1},c:=u_{2})$. Then, 
\begin{align*}
\int_{\Tc_{3}}G_{1,x}^{(q)}(u_{1},u_{2})\text{d}x\text{d}u_{1}\text{d}u_{2}
  &=\int_{\R_{+}^{3}}G_{1,a+b}^{(q)}(a,c)\text{d}a\text{d}b\text{d}c.
\end{align*}
To bound $G_{1,a+b}^{(q)}(a,c)$ we proceed as follows. Using inequality $\mu(a+b,a,c)\leq a^{H}c^{H}$, we deduce that 
\begin{align}\label{eq1b:13/10/2015}
G_{1,a+b}^{(q)}(a,c)
  &\leq \left(1+a^{2H}\right)^{-1}\left(1+c^{2H}\right)^{-1}\nonumber\\
  &\leq \left(1\vee a\right)^{-2H}\left(1\vee c\right)^{-2H}.
\end{align}
As a consequence, $G_{1,a+b}^{(q)}(a,c)$ is integrable in $\{(a,b,c)\in\R_{+}^{3}\ |\ b\leq 1\}$. In addition, from relation 
\begin{align}\label{eq:1:05/091}
\mu(x+y,x,z)
  &=H(2H-1)xz\int_{[0,1]^{2}}(y+xv_{1}+zv_{2})^{2H-2}\text{d}v_{1}\text{d}v_{2},
\end{align}
we can prove that
\begin{align}\label{eqq:9:02/02}
\mu(x+y,x,z)
  &\leq H(2H-1)xzy^{2H-2}.
\end{align}
Using  \eqref{eqq:9:02/02}, we deduce that there exists a constant $K>0$, only depending on $H$ and $q$, such that  
\begin{align*}
G_{1,a+b}^{(q)}(a,c)
  &\leq K\left(\left(1+a^{2H}\right)\left(1+c^{2H}\right)\right)^{-\frac{1}{2}-q}(ac)^{2q-1}b^{2(2q-1)(H-1)}\\
	&\leq K\left(\left(1\vee a\right)\left(1\vee c\right)\right)^{-H-2qH+2q-1}b^{2(2q-1)(H-1)}.
\end{align*}
Taking into account that $H<\frac{4q-3}{4q-2}$, we get $2(2q-1)(H-1)<-1$, and hence
\begin{align}   \nonumber
\int_{1\vee a\vee c}^{\infty} G_{1,a+b}^{(q)}(a,c)\text{d}b
&  \leq K\left(\left(1\vee a\right)\left(1\vee c\right)\right)^{-H-2qH+2q-1}(1\vee a\vee c)^{2(2q-1)(H-1)+1}\\
  &\leq K\left(1\vee a\right)^{-2H+\frac{1}{2}}\left(1\vee c\right)^{-2H+\frac{1}{2}}, \label{sseq23:03/03/2015}
\end{align}
where in the last inequality we used the relation
\begin{align*}
(1\vee a\vee c)^{2(2q-1)(H-1)+1}\leq (1\vee a)^{(2q-1)(H-1)+\frac{1}{2}}(1\vee c)^{(2q-1)(H-1)+\frac{1}{2}}.
\end{align*}
Using  relation \eqref{sseq23:03/03/2015} as well as condition $H>\frac{3}{4}$, we conclude that $G_{1,a+b}^{(q)}(a,c)$ is integrable in $\{(a,b,c)\in\R_{+}^{3}\ |\ 1\vee a\vee c\leq b\}$. In addition, from \eqref{eq:1:05/091}  we obtain
\begin{align*}
\mu(x+y,x,z)
  &\leq H(2H-1)xz\int_{[0,1]^2}(xv_{1}+zv_{2})^{2H-2}\text{d}v_{1}\text{d}v_{2}\\
	&\leq H(2H-1)xz\int_{0}^{1}((x\vee z)w)^{2H-2}\text{d}w\\
  &=    Hxz(x\vee z)^{2H-2}=    H(x\wedge z)(x\vee z)^{2H-1}.
\end{align*}
Hence, there exist constants $K,\widetilde{K}\geq0$ such that 
\begin{multline}\label{eq2b:13/10/2015}
G_{1,a+b}^{(q)}(a,c)\Indi{(a\wedge c,a\vee c)}(b)\\
\begin{aligned}
  &= \left(\left(1+a^{2H}\right)\left(1+c^{2H}\right)\right)^{-\frac{1}{2}-q}\mu(a+b,a,c)^{2q-1}\\
  &\leq K\left(\left(1\vee a\right)\left(1\vee c\right)\right)^{-H-2qH}(a\wedge c)^{2q-1}(a\vee c)^{(2q-1)(2H-1)}\\
  &\leq K\left(\left(1\vee a\right)\left(1\vee c\right)\right)^{-H-2qH}(1\vee(a\wedge c))^{2q-1}(1\vee a\vee c)^{(2q-1)(2H-1)}\\
  &= K(1\vee (a\wedge c))^{-H(2q+1)+2q-1}(1\vee a\vee c)^{-3H-2q+2qH+1}.
\end{aligned}
\end{multline}
Using relation \eqref{eq2b:13/10/2015} as well as condition $H>\frac{3}{4}$, we obtain that $G_{1,a+b}^{(q)}(a,c)$ is integrable in the 
region $\{(a,b,c)\in\R_{+}^{3}\ |\ a\wedge c\leq b\leq a\vee c\}$. Finally, applying \eqref{eq1b:13/10/2015} we can prove that $G_{1,a+b}^{(q)}(a,c)$ is integrable in $\{(a,b,c)\in\R_{+}^{3}\ |\ b\leq a\wedge c\}$. From the previous analysis we conclude 
that $G_{1,a+b}^{(q)}(a,c)$ is integrable in the variables $a,b,c\geq0$, which in turn implies that $G_{1,x}^{(q)}(u_{1},u_{2})$ is integrable in $\Tc_{3}$ as required.
\end{proof}

%=============================================================================================================
\begin{Lema}\label{Lema:aux_regionsqg2s}
Let $T,\varepsilon>0$ and $q\in\N$, $q\geq2$ be fixed, and define $G_{0,x}^{(q)}(u_{1},u_{2})$ by \eqref{Gdef}. Then, for every $\frac{2}{3}<H<\frac{3}{4}$, we have
\begin{align*}
\int_{[0,T]^{3}}G_{0,x}^{(q)}(u_{1},u_{2}) \text{d}x {\text d}u_{1}{\text d}u_{2}<\infty.
\end{align*}
\end{Lema}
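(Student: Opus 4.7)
The plan is to use the $H$-self-similarity of fractional Brownian motion to rescale, reducing the problem to the integrability of $G_{0,1}^{(q)}$ over $\R_{+}^{2}$. Self-similarity gives the scaling $\mu(\lambda x, \lambda u_{1}, \lambda u_{2}) = \lambda^{2H}\mu(x, u_{1}, u_{2})$ for every $\lambda>0$; taking $\lambda=x$ with base point $(1,v_{1},v_{2})$ yields
\[
G_{0,x}^{(q)}(xv_{1}, xv_{2}) = x^{-4H}\, G_{0,1}^{(q)}(v_{1}, v_{2}).
\]
Changing variables via $v_{i}:=u_{i}/x$ (Jacobian $x^{2}$) and using Tonelli's theorem together with the nonnegativity of $G_{0,1}^{(q)}$ (see \eqref{Ggeq0}), one obtains
\[
\int_{[0,T]^{3}} G_{0,x}^{(q)}(u_{1},u_{2})\, \text{d}x\, \text{d}u_{1}\, \text{d}u_{2} = \int_{0}^{T} x^{2-4H}\int_{[0,T/x]^{2}} G_{0,1}^{(q)}(v_{1},v_{2})\, \text{d}v_{1}\, \text{d}v_{2}\, \text{d}x \leq \mathcal{I}\int_{0}^{T} x^{2-4H}\, \text{d}x,
\]
where $\mathcal{I}:=\int_{\R_{+}^{2}} G_{0,1}^{(q)}(v_{1},v_{2})\, \text{d}v_{1}\,\text{d}v_{2}$. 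The outer factor is finite since $H<3/4$ gives $2-4H>-1$, so the proof reduces to establishing $\mathcal{I}<\infty$.

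To bound $\mathcal{I}$, I would split $\R_{+}^{2}$ into the four regions determined by whether each $v_{i}$ lies in $[0,1]$ or $[1,\infty)$, and in each region use a tailored bound on $\mu$ derived from the closed-form expression
\[
\mu(1,v_{1},v_{2}) = \tfrac{1}{2}\bigl((1+v_{2})^{2H} - 1 - |1+v_{2}-v_{1}|^{2H} + |1-v_{1}|^{2H}\bigr),
\]
which follows from the covariance formula of $B$. On $[0,1]^{2}$, two applications of the mean value theorem give $\mu(1,v_{1},v_{2})\leq Cv_{1}v_{2}$, yielding $G_{0,1}^{(q)}\leq C v_{1}^{2q-1-(2q+1)H}v_{2}^{2q-1-(2q+1)H}$, which is integrable at the origin because the exponent $2q-1-(2q+1)H>-1$ follows from $q\geq2$ and $H<3/4$. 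On $[1,\infty)^{2}$, the Cauchy--Schwarz bound $\mu\leq v_{1}^{H}v_{2}^{H}$ gives $G_{0,1}^{(q)}\leq v_{1}^{-2H}v_{2}^{-2H}$, integrable at infinity since $H>1/2$. On the mixed region $[0,1]\times[1,\infty)$, the mean value theorem yields $\mu\leq Cv_{1}(1+v_{2})^{2H-1}$, leading to $G_{0,1}^{(q)}\leq C v_{1}^{2q-1-(2q+1)H}v_{2}^{(2H-1)(2q-1)-(2q+1)H}$; the $v_{2}$-exponent equals $2Hq-3H-2q+1$, which is $<-1$ provided $H<(2q-2)/(2q-3)$, a condition trivially satisfied for $q\geq2$ and $H<1$. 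The symmetric region $[1,\infty)\times[0,1]$ is treated by exchanging the roles of $v_{1}$ and $v_{2}$.

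The main obstacle will be the mixed-region analysis, where one must balance the smallness of one variable against the largeness of the other. The key input there is the bound $\mu(1,v_{1},v_{2})\leq Cv_{1}(1+v_{2})^{2H-1}$, obtained via the mean value theorem applied to the closed-form expression and which replaces the naive Cauchy--Schwarz estimate (which fails there since it yields $v_{1}^{-2H}$, non-integrable at the origin for $H>1/2$).
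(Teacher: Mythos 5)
Your proof is correct, but it takes a genuinely different route from the paper's. The paper works directly with the three-dimensional integral, splitting $[0,T]^{3}$ into three regions according to the relative position of the intervals $[0,u_{1}]$ and $[x,x+u_{2}]$ (overlapping, nested, disjoint) and carrying out a separate change of variables and a separate bound on $\mu$ in each. You instead exploit the exact homogeneity $G_{0,x}^{(q)}(xv_{1},xv_{2})=x^{-4H}G_{0,1}^{(q)}(v_{1},v_{2})$ --- available only because $\varepsilon=0$ --- to integrate out $x$, producing the factor $\int_{0}^{T}x^{2-4H}\,\text{d}x$ whose finiteness is exactly the hypothesis $H<\frac{3}{4}$; this isolates the role of that hypothesis very cleanly and reduces everything to a two-dimensional integrability statement, which you settle by a quadrant decomposition of $\R_{+}^{2}$. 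Your exponent arithmetic checks out: $2q-1-(2q+1)H>-1$ since $H<\frac{3}{4}<\frac{2q}{2q+1}$ for $q\ge2$, and $2Hq-3H-2q+1<-1$ for all $H<1$ when $q\ge2$. Two details deserve care in a final write-up. First, a literal double application of the mean value theorem to the closed form of $\mu(1,v_{1},v_{2})$ leaves an intermediate point that can be as close to $1-v_{1}$ as one likes, hence a spurious factor $(1-v_{1})^{2H-2}$ blowing up as $v_{1}\to1$; the clean way to obtain $\mu(1,v_{1},v_{2})\le Hv_{1}v_{2}$ and $\mu(1,v_{1},v_{2})\le Hv_{1}(1+v_{2})^{2H-1}$ for $v_{1}\le1$ is to integrate the representation \eqref{eq100ma:16/08/2015}, bounding the kernel by $(1-a)^{2H-2}$ and using $\int_{0}^{v_{1}}(1-a)^{2H-2}\,\text{d}a\le\frac{v_{1}}{2H-1}$. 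Second, $\mu(1,v_{1},v_{2})$ is not symmetric in $(v_{1},v_{2})$, so the region $[1,\infty)\times[0,1]$ is not handled literally ``by symmetry''; the analogous bound $\mu(1,v_{1},v_{2})\le Kv_{2}(1+v_{1})^{2H-1}$ does hold, again from \eqref{eq100ma:16/08/2015}, after splitting the inner integral at the point where $|1+b-a|$ vanishes. Neither point affects validity. What the paper's heavier decomposition buys is reusability: essentially the same region analysis also handles Lemma \ref{Lema:aux_regionlqg2}, where $\varepsilon=1$ destroys the scaling your reduction relies on.
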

%------------------------------
\begin{proof}
Let $T,\varepsilon>0$, and $q\in\N$, and define the sets 
\begin{align*}
\widetilde{\Tc}_{1}
  &:=\{(x,u_{1},u_{2})\in[0,T]^{3}\ |\ u_{1}-x\geq0,~x+u_{2}-u_{1}\geq0\},\\
\widetilde{\Tc}_{2}
  &:=\{(x,u_{1},u_{2})\in[0,T]^{3}\ |\ u_{1}-x-u_{2}\geq0\},\\
\widetilde{\Tc}_{3}
  &:=\{(x,u_{1},u_{2})\in[0,T]^{3}\ |\ x-u_{1}\geq0\}.
\end{align*}
Since $[0,T]^{3}=\widetilde{\Tc}_{1}\cup\widetilde{\Tc}_{2}\cup \widetilde{\Tc}_{3}$, it suffices to check the integrability of $G_{0,x}^{(q)}(u_{1},u_{2})$ in $\widetilde{\Tc}_{i}$, for $i=1,2,3$. To prove integrability in $\widetilde{\Tc}_{1}$ we make change the coordinates $(x,u_{1},u_{2})$ by $(a:=x,b:=u_{1}-x,c:=x+u_{2}-u_{1})$. Then, 
\begin{align*}
\int_{\widetilde{\Tc}_{1}}G_{0,x}^{(q)}(u_{1},u_{2})\text{d}x\text{d}u_{1}\text{d}u_{2}
  &\leq\int_{[0,T]^{3}}G_{0,a}^{(q)}(a+b,b+c)\text{d}a\text{d}b\text{d}c.
\end{align*}
By the inequality  $\mu(a,a+b,b+c)\leq(a+b)^{H}(b+c)^{H}$,  we can write
\begin{align}\label{eq1n:07/08/2015}
G_{0,a}^{(q)}(a+b,b+c)
  \leq (a+b)^{-2H}(b+c)^{-2H}.
\end{align}
Therefore, using  $\frac{2a}{3}+\frac{b}{3}\geq a^{\frac{2}{3}}b^{\frac{1}{3}}$ and $\frac{2c}{3}+\frac{b}{3}\geq c^{\frac{2}{3}}b^{\frac{1}{3}}$, as well as \eqref{eq1n:07/08/2015}, we deduce that there exists a universal constant $K$ such that 
\begin{align*}
G_{0,a}^{(q)}(a+b,b+c)
  \leq K(abc)^{-\frac{4H}{3}}.
\end{align*}
The right hand side in the previous inequality is integrable in $[0,T]^{3}$ thanks to the condition $H<\frac{3}{4}$. Therefore, $G_{0,x}^{(q)}(u_{1},u_{2})$ is integrable in $\widetilde{\Tc}_{1}$.

To prove the integrability of $G_{0,x}^{(q)}(u_{1},u_{2})$ in $\widetilde{\Tc}_{2}$ we change the coordinates $(x,u_{1},u_{2})$ by $(a:=x,b:=u_{2},c:=u_{1}-x-u_{2})$. Then,
\begin{align*}
\int_{\widetilde{\Tc}_{2}}G_{0,x}^{(q)}(u_{1},u_{2})\text{d}x\text{d}u_{1}\text{d}u_{2}
  &\leq\int_{[0,T]^{3}}G_{0,a}^{(q)}(b,a+b+c)\text{d}a\text{d}b\text{d}c.
\end{align*}
In order to bound the term $G_{0,a}^{(q)}(b,a+b+c)$ we proceed as follows. Applying the inequality $\mu(a,a+b+c,b)\leq(a+b+c)^{H}b^{H}$, as well as the condition $q\geq2$, we obtain 
\begin{align}\label{se1:11/02/15}
G_{0,a}^{(q)}(b,a+b+c)  
  &= (a+b+c)^{-5H}b^{-5H}\mu(a,a+b+c,b)^3\nonumber\\
	&\times\left(\frac{\mu(b,a+b+c,b)}{b^{H}(a+b+b)^{H}}\right)^{2(q-2)}\nonumber\\
	&\leq (a+b+c)^{-5H}b^{-5H}\mu(a,a+b+c,b)^3.
\end{align}
On the other hand, by the relation
\begin{align*}
\mu(a,a+b+c,b)
  &= \frac{1}{2}\left((a+b)^{2H}+(b+c)^{2H}-a^{2H}-c^{2H}\right)\nonumber\\
	&= 	Hb\int_{0}^{1}\left((a+bw)^{2H-1}+(c+bw)^{2H-1}\right)\text{d}w,
\end{align*}
we deduce that there exists a constant $K>0$  such that 
\begin{align}\label{se1:11/02/1asdads5}
\mu(a,a+b+c,b)\Indi{(0,a\wedge c)}(b)
  &\leq \Indi{(0,a\wedge c)}(b)Hb\int_{0}^{1}\left((a+bw)^{2H-1}+(c+bw)^{2H-1}\right)\text{d}w\nonumber\\
	&= 	  Kb (a\vee c)^{2H-1}.
\end{align}
Using \eqref{se1:11/02/15} and \eqref{se1:11/02/1asdads5} we get 
\begin{align}\label{ddse1:11/02/1asdads5}
G_{0,a}^{(q)}(b,a+b+c) \Indi{(0,a\wedge c)}(b)
  &\leq 	  Kb^{-5H+3}(a+b+c)^{-5H}(a\vee c)^{6H-3}\nonumber\\
	&\leq 	  Kb^{-5H+3}(a\vee c)^{H-3}.
\end{align}
From \eqref{ddse1:11/02/1asdads5} as well as the condition $H<\frac{3}{4}$, we deduce that $G_{0,a}^{(q)}(b,a+b+c)$ is integrable in $\{(a,b,c)\in[0,T]^{3}\ |\ b\leq a\wedge c\}$. In addition, using the relation $\mu(a,a+b+c,b)\leq(a+b+c)^{H}b^{H}$, we can prove that 
\begin{align*}
G_{0,a}^{(q)}(b,a+b+c)
  &\leq b^{-2H}c^{-2H}.
\end{align*}
Therefore, by the condition $H<\frac{3}{4}$, we deduce that $G_{0,a}^{(q)}(b,a+b+c)$ is integrable in $\{(a,b,c)\in[0,T]^{3}\ |\ a\leq b\wedge c\}$. Similarly, we can prove that 
\begin{align*}
G_{0,a}^{(q)}(b,a+b+c)
  &\leq b^{-2H}a^{-2H},
\end{align*}
and hence, since $H<\frac{3}{4}$ we conclude that $G_{0,a}^{(q)}(b,a+b+c)$ is integrable  in $\{(a,b,c)\in[0,T]^{3}\ |\ c\leq b\wedge a\}$. From the analysis we conclude that $G_{0,a}^{(q)}(b,a+b+c)$ is integrable in $[0,T]^{3}$.

To prove the integrability of $G_{0,x}^{(q)}(u_{1},u_{2})$ in $\widetilde{\Tc}_{3}$ we change the coordinates $(x,u_{1},u_{2})$ by  $(a:=u_{1},b:=x-u_{1},c:=u_{2})$ to get
\begin{align*}
\int_{\widetilde{\Tc}_{3}}G_{0,x}^{(q)}(u_{1},u_{2})\text{d}x\text{d}u_{1}\text{d}u_{2}
  \leq \int_{[0,T]^{3}}G_{0,a+b}^{(q)}(a,c)\text{d}a\text{d}b\text{d}c.
\end{align*}
In order to bound the term $G_{0,a+b}^{(q)}(a,c)$ we proceed as follows. From relation 
\begin{align}\label{eqqqq:1:05/091}
\mu(x+y,x,z)
  &=H(2H-1)xz\int_{[0,1]^{2}}(y+xv_{1}+zv_{2})^{2H-2}\text{d}v_{1}\text{d}v_{2},
\end{align}
we can deduce that
\begin{align*}
\mu(x+y,x,z)
  &\leq H(2H-1)xzy^{2H-2}.
\end{align*}
Hence, since 
\begin{align}\label{eq1:11/11/2015}
G_{0,a+b}^{(q)}(a,c)
  &=a^{-2H-2qH}c^{-2H-2qH}\mu(a+b,a,c)^{2q-1},
\end{align}
we deduce that there exists a constant $K>0$ only depending on $H$ such that 
\begin{align}\label{fkndsusdfgsdfgccdfdsds}
G_{0,a+b}^{(q)}(a,c)\Indi{(a\vee c,T)}(b)
  &\leq a^{-H-2qH+2q-1}c^{-H-2qH+2q-1}b^{2(2q-1)(H-1)}\Indi{(a\vee c,T)}(b).
\end{align}
Since $q\geq 2$, we have that $H<\frac{3}{4}<\frac{4}{5}\leq \frac{2q}{1+2q}$. As a consequence, from \eqref{fkndsusdfgsdfgccdfdsds} we deduce that $G_{0,a+b}^{(q)}(a,c)$ is integrable in $\{(a,b,c)\in\R_{+}^{3}\ |\ b\geq a,c\}$.
%From the previous inequality we deduce that $G_{0,a+b}^{(q)}(a,c)$ is integrable in $\{(a,b,c)\in[0,T]_{3}\ |\ a,c\leq b\}$. 
In addition, by \eqref{eqqqq:1:05/091} we get
\begin{align*}
\mu(x+y,x,z)
  &\leq H(2H-1)xz\int_{[0,1]^2}((x\vee z)w_{1})^{2H-2}\text{d}w_{1}\text{d}w_{2}\nonumber\\
	&=    H xz (x\vee z)^{2H-2} = H(x\wedge z) (x\vee z) ^{2H-1}.  
	\end{align*}
Therefore, there exists constant $K\geq0$ such that 
\begin{multline}\label{eq2bxx:13/10/2015}
G_{0,a+b}^{(q)}(a,c)\Indi{(a\wedge c,a\vee c)}(b)\\
\begin{aligned}
	&\leq K(a\wedge c)^{-H(2q+1)+2q-1}(a\vee c)^{-3H-2q+2qH+1}\Indi{(a\wedge c,a\vee c)}(b).
\end{aligned}
\end{multline}
From \eqref{eq2bxx:13/10/2015}, and $H<\frac{3}{4}<\frac{4}{5}\leq \frac{2q}{1+2q}$, it follows that $G_{0,a+b}^{(q)}(a,c)$ is integrable in $\{(a,b,c)\in[0,T]^{3}\ |\ a\wedge c\leq b\leq a\vee c\}$. Finally, by inequalities $\mu\leq a^{H}c^{H}$ and \eqref{eq1:11/11/2015}, we get 
\begin{align}\label{eq2bggffxx:13/10/2015}
G_{0,a+b}^{(q)}(a,c)\Indi{(0,a\wedge c)}(b)
  &\leq a^{-2H}c^{-2H}.
\end{align}
Using \eqref{eq2bggffxx:13/10/2015} as well as condition $H<\frac{3}{4}$, we deduce that $G_{0,a+b}^{(q)}(a,c)$ is integrable in $\{(a,b,c)\in[0,T]^{3}\ |\ b\leq a\wedge c\}$. From the previous analysis it follows that $G_{0,x}^{(q)}(u_{1},u_{2})$ is integrable in $\widetilde{\Tc}$ as required. 
\end{proof}
%=====================================================================================================

%=====================================================================================================

\end{document}